\numberwithin{equation}{section}
\newtheorem{Th}{Theorem}[section]
\newtheorem{Lem}[Th]{Lemma}
\newtheorem{Prop}[Th]{Proposition}
\newtheorem{Rem}[Th]{Remark}
\newtheorem{Def}[Th]{Definition}
\newtheorem{Cor}[Th]{Corollary}
\newcommand{\Rl}{\mathbb{R}}
\newcommand{\Z}{\mathbb{Z}}
\newcommand{\Rn}{\mathbb{R}^{n}}
\renewcommand{\d}{\partial}
\newcommand{\dd}{\,\mathrm{d}}
\newcommand{\wt}{\widetilde}
\newcommand{\norm}[1]{\left\Vert#1\right\Vert}
\newcommand{\jap}[1]{\langle #1 \rangle}
\DeclareMathOperator{\supp}{supp}
\newcommand{\eps}{\epsilon}
\title[Bilinear Sparse Domination]
{Bilinear Sparse Domination for Oscillatory Integral Operators}
\author{Tobias Mattsson}
\address{\newline
 Tobias Mattsson \\
 Department of  Mathematics, Uppsala University \\
 S-751 06 Uppsala, Sweden}
\email{tobias.mattsson@math.uu.se}
\date{}
\thanks{A grant from the K. and A. Wallenberg foundation supports the author.}
\keywords{fourier integral operator, oscillatory integral operator, sparse domination, weighted norm inequality}
\subjclass[2010]{Primary: 42B20, 42B25, 42B37}
\begin{document}

%\setcitestyle{square}
\maketitle

\begin{abstract}
   In this paper, we prove bilinear sparse domination bounds for a wide class of Fourier integral operators of general rank, as well as oscillatory integral operators associated to H\"ormander symbol classes $S^m_{\rho,\delta}$ for all $0\leq\rho\leq 1$ and $0\leq\delta< 1$, a notable example is the Schr\"odinger operator. As a consequence, one obtains weak $(1,1)$ estimates, vector-valued estimates, and a wide range of weighted norm inequalities for these classes of operators.
\end{abstract}

\section{Introduction}

In this paper we study the bilinear sparse domination of oscillatory integral operators of the form:
\begin{equation*}
    T_a^\varphi f(x) := \int_{\Rn} e^{i\varphi(x,\xi)}\,a(x,\xi)\,\widehat f (\xi) \dd\xi
\end{equation*}
where $a$ is an amplitude of H\"ormander type, and the phase function class will satisfy regularity conditions in $\xi$ such that the operator is either a Fourier integral operator (FIO), or an oscillatory integral operator (OIO). See below for the definition of these two classes (see section \ref{sec:fio} and \ref{sec:oio}).\\

Recently, a significant advancement in pointwise estimates has been made with the development of the theory of sparse domination. This theory introduces the concept of sparse domination for operators $T$ on function spaces, characterized by the following inequalities:
\begin{align*}
|Tf(x)| \lesssim {\Lambda}_{\mathcal{S}, q}f(x) \quad \text{and} \quad |\langle{Tf,g}\rangle| \lesssim {\Lambda}_{{\mathcal{S}}, q, p'}(f, g).
\end{align*}

We call the first inequality \textit{sparse domination} and the second one \textit{bilinear sparse domination}. See section \ref{sparse domination theory} for the definition of ${\Lambda}_{ \mathcal{S} , q}$ and ${\Lambda}_{{\mathcal{S}},q,p'}$.\\

The significance of proving sparse bounds extends well beyond their relationship to $L^p$ bounds. These bounds are useful for establishing weighted and vector-valued inequalities. This approach has been applied in many contexts, such as Bochner-Riesz multipliers \cite{LMRNC}, singular integrals \cite{LSS,grafakos}, various Hilbert transforms, multipliers and pseudodifferential operators \cite{BC,yamamoto}. Sparse bounds were also employed in the proof of the $A_2$ conjecture \cite{Lacey,Lerner_simple_2012}.\\

In \cite{BC}, D. Beltran and L. Cladek showed that a sharp (up to the endpoint) bilinear sparse domination estimate for classical pseudodifferential operators, with amplitudes in $S^m_{\rho,\delta}(\Rn)$ for $0<\delta\leq \rho< 1$, holds. Building upon this result, our paper delves into the investigation of sparse domination for Fourier integral operators and other oscillatory integral operators with nonlinear phase functions associated to partial differential equations, like the Schr\"odinger equation. In the former case we focus on phase functions belonging to the Dos Santos Ferreira-Staubach classes $\Phi^2$. While in the latter case, we investigate the sparse domination of oscillatory integral operators with phase functions in the class $\textart{F}^k$, which was initially introduced by A. J. Castro, A. Israelsson, W. Staubach, and M. Yerlanov \cite{CISY}.\\

The sparse bounds established in our study yield a range of consequential results, including weighted and vector-valued inequalities for FIOs and OIOs. While some of the weighted norm inequalities have previously been established in the case of Fourier (see D. Dos Santos Ferreira, and W. Staubach \cite{DS}) and oscillatory integral operators (see A. Bergfeldt, and W. Staubach \cite{BS}), our work unveils new weighted norm inequalities, and gives quantitative control of the weighted operator norm. These new results, along with the previously known ones, are discussed in detail in Section \ref{Sec:weightedConsequences}. We choose to only discuss our main results about sparse domination in the introduction, beginning with the Fourier integral operators. \\

    To this end, we define the following notation, set 
    \begin{equation}\label{decay_FIO}
        m_\rho(q,p):=-n(1-\rho)\Big|\frac{1}{q}-\frac{1}{p}\Big|,
    \end{equation}
    and
    $$\zeta:=\min\Big\{0,\frac{n}{2}(\rho-\delta)\Big\}.$$
    This latter number is the sharp regularity exponent in the $L^2$ boundedness of FIOs and also OIOs.
    
    \begin{Th}[Fourier Sparse Domination]\label{main_FIO}
        Assume that $a(x,\xi)\in S^{m}_{\rho,\delta}(\Rn)$ for $0\leq \rho\leq 1$ and $0\leq \delta< 1$, and $\varphi(x,\xi)$ is in the class $\Phi^2$ with rank $0\leq\kappa\leq n-1$ and is \emph{SND}. Then for any compactly supported bounded functions $f, g$ on $\mathbb{R}^n$, there exist sparse collections $\mathcal{S}$ and $\widetilde{\mathcal{S}}$ of dyadic cubes such that
        \begin{align*}
            \begin{cases}
            |\left<T_a^\varphi f, g\right>|\le C(m, q, p)\Lambda_{\mathcal{S}, q, p'}(f, g), \text{ and }\\
            |\left<T_a^\varphi f, g\right>|\le C(m, q, p)\Lambda_{\widetilde{\mathcal{S}}, p', q}(f, g),
            \end{cases}
        \end{align*}
        for all pairs $(q, p')$ and $(p', q)$ such that 
        \begin{align}\label{FIO_ranges}
            \begin{cases}
            m<-m_{\rho}(q,2)-\kappa\rho\Big(\frac{1}{p}-\frac{1}{2}\Big)-\zeta, & 1\le q\le p \le 2\\
            m<-m_{\rho}(q,p)-\zeta\big(\frac{2}{q}-1\big), & 1\le q\le 2 \le p \le q'.
            \end{cases}
        \end{align}
        (See figure \ref{figure numerology})
    \end{Th}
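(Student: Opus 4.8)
The plan is to reduce matters to a uniform family of "local" estimates and then run the standard stopping-time/sparse selection argument of Lerner–Nazarov type. First I would perform a Littlewood–Paley decomposition $a = \sum_{j\geq 0} a_j$ where $a_j$ is supported in $\langle\xi\rangle\sim 2^j$, giving $T_a^\varphi = \sum_j T_{a_j}^\varphi$. The piece $T_{a_0}^\varphi$ is harmless (it behaves like a nice integral operator with rapidly decaying kernel, using that $\varphi\in\Phi^2$ is smooth and the SND condition controls the stationary-phase behaviour), so the main work is with the high-frequency pieces. For these, the crucial input is the single-scale estimate: one needs, for each $j$, an $L^p\to L^p$ (or $L^q\to L^p$) bound for $T_{a_j}^\varphi$ together with kernel-support/off-diagonal decay information at spatial scale $2^{-j\rho}$ (the natural scale dictated by the symbol class and the $\Phi^2$ phase). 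The exponent bookkeeping in \eqref{FIO_ranges} — the appearance of $m_\rho(q,p)$, the rank term $\kappa\rho(1/p-1/2)$, and the $L^2$-loss $\zeta$ — should come precisely from summing these single-scale bounds in $j$, with the $\Phi^2$/rank-$\kappa$ structure entering through the known $L^p$ mapping properties of FIOs of rank $\kappa$ (as recorded in the cited work of Dos Santos Ferreira–Staubach and the $\Phi^2$ theory).

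Second, having the single-scale bounds, I would set up the sparse domination via a resolution-of-the-identity argument: fix a dyadic cube $Q$ and decompose $f = f\1_{3Q} + f\1_{(3Q)^c}$; the interior part is handled by the local $L^q\to L^p$ estimate on $T_{a_j}^\varphi$ restricted to spatial scale comparable to $\ell(Q)$, while the tail $f\1_{(3Q)^c}$ contributes only through the off-diagonal kernel decay, which I expect to be the Schwartz-type decay $|K_j(x,y)|\lesssim 2^{jn}(1+2^{j\rho}|x-y|)^{-N}$ away from the singular support (modified appropriately by the geometry of $\nabla_\xi\varphi$, i.e.\ along the canonical relation). Summing the tail over $j$ and over the dyadic scales above $\ell(Q)$ gives a term absorbable into $\Lambda_{\mathcal S,q,p'}$. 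One then iterates: the set where the relevant local maximal/square-function-type operator is large has small measure, producing the stopping cubes that form the sparse family $\mathcal S$; the second estimate with $(\widetilde{\mathcal S}, p', q)$ follows by the same argument applied to the adjoint $(T_a^\varphi)^*$, which is again an oscillatory integral operator of the same type (the $\Phi^2$ class and SND condition being stable under taking adjoints, up to the expected symmetry in the roles of $q$ and $p'$).

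The main obstacle I anticipate is establishing the sharp single-scale estimate with the correct dependence on the rank $\kappa$ and on $\rho$ simultaneously — in particular tracking how the factor $\kappa\rho(1/p-1/2)$ emerges. For $\rho=1$ this is the classical FIO theory, but for general $0\le\rho\le 1$ one must interpolate/combine the rank-$\kappa$ degenerate stationary phase estimates with the $S^m_{\rho,\delta}$ symbol decay, and the endpoint $\delta<1$ (rather than $\delta\le\rho$) forces one to be careful: the $x$-derivatives of $a_j$ cost $2^{j\delta}$, which must be beaten by the $2^{-jN(1-\rho)}$-type gains from integration by parts in $\xi$ combined with the curvature of $\varphi$. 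I would isolate this as a self-contained proposition (proved by decomposing the $\xi$-frequency annulus into $\sim 2^{j(1-\rho)}$ caps of size $2^{j\rho}$, applying stationary phase or a second dyadic decomposition on each cap, and using the SND hypothesis to control the Hessian of $\varphi$ from below on the relevant directions), after which the passage to sparse domination is the now-routine machinery.
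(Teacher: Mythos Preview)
Your outline diverges from the paper's argument in two essential places, and the second of these is a genuine gap rather than a stylistic choice.

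\textbf{Sparse-selection mechanism.} You propose a Lerner--Nazarov stopping-time iteration (split $f=f\1_{3Q}+f\1_{(3Q)^c}$, control the near part by a local $L^q\to L^p$ bound, the far part by kernel decay, and recurse on a small ``bad'' set). The paper does \emph{not} do this. It follows the Lacey--Spencer / Beltran--Cladek scheme: after the Littlewood--Paley and second-dyadic (rank-$\kappa$ cone) decompositions it introduces a further \emph{spatial} decomposition $T_j^\nu=\sum_l T_{j,l}^\nu$, where $l$ indexes the anisotropic distance from the canonical relation $\{y=\nabla_\xi\varphi(x,\xi_j^\nu)\}$ via the weight
\[
w_j^\nu(x,y)=(1+2^{2j}|(\nabla_\xi\varphi(x,\xi_j^\nu)-y)''|^2)(1+2^{j\rho}|(\nabla_\xi\varphi(x,\xi_j^\nu)-y)'|^2).
\]
It then proves geometrically decaying $L^q\to L^p$ bounds $\|T_{j,l}\|_{L^q\to L^p}\lesssim 2^{-C(j+l)}$ for $l>j\epsilon$ and a single-scale bound for the principal part $\sum_{l\le j\epsilon}T_{j,l}$, and feeds these into a lemma (the one-third trick with shifted dyadic grids) that rewrites $\langle T_{j,l}f,g\rangle$ directly as a sum of $|Q|\langle f\rangle_{q,Q}\langle g\rangle_{p',Q}$ over dyadic cubes of side $\sim 2^{-j\rho+l}$. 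No stopping time is run; sparsity is bought by the geometric decay in $(j,l)$.

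\textbf{The gap in your approach.} Your kernel model $|K_j(x,y)|\lesssim 2^{jn}(1+2^{j\rho}|x-y|)^{-N}$ is wrong for FIOs: the kernel concentrates near $y=\nabla_\xi\varphi(x,\xi)$, not near $x=y$, and the decay is anisotropic with different rates in the $\xi''$ and $\xi'$ directions dictated by the rank. Consequently the Lerner split $f=f\1_{3Q}+f\1_{(3Q)^c}$ fails at the first step: $T_j(f\1_{3Q})$ is \emph{not} essentially supported on $3Q$ but on the image of $3Q$ under the canonical map, so the ``interior'' term cannot be paired with $\langle g\rangle_{p',Q}$ and the ``tail'' is measured from the wrong centre. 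Your parenthetical ``modified appropriately by the geometry of $\nabla_\xi\varphi$'' is exactly the crux, and the paper's resolution is to encode that geometry into the pieces $T_{j,l}^\nu$ from the outset rather than to try to rescue the stopping-time recursion. Relatedly, your proposed frequency decomposition into ``$\sim 2^{j(1-\rho)}$ caps of size $2^{j\rho}$'' is the isotropic pseudodifferential decomposition; for a rank-$\kappa$ FIO the correct decomposition is into $O(2^{j\kappa/2})$ cones of angular aperture $2^{-j/2}$ along the $\kappa$-dimensional submanifold $S_\kappa(x)$, and it is this geometry (combined with the sharp $L^2$ bound and interpolation) that produces the $\kappa\rho(1/p-1/2)$ term in \eqref{FIO_ranges}.
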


     If the phase $\varphi$ is linear (or $\kappa=0$), the FIOs reduce to the pseudodifferential case, this case was investigated by Beltran and Cladek in \cite{BC}. Their sparse domination result suggests that the bilinear sparse domination estimates for the pseudodifferential operators could be improved in two ways. First, pseudodifferential operators are examples of Fourier integral operators with phase function of rank $0$, secondly the order and type of the amplitude i.e. $m,$ $\rho$ and $\delta$. The motivation for investigating Fourier integral operators with amplitude types different than $\rho=1$ and $\delta=0$, and ranks different than $n-1$, comes from the theory of partial differential equations, scattering theory, inverse problems, and tomography, just to name a few. In this paper, we have made an attempt to investigate and achieve optimal results in all three of these directions.\\

    We now turn to the oscillatory integral operators. Let $\varkappa=\min(\rho,1-k)$ and set 
    \begin{equation}\label{decay_OIO}
       m_\varkappa(q,p):=-n(1-\varkappa)\Big|\frac{1}{q}-\frac{1}{p}\Big|.
    \end{equation}
    
     We also extend the class of the phase function to include nonlinear phase functions of the type $x\cdot\xi+|\xi|^k$. This second class includes various oscillatory integral operators described in more detail below. 
    
    \begin{Th}[Oscillatory Sparse Domination]\label{main_OIO}
        Let $n\geq 1$, $0<k<\infty$. Assume that $\varphi\in \textart F^k$ {is \emph{SND}}, satisfies the \emph{LF}$(\mu)$-condition for some $0<\mu\leq 1$, and the $L^2$-condition \eqref{eq:L2 condition_old}. Assume also that $a(x,\xi)\in S^{m}_{\rho,\delta}(\Rn),$ for $0\leq \rho\leq 1$ and $0\leq \delta< 1$. Then for any compactly supported bounded functions $f, g$ on $\mathbb{R}^n$, there exist sparse collections $\mathcal{S}$ and $\widetilde{\mathcal{S}}$ of dyadic cubes such that
        \begin{align*}
            \begin{cases}
            |\left<T_a^\varphi f, g\right>|\le C(m, q, p)\Lambda_{\mathcal{S}, q, p'}(f, g), \text{ and }\\
            |\left<T_a^\varphi f, g\right>|\le C(m, q, p)\Lambda_{\widetilde{\mathcal{S}}, p', q}(f, g),
            \end{cases}
        \end{align*}
        for all pairs $(q, p')$ and $(p', q)$ such that 
        \begin{align}\label{OIO_ranges}
            \begin{cases}
            m<-m_\varkappa(q,2)-\zeta, & 1\le q\le p \le 2\\
            m<-m_\varkappa(q,p)-\zeta\big(\frac{2}{q}-1\big), & 1\le q\le 2 \le p \le q'.
            \end{cases}
        \end{align}
        (See figure \ref{figure numerology})
    \end{Th}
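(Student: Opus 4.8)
The plan is to reduce the oscillatory case to the Fourier integral operator case already treated in Theorem \ref{main_FIO}, exploiting the fact that the phase $x\cdot\xi+|\xi|^k$ behaves, after a dyadic Littlewood--Paley decomposition in frequency, like an FIO phase on each annulus $|\xi|\sim 2^j$. First I would perform a Littlewood--Paley decomposition $a(x,\xi)=\sum_{j\ge 0}a_j(x,\xi)$ with $a_j$ supported on $|\xi|\sim 2^j$, and correspondingly write $T_a^\varphi=\sum_j T_{a_j}^\varphi$. On the piece $j$, I rescale $\xi=2^j\eta$ so that the phase becomes $2^j(x\cdot\eta)+2^{jk}|\eta|^k$ with $|\eta|\sim 1$; the key observation (this is essentially the content of the \emph{LF}$(\mu)$-condition and the $L^2$-condition \eqref{eq:L2 condition_old} in the class $\textart F^k$) is that on $|\eta|\sim 1$ this phase, viewed in the original variables, satisfies the $\Phi^2$ estimates with an effective type $\varkappa=\min(\rho,1-k)$ rather than $\rho$: when $k<1$ the term $|\xi|^k$ contributes second derivatives of size $2^{j(k-2)}=2^{-j(2-k)}$, i.e.\ a gain consistent with type $1-k$, while when $k\ge 1$ the curvature of $|\xi|^k$ is harmless in comparison to the symbol estimates and $\varkappa=\rho$. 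This is where the substitution of $m_\rho$ by $m_\varkappa$ and the disappearance of the $\kappa\rho(\tfrac1p-\tfrac12)$ term (the rank is effectively $n-1$ but absorbed) come from.

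Next I would invoke the single-annulus estimates underlying the proof of Theorem \ref{main_FIO}: for each dyadic frequency block, one has $L^p\to L^p$ (and, more importantly for sparse domination, local $L^q\to L^{p'}$ testing-type) bounds with an explicit gain $2^{j\sigma}$ in terms of $j$, where $\sigma$ incorporates the order $m$, the decay $m_\varkappa(q,p)$ coming from the stationary-phase/Fourier-restriction estimate on the annulus, and the $L^2$-regularity exponent $\zeta$. The ranges \eqref{OIO_ranges} are precisely the conditions under which $\sum_j 2^{j\sigma}<\infty$, so that summing the single-block sparse forms (each of which dominates $T_{a_j}^\varphi$ against a sparse family $\mathcal S_j$ depending on $j$) yields a convergent bound. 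The passage from the fixed-scale operator bounds to a sparse form is then carried out by the standard Lerner--Nazarov stopping-time/Calderón--Zygmund machinery: one runs a stopping argument on the dyadic cubes relative to the $L^q$-averages of $f$ and $L^{p'}$-averages of $g$, using the fixed-scale bounds together with the rapid off-diagonal decay of the kernels of $T_{a_j}^\varphi$ (which follow from non-stationary phase once $|\xi|\sim 2^j$ is fixed) to control the ``error'' terms at each stopping step. Because the argument is symmetric in the roles of $f$ and $g$ up to duality, one obtains both displayed inequalities, the second with the exponents $(p',q)$ interchanged and the sparse family $\widetilde{\mathcal S}$.

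The main obstacle, I expect, will be establishing the correct single-annulus (fixed frequency scale $2^j$) estimate with the sharp exponent $\varkappa=\min(\rho,1-k)$ uniformly over the phase class $\textart F^k$ --- in particular verifying that the \emph{SND} hypothesis, the \emph{LF}$(\mu)$-condition, and \eqref{eq:L2 condition_old} together are exactly what is needed to run the stationary-phase analysis on the rescaled phase and to get the off-diagonal kernel decay with constants independent of $j$. A secondary technical point is the case $k\ge 1$ (so $1-k\le 0$ and $\varkappa=\rho$ forces $\rho>0$, or one must interpret the borderline carefully) versus $0<k<1$: the two regimes require slightly different bookkeeping in the rescaling, and one must check that the claimed ranges \eqref{OIO_ranges} are consistent at $k=1$. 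Once the single-scale estimate is in hand with the right exponent, the summation over $j$ and the sparse-domination bootstrap are routine adaptations of the FIO argument, so the novelty and the difficulty are concentrated entirely in the phase-class analysis on a single dyadic annulus.
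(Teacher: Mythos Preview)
Your proposed reduction to Theorem \ref{main_FIO} via rescaling does not work, and this is a genuine gap rather than a presentational issue. Phases in $\textart F^k$ are not homogeneous of degree~$1$ in $\xi$ (the model phase $x\cdot\xi+|\xi|^k$ has degree~$k$ in the nonlinear part), so after your rescaling $\xi=2^j\eta$ the resulting phase is not in $\Phi^2$, and Theorem \ref{main_FIO} simply does not apply on a fixed annulus. The paper does \emph{not} attempt any such reduction: it builds a separate second dyadic decomposition adapted to $\textart F^k$, covering the shell $|\xi|\sim 2^j$ by balls $C_j^\nu$ of radius $2^{j(1-k)}$ (not by cones as in the FIO case), and proves a dedicated kernel estimate (Lemma \ref{kernel lemma}) for the resulting pieces $K_{j,l}^\nu$. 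The parameter $\varkappa=\min(\rho,1-k)$ enters not as an ``effective type'' of a $\Phi^2$ phase, but through the competition between the symbol decay $2^{-j\rho}$ and the size $2^{j(1-k)}$ of these balls; in particular your claim that $\varkappa=\rho$ when $k\geq 1$ is incorrect (for $k>1$ one has $1-k<0\leq\rho$, so $\varkappa=1-k$), and this misreading would give the wrong numerology in \eqref{OIO_ranges}.

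The sparse-domination step is also different from what you describe. The paper does not run a Lerner--Nazarov stopping-time argument. Instead it introduces an additional \emph{spatial} decomposition parameter $l$, writing $T_j=\sum_l\sum_\nu T_{j,l}^\nu$, and proves two types of $L^q\to L^p$ estimates (Lemma \ref{lplq_oio}): geometrically decaying bounds $\|T_{j,l}\|_{q\to p}\lesssim 2^{-C(j+l)}$ for $l>j\epsilon$, and a bound $\lesssim 2^{j(m+\cdots)}$ for the principal term $\sum_{l\leq j\epsilon}T_{j,l}$, the latter using the sharp global $L^2$ result of \cite{IMS2}. These estimates are then fed into Proposition \ref{lifting result} (itself a repackaging of the Beltran--Cladek/Lacey--Spencer machinery, Lemma \ref{sparse domination lift_lemma}), which directly produces the sparse form; there is no stopping-time iteration. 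Finally, the $\mathrm{LF}(\mu)$-condition is used only to dispose of the low-frequency portion $T_\chi$ separately via Theorem \ref{low-freq-sparse}; you should isolate this step rather than folding it into the annulus analysis.
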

   
    The motivation behind investigating the OIOs in this paper stems from the theory of partial differential equations, specifically in the study of dispersive equations. These dispersive equations often involve phase functions $\varphi(x, \xi) = x \cdot \xi + \phi(\xi)$, where $\xi$ represents the wave variable and $x$ denotes the spatial variable. Different choices of $\phi(\xi)$ lead to various important equations. For instance, when $\phi(\xi) = |\xi|^{1/2}$, it corresponds to the water-wave equation. When $\phi(\xi) = |\xi|^{2}$, it relates to the Schr\"odinger equation. Furthermore, in dimension one, $\phi(\xi) = |\xi|^{3}$ and $\phi(\xi) = \xi|\xi|$ correspond to the Airy and Benjamin-Ono equations, respectively.\\

    This paper, and the method of obtaining sparse form bounds, is inspired by the work by Beltran and Cladek \cite{BC}, M. T. Lacey and S. Spencer \cite{LSS}, and M. T. Lacey, D. Mena, and M. C. Reguera \cite{LMRNC}. The essential ingredients of this method are geometrically decaying \textit{$L^p$-improving estimates} (i.e. $L^q\to L^p$ estimates for $p>q$) on spatially and frequency localised pieces of the operator, as well as optimal $L^p$ estimates on frequency localized pieces.\\

    For $m<-n(1-\rho)(1/q-1/2)$ in the range $1\leq q\leq p\leq 2$, and $m<-n(1-\rho)(1/q-1/p)$ whenever $1\leq q\leq 2\leq p\leq q'$, Beltran and Cladek obtained sharp up to the endpoint bilinear sparse domination estimates for the pseudodifferential operators with symbols in the H\"ormander classes $S^m_{\rho,\delta}$ for $0<\delta\leq\rho<1$. In the corresponding ranges of $p,q$ we obtain \eqref{FIO_ranges} for the Fourier integral operator with amplitudes in the H\"ormander classes $S^m_{\rho,\delta}$ for $0\leq\delta<1$ and $0\leq\rho\leq 1$. Thus, since Theorem \ref{main_FIO} reduces to the sharp sparse domination result obtained for the pseudodifferential case in \cite{BC} whenever $\rho\geq \delta$ and $\kappa=0$, our result is also sharp in that range. \\

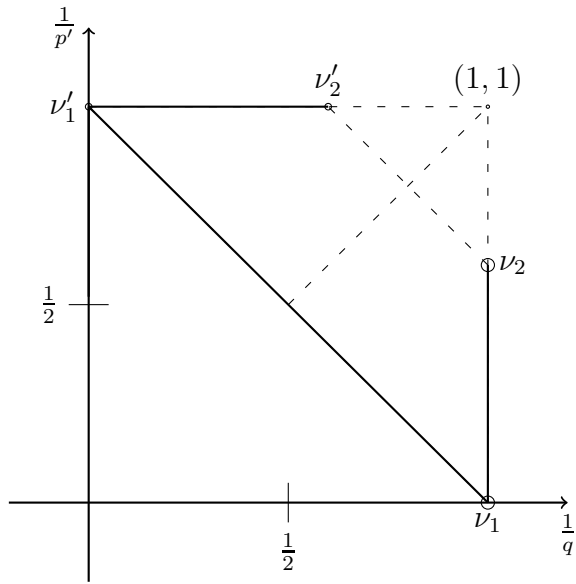
\begin{figure}
\begin{tikzpicture}[scale=3.5] 
\begin{scope}[scale=1.5]
\draw[thick,->] (-.2,0) -- (1.2,0) node[below] {$\frac{1}{q}$};
\draw[thick,->] (0,-.2) -- (0,1.2) node[left] {$\frac{1}{p'}$};
\draw (.5,.05) -- (.5,-.05) node[below] {$\frac{1}{2}$};
\draw (.05,.5) -- (-.05,.5) node[left] {$\frac{1}{2}$};
\draw (1,0) circle (.04em) node[below] {$\nu_1$}; 
\draw (0,1) circle (.02em) node[left] {$\nu_1'$};
\draw[loosely dashed] (0,1) -- (0.5,0.5)  -- (1.,0); 
\draw[loosely dashed]  (0.5,0.5)  -- (1.,1); 
\draw[thick] (0,1) -- (0.6,1);
\draw[loosely dashed] (1,0.6) -- (.6,1); 
\draw[loosely dashed] (0,1) -- (1.,1.)  -- (1.,0); 
\draw[thick] (0,1) -- (0.5,0.5)  -- (1.,0); 
\draw[thick] (0,1) -- (0,0.52);
\draw[thick] (1,0) -- (1,.6);
\draw (1.,.6) circle (.04em) node[right] {$ \nu_2$};
\draw (.6,1) circle (.02em) node[above] {$ \nu_2'$};
\draw (1,1) circle (.01em) node[above] {$(1,1)$};
\end{scope}
\end{tikzpicture}

\caption{The trapezoid $\textbf{T}$ of Theorem \ref{main_OIO}. The thick lines are contained in the diagram while the dashed lines are not.}
\label{sparse region}
\label{figure numerology}
\end{figure}

We shall briefly describe the trapezoid associated with the admissible $(1/q,1/p')$-points corresponding to the case of OIOs for $m<0$. A similar description works to describe the numerology in Theorem \ref{main_FIO}. Let $\textbf{T}$ be the trapezoid with vertices,
\begin{align*}
\textbf{T}:
\begin{cases}
    \nu_1=\big(1,0\big),& \nu_2=\big(1,\frac{-m-\zeta}{n(1-\varkappa)}\big),\\
   \nu_1'=\big(0,1\big), & \nu_2'=\big(\frac{-m-\zeta}{n(1-\varkappa)},1\big).
    \end{cases}
\end{align*}
where $-n(1-\varkappa)\leq m+\zeta\leq-\frac{n(1-\varkappa)}{2}$. While if $-\frac{n(1-\varkappa)}{2}-\zeta<m<0$ then we obtain the trapezoid
\begin{align*}
\textbf{T}':
\begin{cases}
    \mu_1=\big(1/2-\frac{m+\zeta}{n(1-\varkappa)},1/2+\frac{m+\zeta}{n(1-\varkappa)}\big),& \mu_2=\big(1/2-\frac{m+\zeta}{n(1-\varkappa)},1/2\big),\\
   \mu_1'=\big(1/2+\frac{m+\zeta}{n(1-\varkappa)},1/2-\frac{m+\zeta}{n(1-\varkappa)}\big), & \mu_2'=\big(1/2,1/2-\frac{m+\zeta}{n(1-\varkappa)}\big).
    \end{cases}
\end{align*}

See figure \ref{figure numerology} for a visualization of the trapezoid $\textbf{T}$. One may obtain similar trapezoids for the Fourier integral operators, e.g. just substitute $\rho$ for $\varkappa$ in $\textbf{T}$ to to obtain the corresponding trapezoid for the FIOs.\\

\subsection*{Organization and notation}

In  the first section of the paper (section \ref{Prelim}), we introduce the necessary fundamental concepts from microlocal analysis, weighted $A_p$-theory, and sparse domination theory. We also give the definitions of the various classes of amplitudes and phase functions regarding oscillatory integral operators and Fourier integral operators. \\

In section \ref{Sec:weightedConsequences} we derive all the corollaries of Theorem \ref{main_FIO} and Theorem \ref{main_OIO}, these include weighted norm inequalities, weak $(1,1)$ estimates, and vector-valued inequalities. We also discuss how these estimates are new in relation to previously established results.\\

In section \ref{sec:decomp} we provide the various decompositions that we employ in each case, along with various tools that we need in order to obtain the geometrically decaying $L^q$-improving estimates in section \ref{sec:proof_oio}.  This section is concerned with the proof of $L^q$-improving estimates of oscillatory integral operators. This process is divided in two main steps.  The first step is to use the various tools developed in the first section to obtain $L^q$-improving estimates which decay geometrically. In the second step we use the optimal global $L^2$-boundedness of OIOs shown in \cite{IMS2}, along with the geometric decay of the first step to obtain $L^2$ and estimate for the principal term of the decomposition. We do a similar procedure to obtain $L^1$ and $L^1\to L^\infty$ cases for the localized pieces and the principal term. By an interpolation procedure we then obtain the desired $L^q$-improving estimates.\\

In section \ref{sec:proof_fio} we turn to the proof of the $L^q$-improving estimates pertaining to FIOs. We employ a similar strategy to the one in section \ref{sec:proof_oio}, the resulting analysis differs in the details more so than the overall strategy. \\

Finally in section \ref{sec:proof} we give the proof of the main results.\\

As for notation, in the subsequent analysis we will adopt the customary practice of denoting positive constants in inequalities by the symbol $C$. The specific value of $C$ is not crucial to the current problem but can be determined based on known parameters, such as $n$, $p$, and $q$ which are relevant in the given context. For instance, these parameters may be associated with the seminorms of various amplitudes or phase functions. While the value of $C$ may vary from line to line, it can be estimated if required. In this paper, we refer to $C$ as the "hidden constant".\\

Additionally, we adopt the notation $c_1 \lesssim c_2$ as a shorthand for $c_1 \leq Cc_2$. Moreover, we use the notation $c_1 \sim c_2$ when both $c_1 \lesssim c_2$ and $c_2 \lesssim c_1$. 

\subsection*{Acknowledgements.}
The author is supported by the Knut and Alice Wallenberg Foundation. The author is also grateful to Andreas Str\"ombergsson for his support and encouragement.

\section{Preliminaries}\label{Prelim}

We start by recalling the definition of the Littlewood-Paley partition of unity, which is the most basic tool in the frequency decomposition of the operators at hand.\\

Let $\psi_0 \in \mathcal{C}_c^\infty(\mathbb{R}^n)$ be equal to $1$ on $B(0,1)$ and have its support in $B(0,2)$. Then let
\begin{equation*}
\psi_j(\xi) := \psi_0 \left (2^{-j}\xi \right )-\psi_0 \left (2^{-(j-1)}\xi \right ),
\end{equation*}
where $j\geq 1$ is an integer and $\psi(\xi) := \psi_1(\xi)$. Then $\psi_j(\xi) = \psi\left (2^{-(j-1)}\xi \right )$, and one has the following Littlewood-Paley partition of unity
\begin{equation*}
    \sum_{j=0}^\infty \psi_j(\xi) = 1,
\end{equation*}
for all $\xi\in\Rl^n$. We also define the fattened Littlewood-Paley operators, set
\begin{equation*}
\Psi_j := \psi_{j+1}+\psi_j+\psi_{j-1},
\end{equation*}
with $\psi_{-1}:=\psi_{0}$. Define the Littlewood-Paley operators by
\begin{equation*}
\psi_j(D), f(x) = \int_{\mathbb{R}^n} \psi_j(\xi)\widehat{f}(\xi)e^{ix\cdot\xi} \mathrm{d}\xi.
\end{equation*}

In connection to the $L^q\to L^p$ estimates for FIOs and OIOs below, we will encounter the well-known $L^r$-maximal function
 \begin{equation*}\label{HLmax}
  \mathcal{M}_r f(x):=\sup _{B\ni x}\Big(\frac{1}{|B|} \int_{B}|f(y)|^r \dd y\Big)^{1/r},   
 \end{equation*} 
 where the supremum is taken over all balls $B$ containing $x$. It is a classical result of Hardy and Littlewood that the $L^r$-maximal function $\mathcal{M}_rf(x)$ is bounded on $L^p$ for $p>r$.\\

    The class of amplitudes considered in this paper was first introduced by L. H\"ormander in \cite{Hor1}.
    \begin{Def}\label{symbol class Sm}
        Let $m\in \Rl$ and $0\leq \rho, \delta \leq 1$. An amplitude $a(x,\xi)$ in the class $S^m_{\rho,\delta}(\Rn)$ is a function $a\in \mathcal{C}^\infty (\Rn\times \Rn)$ for which
        \begin{equation*}
        \left|\partial_{\xi}^\alpha \partial_{x}^\beta a(x,\xi) \right| \lesssim \langle\xi\rangle ^{m-\rho|\alpha|+\delta|\beta|},
        \end{equation*}
        for all multi-indices $\alpha$ and $\beta$ and $(x,\xi)\in \Rn\times \Rn$, where 
        $\langle\xi\rangle:= (1+|\xi|^2)^{1/2}.$
    \end{Def}
    
    In our subsequent discussion, we will refer to $m$ as the order of the amplitude, while using $\rho$ and $\delta$ to denote its type. Specifically, we will refer to the class $S_{\rho,\delta}^m(\mathbb{R}^n)$, where $0<\rho\leq 1$ and $0\leq \delta<1$, as the classical class. Similarly, the class $S_{0,\delta}^m(\mathbb{R}^n)$ with $0\leq \delta<1$ will be referred to as the exotic class, and we will denote the class $S_{\rho,1}^m(\mathbb{R}^n)$, where $0\leq \rho\leq 1$, as the forbidden class of amplitudes. 

\subsection{Tools from weighted function spaces}

 In deriving the weighted consequences of Theorem \ref{main_FIO} and Theorem \ref{main_OIO}, we will make use of the classical $A_p$ space and the reverse H\"older space $RH_p$. We recall the definitions of these below for the convenience of the reader.

 \begin{Def}
    Let $1<p< \infty$. A locally integrable and a.e. positive function $w$ is called a weight. A weight is called a Muckenhoupt $A_p$ weight if 
    \begin{equation*}
        [w]_{A_p}=\sup_{Q\text{ cube}} \jap{w}_{Q,1}\jap{w^{1-p'}}_{Q,1}^{p-1}<\infty,
    \end{equation*}
    while a weight is called a reverse H\"older $RH_p$ weight if
    \begin{equation*}
        [w]_{RH_p}=\sup_{Q\text{ cube}} \jap{w}_{Q,1}^{-1}\jap{w}_{Q,p}<\infty.
    \end{equation*}
\end{Def}

The intersection of the $A_s$ and the reverse H\"older classes can be characterized by the equivalence:
\begin{equation}\label{equivalenceApRH}
w \in A_{s/q} \cap RH_{(p/s)'} \Longleftrightarrow w^{(p/s)'} \in A_{(p/s)'(s/q-1 )+1}.
\end{equation}

\begin{Def}\label{def:TLspace}
	Let $s \in {\Rl}$ and $1\leq p \leq\infty$, $1\leq q \leq\infty$ and $w\in A_p$. The weighted Triebel-Lizorkin space is defined by
	\[
	F^s_{p,q}(w)
	:=
	\Big\{
	f \in {\mathscr{S}'}(\Rl^n) \,:\,
	\|f\|_{F^s_{p,q}(w)}
	:=
	\|(2^{jqs}\psi_j(D) f)\|_{L^p_w(\ell^q)}<\infty
	\Big\},
	\]
where $\mathscr{S}'(\Rl^n)$ denotes the space of tempered distributions.	
\end{Def}
 
Later, in the discussion on vector-valued inequalities we shall need the following classical lemma due to J. L. Rubio de Francia \cite{RUbio}.

\begin{Def}\label{linearizable}
    An operator $T$ defined in $L^p$ is called linearizable if for any $f_0\in L^p$, there exists a linear operator $\mathcal{L}$  such that
    \begin{align*}
        &|Tf_0|=|\mathcal{L}f_0|; &|\mathcal{L}f|\leq|Tf|,\quad\forall f\in L^p.
    \end{align*}
\end{Def}

\begin{Th}\label{Rubio}
    Let $T_j$ be a sequence of linearizable operators and suppose that for some fixed $r\geq 1$, and all weights $w\in A_r$ one has $\|T_j f\|_{L^r_w}\lesssim \|f\|_{L^r_w}$. Then for $1 < p < \infty$ and $w\in A_p$ one has $$\|T_j f_j\|_{L^p_w(\ell^r)}\lesssim_w \|f_j\|_{L^p_w(\ell^r)}$$
\end{Th}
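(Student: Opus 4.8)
The plan is to reduce the statement to the (scalar) extrapolation theorem of Rubio de Francia for pairs of non-negative functions, whose hypothesis we verify at the single exponent $p_0=r$. First I would reduce to finitely many operators: the case of an infinite sequence follows by monotone convergence once the bound is known with a constant independent of the number of terms. Using the linearizability hypothesis, I would then replace each $T_j$ by the linear operator $\mathcal{L}_j$ that linearizes it at the fixed datum $f_j$; since $|T_jf_j|=|\mathcal{L}_jf_j|$ while $\|\mathcal{L}_jf\|_{L^r_w}\le\|T_jf\|_{L^r_w}$ for all $f$ and all weights, this preserves the hypothesis (with, as always, the implicit constant depending only on $[w]_{A_r}$), so henceforth the $T_j$ may be taken linear.

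The observation that makes the extrapolation hypothesis immediate is that, for any weight $v\in A_r$, Tonelli's theorem and the scalar bound give
\[
\Big\|\big(\textstyle\sum_j|T_jf_j|^r\big)^{1/r}\Big\|_{L^r_v}^{\,r}
=\sum_j\|T_jf_j\|_{L^r_v}^{\,r}
\le C([v]_{A_r})^{r}\sum_j\|f_j\|_{L^r_v}^{\,r}
=C([v]_{A_r})^{r}\Big\|\big(\textstyle\sum_j|f_j|^r\big)^{1/r}\Big\|_{L^r_v}^{\,r}.
\]
Thus the family of pairs $(F,G):=\big((\sum_j|f_j|^r)^{1/r},\,(\sum_j|T_jf_j|^r)^{1/r}\big)$ satisfies $\|G\|_{L^r_v}\le C([v]_{A_r})\|F\|_{L^r_v}$ for all $v\in A_r$. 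Applying Rubio de Francia extrapolation to this family upgrades the estimate from the single exponent $r$ to $\|G\|_{L^p_w}\le C([w]_{A_p})\|F\|_{L^p_w}$ for every $1<p<\infty$ and every $w\in A_p$, which is exactly the claimed bound $\|T_jf_j\|_{L^p_w(\ell^r)}\lesssim_w\|f_j\|_{L^p_w(\ell^r)}$.

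For completeness I would also indicate the internal structure of the extrapolation step. The case $p=r$ is the displayed computation. The case $p<r$ reduces to the case $p>r$ by duality in $\ell^r$ and in $L^p_w$: this turns the bound into one of type $p'>r'$ for the adjoints $T_j^{*}$, which are bounded on $L^{r'}_v$ for every $v\in A_{r'}$ by taking adjoints in the hypothesis (here the linearity of the $T_j$ is genuinely used). Finally, the case $p>r$ is handled by $L^{p/r}_w$--$L^{(p/r)'}_w$ duality: writing $\|G\|_{L^p_w}^{r}=\int G^{r}h\,w$ for a non-negative $h$ of unit norm, one replaces $h$ by a Rubio de Francia envelope obtained by iterating the Hardy--Littlewood maximal operator in a suitable weighted Lebesgue space (so that $h$ is dominated by an $A_1$-type function of comparable norm), combines it with $w$ into an auxiliary weight in the relevant Muckenhoupt class, and closes the estimate by the scalar hypothesis together with Hölder's inequality. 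The main obstacle is precisely this last construction: arranging the Rubio de Francia iteration so that the auxiliary weight lands in $A_r$ — the smaller class, since $r<p$ — with a constant controlled quantitatively by $[w]_{A_p}$; the choice of the weighted space in which the maximal operator is iterated, and the bookkeeping of the resulting $A_r$-characteristic, is the delicate point, everything else being routine duality, Hölder, and Tonelli.
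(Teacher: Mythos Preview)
Your proof is correct and follows the standard route to this classical result. The paper does not actually prove this theorem: it is stated as a ``classical lemma due to J.~L.~Rubio de Francia'' and cited to \cite{RUbio} without argument, so there is no paper proof to compare against. Your reduction --- Tonelli at the exponent $r$ to get the $L^r_v(\ell^r)$ bound for every $v\in A_r$, followed by Rubio de Francia extrapolation for pairs of non-negative functions --- is exactly how this is done in the modern literature, and your sketch of the extrapolation machinery (duality to reduce $p<r$ to $p>r$, then the Rubio de Francia algorithm to build an auxiliary $A_r$ weight) is accurate. Two small remarks: first, once you invoke extrapolation in its pair formulation, the linearization step is not actually needed, since the pair $(F,G)$ already encodes the nonlinearity and extrapolation makes no linearity assumption; linearizability only matters if you insist on dualizing the operators themselves. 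Second, you are tacitly assuming that the implicit constant in the hypothesis depends only on $[w]_{A_r}$, which the paper's statement does not make explicit but is the standard reading and is required for the extrapolation to go through.
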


\subsection{Background on Sparse Forms}\label{sparse domination theory}

Now, let us delve into some facts concerning sparse bounds.

    \begin{Def}
        Let $0<\eta <1$. A collection $\mathcal{S}$ of cubes in $\Rn$ is called an $\eta$-sparse family if there are pairwise disjoint  subsets  ${\{ E_Q\}}_{Q \in \mathcal{S} }$ such that
        \begin{enumerate}
            \item $ E_{Q} \subset Q $
            \item and $|E_{Q}|>\eta |Q|$.
        \end{enumerate}
    \end{Def} 

    If there is no confusion, we usually say \textit{sparse} instead of \textit{$\eta$-sparse}. By a cube $Q$ in $\Rn$ we mean a half open cube such as $$Q=[x_1,x_1+l(Q))\times\cdots[x_n,x_n+l(Q)),$$ where $l(Q)$ is the side length of the cube and $x(Q)=(x_1,\dots,x_n)$ is a corner of the cube. Define a dyadic cube to be a cube $Q$ with $l(Q)=2^k$ for some $k\in\Z$. Let $v$ be vector in $\{0,1,2\}^n$ and define the shifted dyadic grid $\mathcal{D}_v$ as the set 
    $$\{Q:l(Q)=2^k,x(Q)=\overline{m},\forall k\in\Z,\,\overline{m}\in\Z^n\}.$$
    
    \begin{Def}
        For any cube $Q$ and $1\leq p <\infty $, we define $ {\langle f \rangle }_{p,Q} := {|Q|}^{-\frac{1}{p}} {||f||}_{ L^p(Q)} $. Let $\mathcal{S}$ be an $\eta$-sparse family and $1\leq  q <\infty$, then the $(q,s)$-sparse form operator ${\Lambda}_{\mathcal{S} , q,p}$ and $q$-sparse operator $ {\Lambda}_{ \mathcal{S} ,q}$ are defined by
        \begin{eqnarray*}
         {\Lambda}_{ \mathcal{S} ,q}f(x) :=\sum_{Q \in \mathcal{S}} {\langle f \rangle }_{q,Q} 1_Q (x) \ \ , \ \ 
         {\Lambda}_{\mathcal{S} , q,p}(f,g) := \sum_{Q \in \mathcal{S}} |Q| {\langle f \rangle }_{q,Q}{\langle g \rangle }_{p,Q}  
        \end{eqnarray*}
        for all $ f,g \in L^{1}_{loc} $.
    \end{Def}

If $q<s<p$, we have 
    \begin{align*}
        {\Lambda}_{\mathcal{S} , q,p'}(f,g) \lesssim {||f||}_{s} {||g||}_{s'}.
    \end{align*}
This inequality is easily derived from the $L^s$-boundedness of the $L^r$-maximal operator $M_r$. In section \ref{sec:proof} we shall also make use of the following useful fact.
\begin{Lem}\label{universal sparse form}
    Let $1\leq p,q\leq\infty$. For bounded, compactly supported functions $f,g$, there exists a sparse form ${\Lambda}_{\mathcal{S}_0 , q,p'}$ such that
    $${\Lambda}_{\mathcal{S} , q,p'}(f,g)\lesssim {\Lambda}_{\mathcal{S}_0 , q,p'}(f,g).$$
    (the hidden constant can be taken to only depend on the dimension $n$)
\end{Lem}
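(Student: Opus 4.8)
The plan is to reduce an arbitrary sparse form $\Lambda_{\mathcal{S},q,p'}(f,g)$ to one built on a single universal sparse family $\mathcal{S}_0$ that depends only on $n$ (and on the bounded, compactly supported functions $f,g$, whose supports fix the relevant scale). The key point is that any $\eta$-sparse family $\mathcal{S}$ can be split into boundedly many subfamilies, each contained in one of the $3^n$ shifted dyadic grids $\mathcal{D}_v$, and that on each such grid the averages $\langle f\rangle_{q,Q}$ and $\langle g\rangle_{p',Q}$ can be compared to the averages over a controlled enlargement of $Q$ inside a \emph{single} sparse skeleton.

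First I would invoke the standard three-grid lemma: there exist $3^n$ shifted dyadic grids $\mathcal{D}_v$, $v\in\{0,1,2\}^n$, such that every cube $Q\subset\mathbb{R}^n$ is contained in some $Q'\in\mathcal{D}_v$ with $\ell(Q')\le 6\,\ell(Q)$ (hence $|Q'|\lesssim_n|Q|$). Decomposing $\mathcal{S}=\bigcup_v \mathcal{S}_v$ where $\mathcal{S}_v$ collects the cubes of $\mathcal{S}$ whose chosen dyadic parent lies in $\mathcal{D}_v$, it suffices to dominate $\Lambda_{\mathcal{S}_v,q,p'}(f,g)$ for each of the finitely many $v$. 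For a fixed $v$, the map $Q\mapsto Q'$ sends $\mathcal{S}_v$ into $\mathcal{D}_v$; since $\langle f\rangle_{q,Q}\le (|Q'|/|Q|)^{1/q}\langle f\rangle_{q,Q'}\lesssim_n \langle f\rangle_{q,Q'}$ and similarly for $g$, and since $|Q|\le|Q'|$, we get $\Lambda_{\mathcal{S}_v,q,p'}(f,g)\lesssim_n \sum_{Q'\in\mathcal{S}_v'}\, N(Q')\,|Q'|\langle f\rangle_{q,Q'}\langle g\rangle_{p',Q'}$, where $\mathcal{S}_v'$ is the image family and $N(Q')$ counts preimages. By sparseness of $\mathcal{S}$ the multiplicity $N(Q')$ is bounded by a dimensional constant (the preimages of a fixed $Q'$ have comparable size and essentially disjoint pieces $E_Q$ inside $Q'$), so this reduces everything to a sparse form over a subfamily of one dyadic grid $\mathcal{D}_v$.

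Next, I would replace the dyadic subfamily $\mathcal{S}_v'\subset\mathcal{D}_v$ by the \emph{maximal} dyadic cubes relevant to $f,g$: since $f$ and $g$ are bounded and compactly supported, only cubes meeting $\mathrm{supp}\,f\cup\mathrm{supp}\,g$ contribute, and the averages are nonincreasing-in-some-controlled-sense as one passes to ancestors. Concretely, fix once and for all the family $\mathcal{S}_0$ to be (a sparse refinement of) all dyadic cubes in $\mathcal{D}_0$ of side length $2^k$, $k\le k_0$, that meet a fixed large ball containing the supports; this is an honest $\eta$-sparse family for a dimensional $\eta$ — indeed one can take the maximal cubes of a Calderón–Zygmund-type stopping-time construction, or simply note that the collection of all dyadic subcubes of a fixed cube $Q_0$ with a geometric decay in scale is sparse. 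Then each $Q'\in\mathcal{S}_v'$ is comparable to, and contained in, a bounded number of cubes of $\mathcal{S}_0$, and a final multiplicity/geometric-series argument (using $\sum_{k\ge 0}2^{-k\epsilon}<\infty$ to absorb the nested scales) yields $\Lambda_{\mathcal{S}_v',q,p'}(f,g)\lesssim_n \Lambda_{\mathcal{S}_0,q,p'}(f,g)$. Chaining the three reductions gives the claim with a hidden constant depending only on $n$.

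The main obstacle is bookkeeping rather than any deep idea: one must be careful that the enlargement step and the passage to $\mathcal{S}_0$ do not destroy the pairwise-disjointness of the sets $E_Q$ that certifies sparseness, and that the multiplicities $N(Q')$ and the overlap in the final step are genuinely bounded by dimensional constants independent of $f,g$ and of the original $\mathcal{S}$. This is handled by always comparing sizes ($|Q'|\lesssim_n|Q|$) and by exploiting that the $E_Q$'s for preimages of a fixed cube live disjointly inside a cube of comparable volume, forcing $N(Q')\lesssim_n \eta^{-1}$; the geometric decay across scales then makes the nested sum over $\mathcal{S}_0$ converge uniformly.
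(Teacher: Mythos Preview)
The paper does not supply a proof of this lemma; it is quoted as a known ``useful fact'' from the sparse domination literature (see e.g.\ work of Lacey--Mena--Reguera and Lacey--Moen). So there is nothing to compare against directly, and your proposal has to be judged on its own merits.

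Your first two reductions (the $3^n$-grid trick and the multiplicity bound for $Q\mapsto Q'$) are standard and essentially fine. The genuine gap is in your third step, the construction of $\mathcal{S}_0$. You propose to take $\mathcal{S}_0$ to be ``(a sparse refinement of) all dyadic cubes in $\mathcal{D}_0$ of side length $2^k$, $k\le k_0$, that meet a fixed large ball'', and then assert this is $\eta$-sparse. It is not: the full collection of dyadic subcubes of a cube $Q_0$ is never sparse, since each point lies in one cube per generation and the required disjoint sets $E_Q$ with $|E_Q|>\eta|Q|$ cannot exist. Your fallback (``the collection of all dyadic subcubes \dots\ with a geometric decay in scale is sparse'') is either false or too vague to be an argument, and the final ``geometric-series'' step invokes a decay $2^{-k\epsilon}$ that has no source --- the averages $\langle f\rangle_{q,Q}$ carry no a priori decay across scales. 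Crucially, a universal $\mathcal{S}_0$ cannot depend only on the supports of $f,g$; it must depend on the functions themselves.

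The standard route that actually closes the argument is the maximal-function characterisation. For any $\eta$-sparse $\mathcal{S}$ one has, using $|Q|\le \eta^{-1}|E_Q|$ and $\langle f\rangle_{q,Q}\le \mathcal M_q f(x)$ for $x\in E_Q\subset Q$,
\[
\Lambda_{\mathcal{S},q,p'}(f,g)\ \le\ \eta^{-1}\sum_{Q\in\mathcal{S}}\int_{E_Q}\mathcal M_q f\cdot \mathcal M_{p'}g\ \le\ \eta^{-1}\int_{\mathbb R^n}\mathcal M_q f\cdot \mathcal M_{p'}g.
\]
Conversely, a Calder\'on--Zygmund stopping-time selection inside a fixed dyadic grid (stopping on cubes where $\langle f\rangle_{q,Q}\,\langle g\rangle_{p',Q}$ exceeds a large multiple of the parent's value, started from a cube containing the supports) produces a sparse family $\mathcal{S}_0=\mathcal{S}_0(f,g)$ with $\int \mathcal M_q f\cdot \mathcal M_{p'}g\lesssim_n \Lambda_{\mathcal{S}_0,q,p'}(f,g)$. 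Chaining the two inequalities gives the lemma with a constant depending only on $n$. This is exactly the ``Calder\'on--Zygmund-type stopping-time construction'' you allude to, but you must actually carry it out --- it is where the content of the lemma lives, not a side remark.
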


Obtaining $(1,1)$-sparse bounds is the best scenario, while the quite trivial $(p,p')$-sparse bounds are the weakest. \\

The following result due to J. M. Conde-Alonso, A. Culiuc, F. Di Plinio and Y. Ou \cite{domin}, shows that sparse form bounds are stronger than weak $(1,1)$ estimates.

\begin{Lem}\label{Weaksparse}
Suppose that a sublinear operator $T$ has the following property: there exists $C > 0$
and $1 \leq  q < \infty$ such that for every $f, g$ bounded with compact support there exists a sparse
collection $\mathcal{S}$ such that
$$
| \left< T f, g \right>| \leq C \Lambda_{\mathcal{S},1,q}(f,g).
$$
Then $T : L^1(\Rn) \to L^{1,\infty}(\Rn)$.
\end{Lem}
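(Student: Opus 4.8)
The plan is to derive the weak $(1,1)$ bound from the sparse form bound by a duality/linearization argument combined with a stopping-time decomposition of the sparse form. First I would fix $f \in L^1(\Rn)$ bounded with compact support (the general case follows by density and the usual truncation, since $L^{1,\infty}$ bounds are stable under such approximations) and a level $\lambda > 0$. The goal is to show $|\{x : |Tf(x)| > \lambda\}| \lesssim \lambda^{-1}\|f\|_{L^1}$. By a Calder\'on--Zygmund decomposition of $f$ at height $\lambda$ one reduces matters; alternatively, and more in the spirit of the cited result \cite{domin}, one argues directly: it suffices to bound $|\langle Tf, g\rangle|$ for $g = \mathbbm{1}_E$ where $E \subset \{|Tf| > \lambda\}$ is an arbitrary measurable set of finite measure (after testing against the unimodular function $\mathrm{sgn}(\overline{Tf})$, which is harmless since $Tf$ is locally integrable for such $f$), and to show $|\langle Tf, g\rangle| \lesssim \|f\|_{L^1} + \lambda|E|$. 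Taking the supremum over such $E$ and optimizing then yields the weak-type estimate.

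Next I would invoke the hypothesis: for this pair $(f,g)$ there is a sparse family $\mathcal{S}$ with $|\langle Tf, g\rangle| \le C\Lambda_{\mathcal{S},1,q}(f,g) = C\sum_{Q\in\mathcal{S}} |Q|\,\langle f\rangle_{1,Q}\langle g\rangle_{q,Q}$. The point is to split $\mathcal{S}$ according to the size of the averages $\langle f\rangle_{1,Q}$ relative to $\lambda$. Let $\mathcal{S}_1 = \{Q \in \mathcal{S} : \langle f\rangle_{1,Q} \le \lambda\}$ and $\mathcal{S}_2 = \mathcal{S}\setminus\mathcal{S}_1$. On $\mathcal{S}_1$ one uses $\langle g\rangle_{q,Q} = \langle \mathbbm{1}_E\rangle_{q,Q} = (|E\cap Q|/|Q|)^{1/q} \le |E\cap Q|/|Q|$ only when convenient; more robustly, bound $\sum_{Q\in\mathcal{S}_1}|Q|\langle f\rangle_{1,Q}\langle g\rangle_{q,Q} \le \lambda \sum_{Q\in\mathcal{S}_1}|Q|\langle g\rangle_{q,Q}$, and then use the sparseness (the disjoint major subsets $E_Q$) together with the fact that $\Lambda_{\mathcal{S},q,q'}(\mathbbm{1}_E,\mathbbm{1}_E)$-type sums are controlled by $\|\mathbbm{1}_E\|_q\|\mathbbm{1}_E\|_{q'} \sim |E|$ — concretely, $\sum_Q |Q|\langle g\rangle_{q,Q} \lesssim \sum_Q |E_Q| \langle M(\mathbbm{1}_E)\cdots\rangle$, i.e. one dominates by $\int_{\Rn} M_q(\mathbbm{1}_E) \lesssim |E|$ via the sparseness and $L^{q'}$-boundedness of $M_q$ (or simply note $q$-averages of $\mathbbm{1}_E$ sum, against disjoint $E_Q \subset Q$, to at most $\int (M\mathbbm{1}_E)^{1/q}\cdot\mathbbm{1}_{\{M\mathbbm1_E>0\}}$, which on a set of finite measure is $\lesssim |E|$; even cruder, $\langle \mathbbm{1}_E\rangle_{q,Q}\le 1$ so the sum over $Q$ meeting $E$ with $|E_Q|$-disjointness gives $\lesssim |E|/\eta$). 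Either way $\mathcal{S}_1$ contributes $\lesssim \lambda |E|$.

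For $\mathcal{S}_2$, the cubes with $\langle f\rangle_{1,Q} > \lambda$, one uses $\langle g\rangle_{q,Q} \le 1$ and the sparseness: since the $E_Q$ are disjoint with $|E_Q| > \eta|Q|$, one has $\sum_{Q\in\mathcal{S}_2}|Q|\langle f\rangle_{1,Q} \le \eta^{-1}\sum_{Q\in\mathcal{S}_2}|E_Q|\langle f\rangle_{1,Q} \le \eta^{-1}\sum_{Q\in\mathcal{S}_2} |E_Q| \, \inf_{E_Q} M f$ — no, more carefully, each $Q\in\mathcal{S}_2$ is contained in $\{Mf > \lambda\}$ (as $\langle f\rangle_{1,Q}>\lambda$ forces every point of $Q$ to have maximal average $>\lambda$), hence $\sum_{Q\in\mathcal{S}_2}|Q| \le \eta^{-1}\sum|E_Q| \le \eta^{-1}|\{Mf>\lambda\}| \lesssim \lambda^{-1}\|f\|_{L^1}$ by the weak $(1,1)$ bound for the Hardy--Littlewood maximal operator; and since $\langle f\rangle_{1,Q} \le$ the maximal average which we do not control pointwise, we instead bound $\sum_{Q\in\mathcal{S}_2}|Q|\langle f\rangle_{1,Q}$ by $\int_{\{Mf>\lambda\}} Mf$ up to a constant (again via sparseness and nesting), and this integral is $\lesssim \|f\|_{L^1} + \lambda|\{Mf>\lambda\}| \lesssim \|f\|_{L^1}$ by the standard distributional estimate for $\int_{\{Mf>\lambda\}}Mf$. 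Combining, $|\langle Tf, g\rangle| \lesssim \|f\|_{L^1} + \lambda|E|$, which after dividing by $\lambda$ and taking the supremum over admissible $E$ gives $\lambda\,|\{|Tf|>\lambda\}| \lesssim \|f\|_{L^1}$, as desired.

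The main obstacle is the bookkeeping in the $\mathcal{S}_2$ term: one must legitimately pass from the sum $\sum_{Q\in\mathcal{S}_2}|Q|\langle f\rangle_{1,Q}$ to an $L^1$-type quantity, and the cleanest route is to observe that the sparseness lets one compare this sum to $\|Mf\|_{L^1(\{Mf>\lambda\})}$ (or to use the Carleson embedding / sparse-operator $L^1\to L^{1,\infty}$ bound directly), then invoke the elementary but essential truncated maximal-function estimate $\int_{\{Mf>\lambda\}}Mf \lesssim \|f\|_{L^1}$. A secondary technical point is ensuring $\langle Tf,g\rangle$ is well-defined and that the reduction to testing against indicator sets of finite measure is valid — here one uses that $f$ bounded with compact support makes $Tf \in L^2 \subset L^1_{\loc}$ (the $L^2$-hypotheses on the operator classes guarantee $L^2$-boundedness), so the pairing makes sense and the standard density argument upgrades the resulting a priori weak-type inequality to all of $L^1$.
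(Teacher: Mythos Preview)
The paper does not prove this lemma; it is quoted from \cite{domin} without proof, so there is no ``paper's argument'' to compare against. I therefore assess your proposal on its own merits.

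Your overall strategy---test against $g=\mathrm{sgn}(\overline{Tf})\mathbbm{1}_E$, invoke the sparse bound, and split the sparse collection by the size of $\langle f\rangle_{1,Q}$---is in the right spirit, but both halves of your split contain genuine gaps.

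For $\mathcal{S}_1$ you need $\sum_{Q\in\mathcal{S}_1}|Q|\langle \mathbbm{1}_E\rangle_{q,Q}\lesssim |E|$, and this is \emph{false} for $q>1$: take $E=[0,1]\subset\Rl$ and the sparse chain $\{[0,2^k]:0\le k\le N\}$ (with $E_{[0,2^k]}=[2^{k-1},2^k]$); then the sum is $\sum_k 2^{k(1-1/q)}\to\infty$. Your fallback ``$\langle\mathbbm{1}_E\rangle_{q,Q}\le 1$ and sum the $|E_Q|$'' does not help either, since the disjoint $E_Q$ are subsets of the (possibly huge) cubes $Q$, not of $E$. For $\mathcal{S}_2$ you appeal to $\int_{\{Mf>\lambda\}}Mf\lesssim\|f\|_1$, which is also false: with $f=N\mathbbm{1}_{[0,1/N]}$ one has $\|f\|_1=1$ but $\int_{\{Mf>\lambda\}}Mf\gtrsim\log N$. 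Finally, even if both pieces gave $|\langle Tf,g\rangle|\lesssim\|f\|_1+C\lambda|E|$, this does not yield the weak-type bound unless you arrange the constant in front of $\lambda|E|$ to be strictly less than $1$; your ``optimizing'' step is not explained.

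The correct argument (as in \cite{domin}) removes the exceptional set first: given $G$ of finite measure set $G'=G\setminus\{Mf>A\}$ with $A\sim\|f\|_1/|G|$, so that every contributing cube satisfies $\langle f\rangle_{1,Q}\le A$. One then shows $\sum_Q|Q|\langle f\rangle_{1,Q}\langle\mathbbm{1}_{G'}\rangle_{q,Q}\lesssim\|f\|_1$ by a \emph{further} dyadic layering in $\langle f\rangle_{1,Q}\sim 2^{-l}A$: the cubes in layer $l$ are contained in $\Omega_l:=\{Mf>2^{-l-1}A\}$ with $|\Omega_l|\lesssim 2^l|G|$, and Kolmogorov's inequality applied to $(M\mathbbm{1}_{G'})^{1/q}\in L^{q,\infty}$ gives
\[
\sum_{Q\in\text{layer }l}|Q|\langle\mathbbm{1}_{G'}\rangle_{q,Q}\;\lesssim\;\int_{\Omega_l}(M\mathbbm{1}_{G'})^{1/q}\;\lesssim\;|\Omega_l|^{1/q'}|G|^{1/q}\;\lesssim\;2^{l/q'}|G|,
\]
so layer $l$ contributes $\lesssim 2^{-l}A\cdot 2^{l/q'}|G|=2^{-l/q}\|f\|_1$, which sums in $l$. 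This dyadic refinement together with Kolmogorov is precisely the missing ingredient in your two-piece split.
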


\subsection{Basic definitions and results related to the Fourier integral operators}\label{sec:fio}\quad\\

The phase functions that we shall use to define Fourier integral operators were first introduced in \cite{DS} by D. Dos Santos Ferreira and W. Staubach.

\begin{Def}\label{def:phi2}
A \textit{phase function} $\varphi(x,\xi)$ in the class $\Phi^k$, $k\in\Z_{>0}$, is a function \linebreak$\varphi(x,\xi)\in \mathcal{C}^{\infty}(\Rl^n \times\Rl^n \setminus\{0\})$, positively homogeneous of degree one in the frequency variable $\xi$ satisfying the following estimate

\begin{equation}\label{C_alpha}
	\sup_{(x,\,\xi) \in \Rl^n \times\Rl^n \setminus\{0\}}  |\xi| ^{-1+\vert \alpha\vert}\left | \partial_{\xi}^{\alpha}\partial_{x}^{\beta}\varphi(x,\xi)\right |
	\leq C_{\alpha , \beta},
	\end{equation}
	for any pair of multi-indices $\alpha$ and $\beta$, satisfying $|\alpha|+|\beta|\geq k.$
\end{Def}

\begin{Def}\label{nondeg phase}
One says that the phase function $\varphi(x,\xi)$ satisfies the strong non-degeneracy condition \emph{(}or $\varphi$ is $\mathrm{SND}$ for short\emph{)} if
\begin{equation}\label{eq:SND}
	\big |\det (\partial^{2}_{x_{j}\xi_{k}}\varphi(x,\xi)) \big |
	\geq \delta,\qquad \mbox{for  some $\delta>0$ and all $(x,\xi)\in \mathbb{R}^{n} \times \Rn\setminus\{0\}$}.
\end{equation}
\end{Def}

Now, having the definitions of the amplitudes and the phase functions at hand, we define the FIOs as follows.

\begin{Def}\label{def:FIO}
	A Fourier integral operator $T_a^\varphi$ with amplitude $a$ and phase function $\varphi$, is an operator defined \emph{(}once again a-priori on $\mathscr{S}(\Rl^n)$\emph{)} by
	\begin{equation}\label{eq:FIO}
	T_a^\varphi f(x) := \int_{\Rn} e^{i\varphi(x,\xi)}\,a(x,\xi)\,\widehat f (\xi) \dd\xi,
	\end{equation}
	where $\varphi$ is a member of $\Phi^k$ for some $k\geq 1$. We say that $\varphi$ is of rank $\kappa$ if it satisfies $\mathrm{rank}\,\partial^{2}_{\xi\xi} \varphi(x, \xi) = \kappa$ for all $(x,\xi) \in \Rn \times \Rn\setminus \{0\}$.
\end{Def}

We will refer to a Fourier integral operator with amplitude in $S^m_{\rho,\delta}$ and phase function in $\Phi^2$ as a \textit{Fourier integral operator} (or FIO for short). Taking $\varphi(x,\xi)=x\cdot\xi$, one obtains the class of pseudodifferential operators associated to H\"ormanders symbol classes.

\subsection{Basic facts related to oscillatory integral operators}\label{sec:oio}

In this section, we aim to provide a concise overview of fundamental concepts about oscillatory integral operators that will serve as the foundation for the subsequent sections of our paper. We begin by introducing the definition of an oscillatory integral operator and exploring its essential properties. \\

When dealing with oscillatory integral operators, it is crucial to focus on the phase functions, as they play a central role in their classification. We consider nonlinear phase functions that go beyond the Fourier integral operators, in particular we are interested in phase functions of the following type, originally defined in \cite{CISY}:
\begin{Def}\label{def:fk}
For $0<k<\infty$, we say that a real-valued \textit{phase function} $\varphi(x,\xi)$ belongs to the class $\textart F^k$, if
$\varphi(x,\xi)\in \mathcal{C}^{\infty}(\Rn \times\Rn \setminus\{0\})$ and satisfies the following estimates $($depending on the range of $k)$\emph:
\begin{itemize}
    \item for $k \geq 1$, 
        $$ 
|\partial^\alpha_\xi  (\varphi (x,\xi)-x\cdot\xi) |\leq
c_{\alpha} |\xi|^{k-1}, \quad  |\alpha| \geq 1 ,
$$
    \item for $0<k<1$, 
$$ 
|\partial^\alpha_\xi \partial^{\beta}_x  (\varphi (x,\xi)-x\cdot\xi) |\leq
c_{\alpha,\beta} |\xi|^{k-|\alpha|}, \quad |\alpha + \beta | \geq 1 ,
$$
\end{itemize}
for all $x\in \Rn$ and $|\xi|\geq 1$.
\end{Def}

A widely recognized and representative example of a phase in the space $\textart F^k$ is given by $|\xi|^k + x \cdot \xi$. This particular type of phase is closely associated with the operator $e^{i (\Delta)^{k/2}}$. Having the definitions of the amplitudes and the phase functions at hand, one has the following definition.
 
\begin{Def}\label{def:OIO}
    An oscillatory integral operator $T_a^\varphi$ with amplitude $a$ in $ S^{m}_{\rho, \delta}(\mathbb{R}^n)$ and a real-valued phase function $\varphi$ is defined, initially on $\mathscr{S}(\mathbb{R}^n)$, as follows:
    \begin{equation}\label{eq:OIO}
    T_a^\varphi f(x) := \int_{\mathbb{R}^n} e^{i\varphi(x,\xi)} a(x,\xi) \widehat{f}(\xi) \dd\xi,
    \end{equation}
    where $\varphi$ is a member of $\textart{F}^k$ for some $k\geq 1$. We call $k$ the order of the oscillatory integral operator.
\end{Def}

In order to ensure the global $L^2$-boundedness of our operators, an additional condition must be imposed on the phase, which we will henceforth refer to as the $L^2$-condition for the sake of simplicity. This condition plays a crucial role in controlling the $L^2$ behavior of the oscillatory integral operators under consideration.

\begin{Def}\label{L2 conditions}
One says that the phase function $\varphi(x,\xi)\in \mathcal{C}^{\infty}(\Rn \times\Rn)$ satisfies 
the $L^2$-condition if 
\begin{equation}\label{eq:L2 condition_old}
            |\partial_{x}^{\alpha} \partial_{\xi}^{\beta} \varphi(x,\xi )| \leq C_{\alpha},
\end{equation}
for $|\alpha|\geq 1$, $|\beta|\geq 1,$ all $x \in\mathbb{R}^{n} $ and $|\xi|\geq 1$.
\end{Def}

The following $\mathrm{LF}(\mu)$-condition is an inherent requirement that, from the perspective of applications to PDEs, is typically satisfied and does not introduce any loss of generality. This condition is essential for analyzing the low-frequency components of the operators.

\begin{Def}\label{Def:LFmu}
Let $\varphi(x,\xi)\in \mathcal{C}^{\infty}(\mathbb{R}^n \times \mathbb{R}^n \setminus {0})$ be a real-valued function, and consider $0<\mu\leq 1$. We define the low-frequency phase condition of order $\mu$, abbreviated as the $\mathrm{LF}(\mu)$-condition, as follows: We say that $\varphi$ satisfies the $\mathrm{LF}(\mu)$-condition if it satisfies the inequality
\begin{equation}\label{eq:LFmu}
|\partial^{\alpha}_{\xi}\partial_{x}^{\beta} (\varphi(x,\xi)-x\cdot \xi) |\leq c_{\alpha} |\xi|^{\mu-|\alpha|},
\end{equation}
for all $x\in \mathbb{R}^n$, $0<|\xi| \leq 2$, and all multi-indices $\alpha$ and $\beta$.
\end{Def}

\section{Consequences of the main results}\label{Sec:weightedConsequences}

    Sparse domination offers a notable advantage over classical $L^p$ estimates by yielding weighted norm inequalities. In the subsequent subsections, we will delve into the discussion of several of these weighted estimates, as well as implications to weak $(1,1)$ estimates and vector-valued inequalities. 

\subsection{Weighted norm inequalities}\label{sec_weight}

In this section we derive bounds for $T_a^\varphi$ of the form
\begin{equation}\label{weighted_boundedness}
    \|T_a^\varphi f\|_{L^p_w}\leq C\|f\|_{L^p_w}
\end{equation}
with $w\in A_q$, for some $1<q,p<\infty$, and where 
\begin{equation*}
    \|f\|_{L^p_w}:=\Big(\int_{\Rl^n} |f(x)|^p\,w(x)\dd x\Big)^{1/p}.
\end{equation*}
For weighted norm inequalities for FIOs the reader is referred to the work D. Dos Santos Ferreira and W. Staubach \cite{DS}. For weighted norm inequalities for oscillatory integral operators that go beyond FIOs the reader is referred to A. Bergfeldt, S. Rodriguez Lopez, and W. Staubach \cite{BS}. Note that in these papers the authors consider weighted norm inequalities for weights that belong to all $A_p$-classes, for $1<p<\infty$ (see equation \eqref{weighted_boundedness}), i.e. $q=p$ in equation \eqref{weighted_boundedness}. In both cases they disregard the quantitative control on $C$.\\

Our corollary recovers all the previous known weighted estimates of Beltran and Cladek in \cite{BC}. In the case of FIOs our result extends the weighted norm estimates in \cite{DS} and \cite{BS} to phase functions with arbitrary rank $0\leq \kappa\leq n-1$ (see item (2) below). In both the cases of FIOs and OIOs we obtain control of $C$, moreover we obtain new weighted norm inequalities for weights in the more general class $ A_{q/q_0} \cap RH_{(p_0/q)'}$, i.e. we extend to the case when $p$ and $q$ are not necessarily equal.\\

The following corollary derives from \cite[Proposition 6.4]{Bernicot1} (due to F. Bernicot, D. Frey and S. Petermichl) and Theorem \ref{main_FIO} in the case of FIOs and Theorem \ref{main_OIO} in the case of OIOs.

\begin{Cor}\label{weightedConsequencesSmooth} 
Let $a\in S^{m}_{\rho,\delta}$ be an amplitude with $m\leq 0$ and $0\leq\rho\leq 1$, $0\leq\delta< 1$. Let the phase functions $\varphi$ be SND and satisfy either 
    \begin{enumerate}
        \item $\varphi\in \Phi^2$, or
        \item $\varphi\in \textart{F}^k$, obeys the $L^2$-condition and is $LF(\mu)$ for some $0< \mu\leq  1$.
    \end{enumerate}
If (1) holds, take $m$ and all pairs $(q_0, p_0')$, $(p_0', q_0)$ such that \eqref{FIO_ranges} is satisfied.
While if (2) holds,  take $(q_0, p_0')$, $(p_0', q_0)$ and $m$ such that \eqref{OIO_ranges} is satisfied. For $w \in A_{q/q_0} \cap RH_{(p_0/q)'}$ and all $q_0<p<p_0$, there exists a constant $c_{p}$
\begin{equation*}
    \|T_a^\varphi f\|_{L^p_w\to L^p_w}\leq c_p\Big([w]_{A_{q/q_0}}[w]_{RH_{(p_0/q)'}}\Big)^{\alpha}
\end{equation*}
where
\begin{equation*}
    \alpha=\max\Big\{\frac{1}{p-q_0},\frac{p_0-1}{p_0-p}\Big\}.
\end{equation*}
\end{Cor}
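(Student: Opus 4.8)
\textbf{Proof proposal for Corollary \ref{weightedConsequencesSmooth}.}

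The plan is to reduce the weighted estimate to an off-diagonal sparse bound and then invoke the abstract weighted-norm machinery of \cite[Proposition 6.4]{Bernicot1}. First I would recall the precise statement of that proposition: if a (sub)linear operator $T$ satisfies, for every pair of bounded compactly supported $f,g$, a bound of the form $|\langle Tf,g\rangle|\lesssim \Lambda_{\mathcal{S},q_0,p_0'}(f,g)$ for some fixed $1\le q_0<p_0\le\infty$ and some sparse family $\mathcal{S}$, then for all $q_0<p<p_0$ and all weights $w\in A_{p/q_0}\cap RH_{(p_0/p)'}$ one has $\|T\|_{L^p_w\to L^p_w}\lesssim \big([w]_{A_{p/q_0}}[w]_{RH_{(p_0/p)'}}\big)^{\alpha}$ with the displayed exponent $\alpha=\max\{1/(p-q_0),(p_0-1)/(p_0-p)\}$. (One must check the indexing: the hypothesis of Corollary \ref{weightedConsequencesSmooth} is stated with $w\in A_{q/q_0}\cap RH_{(p_0/q)'}$ and range $q_0<p<p_0$, so I would align notation so that the running integrability exponent in Bernicot--Frey--Petermichl's proposition is the $p$ appearing here, with their $(q_0,p_0)$ being our $(q_0,p_0)$; the weight class is then exactly $A_{p/q_0}\cap RH_{(p_0/p)'}$ and I would flag the apparent typo $q\leftrightarrow p$ in the statement.)

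The second and essentially only substantive step is to produce, for $T_a^\varphi$, a sparse form bound of precisely the type demanded by the abstract proposition. This is immediate from the main theorems once one fixes an admissible exponent pair. Concretely: in case (1) choose $(q_0,p_0')$ — equivalently $(q_0,p_0)$ with $q_0\le p_0'$, i.e. the pair lying in the trapezoid $\mathbf{T}$ of Figure \ref{figure numerology} — satisfying \eqref{FIO_ranges} for the given $m,\rho,\delta,\kappa$; Theorem \ref{main_FIO} then yields a sparse collection $\mathcal{S}$ with $|\langle T_a^\varphi f,g\rangle|\le C(m,q_0,p_0)\,\Lambda_{\mathcal{S},q_0,p_0'}(f,g)$. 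In case (2) one does the same with \eqref{OIO_ranges} and Theorem \ref{main_OIO}, using the hypotheses that $\varphi\in\textart F^k$ is SND, satisfies the $L^2$-condition \eqref{eq:L2 condition_old} and is $\mathrm{LF}(\mu)$. In either case the operator $T_a^\varphi$ is linear, hence sublinear, so the hypothesis of \cite[Proposition 6.4]{Bernicot1} holds with the chosen $q_0,p_0$. Feeding this into the proposition produces the claimed weighted bound for all $q_0<p<p_0$ and $w$ in the stated class, with the constant $c_p$ absorbing $C(m,q_0,p_0)$ and the sparseness constant $\eta$ (which by Lemma \ref{universal sparse form} may be taken to depend only on $n$).

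A few bookkeeping points I would address. First, Theorems \ref{main_FIO} and \ref{main_OIO} actually deliver two sparse bounds, a $(q_0,p_0')$ one and a symmetric $(p_0',q_0)$ one; only the first is needed here, the second being the natural companion used to obtain the dual range of weights (or it can be ignored). Second, \cite[Proposition 6.4]{Bernicot1} is typically stated for a single fixed exponent pair, while here we want the conclusion uniformly over $q_0<p<p_0$ with $(q_0,p_0)$ fixed; this is exactly the content of the proposition — the sparse bound at the endpoints $(q_0,p_0)$ self-improves to the whole open range — so no extra interpolation is required, though one should note that $C(m,q_0,p_0)$ is fixed once $(q_0,p_0)$ is chosen and the dependence of $c_p$ on $p$ enters only through the proposition's own constants. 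Third, one should remark that the hypothesis $m\le 0$ is what guarantees that admissible pairs $(q_0,p_0)$ with $q_0<p_0$ actually exist in the regions \eqref{FIO_ranges}, \eqref{OIO_ranges} (the trapezoids $\mathbf{T}$, $\mathbf{T}'$ are nonempty precisely in this regime).

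\textbf{Main obstacle.} There is no serious analytic obstacle: all the hard work is in Theorems \ref{main_FIO} and \ref{main_OIO}, which we are entitled to assume. The only real care needed is the correct matching of indices between the weight class $A_{q/q_0}\cap RH_{(p_0/q)'}$, the range $q_0<p<p_0$, and the hypotheses of the Bernicot--Frey--Petermichl proposition — i.e. making sure the exponent playing the role of the "Lebesgue exponent" in their statement is the one called $p$ here and that $RH_{(p_0/p)'}$ (resp.\ $RH_{(p_0/q)'}$) is the exponent actually appearing — together with correctly reading off the exponent $\alpha$. This is purely a translation-of-notation issue, and I would resolve it by writing out the proposition verbatim and then substituting.
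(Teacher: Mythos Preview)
Your proposal is correct and follows exactly the approach the paper takes: the corollary is stated as an immediate consequence of the sparse bounds in Theorems \ref{main_FIO} and \ref{main_OIO} combined with \cite[Proposition 6.4]{Bernicot1}, and the paper gives no further argument. Your care in matching the indices (and flagging the apparent $q\leftrightarrow p$ discrepancy in the weight class) is appropriate and goes slightly beyond what the paper itself spells out.
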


We define the class dependent number $D$ as follows. Let $$D=D(n,\varkappa)=n(1-\varkappa)$$ if $T_a^\phi$ is an OIO, as in Theorem \ref{main_OIO}, and let $$D=D(n,\rho)=n(1-\rho)$$ if $T_a^\phi$ is an FIO, as in Theorem \ref{main_FIO}. 
\begin{Rem}\label{weighted_remark}
    Observe that using the \textit{open property} of $A_p$ weights (see \cite{hytonen} or \cite{Stein}) and that $A_p$ classes increase in $p$, one can show the following two cases of corollary \ref{weightedConsequencesSmooth} for OIOs. For more details on proving this we refer the reader to \cite{BC}.
    \begin{enumerate}
        \item Let $-D-\zeta<m\leq -D/2-\zeta$. Then $T_a^\varphi$ is bounded on $L^p_w$ for $w \in A_{-\frac{p(m+\zeta)}{D}}$ and all $-\frac{D}{m+\zeta}<p<\infty$.  
        \item Let $m=-D/2-\zeta$. Then $T_a^\varphi$ is bounded on $L^p_w$ for $w \in A_{p/2}$ and all $2<p<\infty$.
    \end{enumerate}
    These results only constitute some of the weighted estimates implied by corollary \ref{weightedConsequencesSmooth}. 
\end{Rem}

\subsection{Weak-type estimates}

For weighted norm inequalities for local FIOs the reader is referred to the work by T. Tao \cite{Weak_tao}. Note that in that paper the author consider FIOs with amplitudes in $S^m_{1,0}$ ($m=-(n-1)/2$) and compact support in the spatial variable $x$, as well as phase functions which are smooth away from $|\xi|=0$, satisfy the SND condition, and are homogeneous of degree $1$. Our corollary extends this result to global FIOs, with phase functions that are in $\Phi^2$ and amplitudes in the $S^m_{\rho,\delta}$ for all $0\leq\rho\leq 1$ and $0\leq\delta<1$ up to the end point. In particular when the phase has full rank, we obtain $m<-(n-1)/2$ for $a\in S^m_{1,0}$, recovering the result in \cite{Weak_tao} up to the end point.\\

For global oscillatory integral operators our corollary is entirely new, in particular the numerology in corollary \ref{weak_oio} agrees with the $L^p$ estimates obtained in \cite{IMS2} by A. Israelsson, W. Staubach and the author. \\

Combining Lemma \ref{Weaksparse} with Theorem \ref{main_FIO} and Theorem \ref{main_OIO} respectively, we obtain the following two corollaries.

\begin{Cor}\label{weak_fio}
    Assume that $a(x,\xi)\in S^{m}_{\rho,\delta}(\Rn)$ for $0\leq \rho\leq 1$ and $0\leq \delta< 1$, and $\varphi(x,\xi)$ is in the class $\Phi^2$ with rank $0\leq\kappa\leq n-1$ and is \emph{SND}. Then, for
    \begin{equation*}
        m<
        \begin{cases}
            -n(1-\rho)-\zeta, & 0\leq\rho< \frac{n}{n+\kappa},\\
            -\frac{n(1-\rho)+\kappa\rho}{2}-\zeta, &  \frac{n}{n+\kappa}\leq\rho\leq 1,
        \end{cases}
    \end{equation*}
    it holds that
    $$\|T_a^\varphi f\|_{L^{1,\infty}(\Rn)}\lesssim \|f\|_{L^{1}(\Rn)}.$$
\end{Cor}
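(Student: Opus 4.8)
\textbf{Proof proposal for Corollary \ref{weak_fio}.}

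The plan is to apply Lemma \ref{Weaksparse}, which reduces the weak $(1,1)$ bound to the existence of a $(1,q)$-sparse form dominating $\langle T_a^\varphi f, g\rangle$ for \emph{some} finite $q$. So the real task is to locate a single point $(1/q, 1/p')$ in the admissible region \eqref{FIO_ranges} with $q = 1$ (equivalently $1/q = 1$), and to read off the resulting constraint on $m$. Looking at \eqref{FIO_ranges}, setting $q = 1$ forces us into the first regime $1 \le q \le p \le 2$, so the relevant condition becomes
\begin{equation*}
m < -m_\rho(1,2) - \kappa\rho\Big(\tfrac1p - \tfrac12\Big) - \zeta = -n(1-\rho)\cdot\tfrac12 - \kappa\rho\Big(\tfrac1p-\tfrac12\Big) - \zeta,
\end{equation*}
using $m_\rho(1,2) = -n(1-\rho)|1 - \tfrac12| = -\tfrac{n(1-\rho)}{2}$. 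We are free to choose $p \in [1,2]$; the right-hand side is largest (least restrictive) when $\tfrac1p - \tfrac12$ is as small as possible, i.e. $p = 2$, giving $m < -\tfrac{n(1-\rho)}{2} - \zeta$, and smallest (most restrictive) when $p = 1$, giving $m < -\tfrac{n(1-\rho)}{2} - \tfrac{\kappa\rho}{2} - \zeta = -\tfrac{n(1-\rho)+\kappa\rho}{2} - \zeta$.

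First I would observe that Theorem \ref{main_FIO} only \emph{requires} the point to lie in the admissible region for \emph{some} choice of $p$, so in principle $m < -\tfrac{n(1-\rho)}{2} - \zeta$ would always suffice to obtain a $(1,\infty)$ — hence $(1,q)$ for any $q$ — sparse bound. However, to get the sparse form $\Lambda_{\mathcal{S},1,p'}(f,g)$ to actually control $|\langle Tf,g\rangle|$ via Lemma \ref{Weaksparse}, one needs $p' < \infty$, i.e. $p > 1$; and here the second alternative sparse bound $\Lambda_{\widetilde{\mathcal{S}}, p', q}$ with the roles swapped is what lets $q = 1$ appear in the \emph{second} slot, giving exactly the $(1,q)$-form hypothesis of Lemma \ref{Weaksparse}. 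So the argument is: pick $q = 1$ in the second line of the conclusion of Theorem \ref{main_FIO}, which needs $(p', 1)$ admissible — equivalently $(1/q_{\text{new}}, 1/p'_{\text{new}}) = (1, 1/p')$ admissible in \eqref{FIO_ranges} — and the resulting constraint on $m$ is the one displayed above as a function of $p$. The stated dichotomy in the corollary then comes from optimising over $\rho$: when $\rho$ is small, the term $\kappa\rho(1/p - 1/2)$ is a minor perturbation and one can afford to take $p$ close to $1$ and still land the best bound $-n(1-\rho)-\zeta$ (note $-n(1-\rho) \le -\tfrac{n(1-\rho)+\kappa\rho}{2}$ precisely when $n(1-\rho) \ge \kappa\rho$, i.e. $\rho \le \tfrac{n}{n+\kappa}$); when $\rho \ge \tfrac{n}{n+\kappa}$ the balanced choice gives $-\tfrac{n(1-\rho)+\kappa\rho}{2} - \zeta$ instead. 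Concretely: in the regime $\rho \le \tfrac{n}{n+\kappa}$ one checks that the value $-n(1-\rho) - \zeta$ is $\le -\tfrac{n(1-\rho)+\kappa\rho}{2} - \zeta$, so requiring $m$ below the former automatically places $(1,1/p')$ in the region for a suitable $p$; in the regime $\rho > \tfrac{n}{n+\kappa}$, one takes $p = 1$ (or its limiting value within the open region) and gets $-\tfrac{n(1-\rho)+\kappa\rho}{2} - \zeta$.

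The only genuine subtlety — and the place I would be most careful — is the endpoint bookkeeping: the sparse bound of Theorem \ref{main_FIO} is strict (open region), and $p = 1$ is a \emph{boundary} value of the segment $1 \le q \le p \le 2$, so to invoke Lemma \ref{Weaksparse} with $q$ finite we must take $p > 1$ strictly and let $p \downarrow 1$, absorbing the loss into the strict inequality for $m$. Since the corollary's hypothesis on $m$ is itself strict, there is always room: given $m < -\tfrac{n(1-\rho)+\kappa\rho}{2} - \zeta$ we may choose $p > 1$ close enough to $1$ that $m < -\tfrac{n(1-\rho)}{2} - \kappa\rho(\tfrac1p - \tfrac12) - \zeta$ still holds, obtain the $\Lambda_{\widetilde{\mathcal{S}}, p', 1}(f,g)$ bound, and apply Lemma \ref{Weaksparse} with $q = p'$ finite. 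This completes the reduction; the rest is the numerical comparison of $-n(1-\rho)$ and $-\tfrac{n(1-\rho)+\kappa\rho}{2}$ already indicated, which pins down the crossover at $\rho = \tfrac{n}{n+\kappa}$.
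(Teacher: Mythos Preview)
Your overall approach --- extract a $(1,r)$-sparse bound from Theorem \ref{main_FIO} and feed it into Lemma \ref{Weaksparse} --- is exactly what the paper does. But you have the two sparse forms in Theorem \ref{main_FIO} swapped in your middle paragraph: Lemma \ref{Weaksparse} requires $\Lambda_{\mathcal{S},1,r}$ with $1$ in the \emph{first} slot, so the relevant conclusion is the \emph{first} line $|\langle T_a^\varphi f,g\rangle|\le C\,\Lambda_{\mathcal{S},q,p'}(f,g)$ taken at $q=1$, which yields $\Lambda_{\mathcal{S},1,p'}$ with $p'\in[2,\infty)$ whenever $p\in(1,2]$. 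The dual form $\Lambda_{\widetilde{\mathcal{S}},p',1}$ puts $1$ in the second slot and does \emph{not} match the hypothesis of Lemma \ref{Weaksparse}; that detour is both unnecessary and incorrect. Once you use the correct form, the argument simplifies: taking $p=2$ already gives the sparse bound under $m<-\tfrac{n(1-\rho)}{2}-\zeta$, and both branches of the corollary's hypothesis are visibly stronger than this (since $n(1-\rho)\ge \tfrac{n(1-\rho)}{2}$ and $\tfrac{n(1-\rho)+\kappa\rho}{2}\ge \tfrac{n(1-\rho)}{2}$), so the endpoint bookkeeping with $p\downarrow 1$ is not actually needed.
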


\begin{Cor}\label{weak_oio}
    Let $n\geq 1$, $0<k<\infty$. Assume that $\varphi\in \textart F^k$ {is \emph{SND}}, satisfies the \emph{LF}$(\mu)$-condition for some $0<\mu\leq 1$, and the $L^2$-condition \eqref{eq:L2 condition_old}. Assume also that $a(x,\xi)\in S^{m}_{\rho,\delta}(\Rn),$ for $0\leq \rho\leq 1$ and $0\leq \delta< 1$. Then, for $m<-\frac{n(1-\varkappa)}{2}-\zeta$ it holds that
    $$\|T_a^\varphi f\|_{L^{1,\infty}(\Rn)}\lesssim \|f\|_{L^{1}(\Rn)}.$$
\end{Cor}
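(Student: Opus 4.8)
The plan is to derive Corollary \ref{weak_oio} as a direct consequence of Lemma \ref{Weaksparse} applied to Theorem \ref{main_OIO}, so the real content is to verify that the hypothesis $m<-\frac{n(1-\varkappa)}{2}-\zeta$ places us inside the admissible range \eqref{OIO_ranges} for a pair of the form $(q,p')=(1,q)$, i.e. with first exponent $q=1$. Concretely, Lemma \ref{Weaksparse} asks for a bound $|\langle T_a^\varphi f,g\rangle|\le C\,\Lambda_{\mathcal{S},1,q}(f,g)$ for some $1\le q<\infty$. The second line of \eqref{OIO_ranges} with the roles as in the statement gives sparse forms $\Lambda_{\mathcal{S},q,p'}$ for $1\le q\le 2\le p\le q'$; I want to specialize this so that the \emph{first} slot has exponent $1$. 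By the symmetry built into Theorem \ref{main_OIO} (the two alternatives $\Lambda_{\mathcal{S},q,p'}$ and $\Lambda_{\widetilde{\mathcal S},p',q}$), choosing $q=1$ in the first line and reading off the companion form $\Lambda_{\widetilde{\mathcal S},p',q}$ with $q=1$ yields a $(1,p')$-type sparse form; alternatively one takes $q=1$ directly. So first I would fix $q=1$ and show that the defining inequality of \eqref{OIO_ranges} is satisfied for a suitable $p$.

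The key computation: with $q=1$, the first line of \eqref{OIO_ranges} requires $m<-m_\varkappa(1,2)-\zeta$. Since $m_\varkappa(1,2)=-n(1-\varkappa)|1-\tfrac12|=-\tfrac{n(1-\varkappa)}{2}$, this reads exactly $m<\tfrac{n(1-\varkappa)}{2}-\zeta$, which is weaker than — hence implied by — the hypothesis $m<-\tfrac{n(1-\varkappa)}{2}-\zeta$ (recall $\zeta\le 0$ and $n(1-\varkappa)\ge 0$). Wait — more carefully: I should use the \emph{second} line, $m<-m_\varkappa(q,p)-\zeta(\tfrac2q-1)$ valid for $1\le q\le 2\le p\le q'$, with $q=1$, so $q'=\infty$ and $\tfrac2q-1=1$; then $-m_\varkappa(1,p)=n(1-\varkappa)|1-\tfrac1p|=n(1-\varkappa)(1-\tfrac1p)$. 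Letting $p\to\infty$ (or taking any $p$ large), this endpoint demands $m<n(1-\varkappa)(1-\tfrac1p)-\zeta$, and as $p\to 2$ it is $m<\tfrac{n(1-\varkappa)}{2}-\zeta$. In every case the required upper bound on $m$ is $\ge \tfrac{n(1-\varkappa)}{2}-\zeta > -\tfrac{n(1-\varkappa)}{2}-\zeta$, so the hypothesis of the corollary comfortably lands inside the open region \eqref{OIO_ranges} for $(q,p')=(1,p')$ with some admissible $p$ (e.g. $p=2$). Thus Theorem \ref{main_OIO} supplies a sparse collection $\mathcal{S}$ with $|\langle T_a^\varphi f,g\rangle|\le C\,\Lambda_{\mathcal{S},1,p'}(f,g)$ for all bounded compactly supported $f,g$.

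Next I would invoke Lemma \ref{Weaksparse} verbatim: it says that if a sublinear operator $T$ satisfies $|\langle Tf,g\rangle|\le C\,\Lambda_{\mathcal{S},1,q}(f,g)$ for every bounded compactly supported pair with a sparse $\mathcal{S}$ (the choice of $\mathcal{S}$ allowed to depend on $f,g$), then $T:L^1(\Rn)\to L^{1,\infty}(\Rn)$. Here $q$ is the conjugate exponent $p'\in[1,\infty)$ produced above (taking $p=2$ gives $q=p'=2$, well within $1\le q<\infty$), and $T=T_a^\varphi$ is sublinear — indeed linear — so the lemma applies directly and gives $\|T_a^\varphi f\|_{L^{1,\infty}(\Rn)}\lesssim\|f\|_{L^1(\Rn)}$, which is the claim. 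The one bookkeeping point to state cleanly is that the exponent labeling in Theorem \ref{main_OIO} is $(q,p')$ with $q$ in the first (= "$f$") slot, and that is exactly the slot Lemma \ref{Weaksparse} wants to equal $1$; a remark that one may equally use the second sparse family $\widetilde{\mathcal S}$ with roles reversed covers any orientation ambiguity.

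There is no serious obstacle here — the corollary is a packaging statement — so the only thing to get right is the arithmetic that $m<-\tfrac{n(1-\varkappa)}{2}-\zeta$ implies membership in \eqref{OIO_ranges} at $q=1$, together with the observation that the boundary case in the displayed range ($1\le q\le p\le 2$ first line) degenerates at $q=1$ to precisely the $L^2$-critical threshold shifted by $\zeta$, so the strict inequality in the hypothesis is exactly what is needed for the open region. I would therefore write the proof as: (i) substitute $q=1$ into \eqref{OIO_ranges}; (ii) compute $m_\varkappa(1,2)=-\tfrac{n(1-\varkappa)}{2}$ and conclude the hypothesis implies the range condition; (iii) quote Theorem \ref{main_OIO} to get the $(1,p')$ sparse bound; (iv) quote Lemma \ref{Weaksparse} to conclude the weak $(1,1)$ bound. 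If one wishes to optimize, one notes any admissible $p\in[2,\infty)$ works and the cleanest is $p=2$.
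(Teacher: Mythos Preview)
Your proposal is correct and follows exactly the paper's approach: the paper simply states that Corollary~\ref{weak_oio} follows by ``combining Lemma~\ref{Weaksparse} with Theorem~\ref{main_OIO},'' and you have unpacked precisely this, specializing \eqref{OIO_ranges} to $q=1$ and then invoking Lemma~\ref{Weaksparse}. The sign discrepancy you noticed (that the literal condition $m<-m_\varkappa(1,2)-\zeta=\tfrac{n(1-\varkappa)}{2}-\zeta$ appears weaker than the corollary's hypothesis) stems from a typo in the paper's definition \eqref{decay_OIO}; the proof of Theorem~\ref{main_OIO} shows the intended threshold at $q=1$ is exactly $m<-\tfrac{n(1-\varkappa)}{2}-\zeta$, so the corollary's hypothesis is sharp rather than ``comfortable.''
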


\subsection{Vector-Valued inequalities}

By leveraging the $A_p$ weighted estimates obtained earlier and Lemma \ref{linearizable}, we can derive new vector-valued estimates for OIOs and FIOs. Define $\|\cdot\|_{L^q(\ell^p)}$ to be the quasi-norm
\begin{align*}
    \|(f_k)\|_{L^q(\ell^p)}&:=\Big\|\sum_{k} |f_k(\cdot)|^p\Big\|_{L^q(\Rn)}^{1/p}.
\end{align*}
Define the space $L^q(\ell^p)$ as the sequence space in which $\|(f_k)\|_{L^q(\ell^p)}<\infty$. 

As we mentioned in the introduction there are important applications for FIOs and OIOs with simplified phase functions of the form $x\cdot\xi+\phi(\xi)$. The following result is a weighted vector-valued estimate for FIOs and OIOs with simple phase functions. For weighted Triebel-Lizorkin estimates we refer the reader to \cite{DS}. Since the proof of the result below follows essentially in the same way as theirs, we omit the details and refer the reader to that paper.

\begin{Th}\label{Vector-valued_smooth}
Let $a\in S^{m}_{\rho,\delta}$be an amplitude with $m\leq 0$ and $0\leq\rho\leq 1$, $0\leq\delta< 1$. Let the phase functions $\varphi$ be SND and satisfy either 
    \begin{enumerate}
        \item $\varphi\in \Phi^2$, or
        \item $\varphi\in \textart{F}^k$, and $\varphi$ is $LF(\mu)$ for some $0< \mu\leq  1$.
    \end{enumerate}

    If (1) holds, take $m=-n(1-\rho)-\zeta$, while if (2) holds take $m=-n(1-\varkappa)-\zeta$. For all $1<q,p<\infty$, and $(f_j)_{j\geq 0}\in L^q(\ell^p)$ it holds that
    \begin{align}\label{Rubioeq}
        \|\Big(\sum_{j=0}^\infty |T_{a}^{\varphi} f_j|^p\Big)^{1/p}\|_{L^q_w(\Rl^n)}\lesssim \|\Big(\sum_{j=0}^\infty |f_j|^p\Big)^{1/p}\|_{L^q_w(\Rl^n)}.
    \end{align}
    Moreover, for $s'\geq s$ it holds that
    \begin{align}\label{Rubioeq1}
        \|T_{a}^{\varphi}\|_{F^{s'}_{p,q}(w)\to F^s_{p,q}(w)}\lesssim 1,
    \end{align}
    whenever the phase is simple. \emph{(}See Definition \ref{def:TLspace} for the details on the space $F^s_{p,q}(w)$.\emph{)}
\end{Th}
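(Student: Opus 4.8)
The plan is to deduce Theorem \ref{Vector-valued_smooth} from the weighted scalar estimates of Corollary \ref{weightedConsequencesSmooth} (and Remark \ref{weighted_remark}) together with Rubio de Francia's extrapolation-type Theorem \ref{Rubio}. First I would fix the endpoint order $m=-D-\zeta$ with $D=n(1-\rho)$ in case (1) and $D=n(1-\varkappa)$ in case (2); this is precisely the corner of the trapezoid $\textbf{T}$ corresponding to $q_0=1$, $p_0=\infty$ in \eqref{FIO_ranges}, resp. \eqref{OIO_ranges}, so Corollary \ref{weightedConsequencesSmooth} (or item (1)/(2) of Remark \ref{weighted_remark} read at the endpoint after using the open property of $A_p$) gives that $T_a^\varphi$ is bounded on $L^r_w$ for every $1<r<\infty$ and every $w\in A_r$. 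Second, I would check that $T_a^\varphi$ is linearizable in the sense of Definition \ref{linearizable}: since $T_a^\varphi$ is already linear, one can simply take $\mathcal{L}=T_a^\varphi$, so the hypothesis of Theorem \ref{Rubio} is met trivially. Applying Theorem \ref{Rubio} with the single operator $T_j:=T_a^\varphi$ for all $j$ (so that $T_jf_j=T_a^\varphi f_j$), fixed $r=p$, and the weight class $A_p$, yields
\begin{equation*}
\Big\|\big(T_a^\varphi f_j\big)_j\Big\|_{L^q_w(\ell^p)}\lesssim_w \Big\|\big(f_j\big)_j\Big\|_{L^q_w(\ell^p)}
\end{equation*}
for all $1<q,p<\infty$ and $w\in A_q$, which is exactly \eqref{Rubioeq} (note the quasi-norm $\|(f_k)\|_{L^q(\ell^p)}$ as defined just before the theorem matches the left-hand side once one unwinds the $1/p$ power).

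Third, for \eqref{Rubioeq1} I would reduce the Triebel--Lizorkin bound to the vector-valued bound by a standard Littlewood--Paley argument, available because the phase is \emph{simple}, i.e.\ of the form $\varphi(x,\xi)=x\cdot\xi+\phi(\xi)$. Writing $\psi_j(D)$ for the Littlewood--Paley pieces and using that $\Psi_j\psi_j=\psi_j$, one commutes $T_a^\varphi$ with $\psi_j(D)$ modulo a harmless symbol error: because $\phi(\xi)$ depends only on $\xi$, the composition $\psi_j(D)T_a^\varphi$ is again an operator of the same type with amplitude supported in $|\xi|\sim 2^j$, and $T_a^\varphi\psi_j(D)=T_{a\psi_j}^\varphi$ has amplitude in $S^{m}_{\rho,\delta}$ localized to the $j$-th dyadic annulus. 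Hence $2^{js}\psi_j(D)T_a^\varphi f$ is controlled, after summing a rapidly decaying tail in $|j-l|$, by $\sum_{l}2^{ls'}T_{a_{j,l}}^{\varphi}\psi_l(D)f$ with uniformly bounded amplitudes $a_{j,l}$, the gain $s'\ge s$ absorbing the mismatch between the two exponents. Feeding the family $\{T_{a_{j,l}}^\varphi\}$ into \eqref{Rubioeq} (all these operators satisfy the same hypotheses with the same $m$, uniformly in $j,l$) converts the $\ell^q(L^p_w)$ norm of the pieces of $T_a^\varphi f$ into that of the pieces of $f$, which is $\|f\|_{F^{s'}_{p,q}(w)}$.

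The main obstacle is the third step: one must verify that commuting $T_a^\varphi$ past the Littlewood--Paley projections genuinely produces operators of the \emph{same} class with amplitudes bounded uniformly in the annulus index, and that the off-diagonal terms $|j-l|\gtrsim 1$ decay fast enough to be summed. For a simple phase $x\cdot\xi+\phi(\xi)$ this is classical — the $\phi(\xi)$ factor is a Fourier multiplier and commutes with $\psi_l(D)$ — but the uniformity of the $S^m_{\rho,\delta}$ seminorms of $a\psi_l$ across $l$, and of the resulting weighted operator norms, is what makes the extrapolation legitimate; this is precisely the point where we invoke that the argument of \cite{DS} transfers verbatim, and I would cite that reference rather than reproduce the kernel estimates. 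Everything else (linearizability, the single application of Theorem \ref{Rubio}, reading off the endpoint from the trapezoid) is routine.
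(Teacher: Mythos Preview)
Your proposal is correct and follows essentially the same route as the paper: deduce the scalar weighted bound on $L^r_w$ for all $w\in A_r$ from Corollary~\ref{weightedConsequencesSmooth}/Remark~\ref{weighted_remark} at the endpoint $m=-D-\zeta$ (using the open property of $A_p$), feed this into Rubio de Francia's Theorem~\ref{Rubio} to get~\eqref{Rubioeq}, and then reduce~\eqref{Rubioeq1} to~\eqref{Rubioeq} via the (near-)commutation of $\psi_j(D)$ with $T_a^\varphi$ for simple phases, citing~\cite{DS} for the details. If anything, your treatment of the third step is slightly more careful than the paper's one-line sketch: the paper asserts $[\psi_j,T_a^\varphi]=0$ outright, which is literally true only when $a$ is $x$-independent, whereas you correctly allow for an off-diagonal tail with rapid decay in $|j-l|$; since both you and the paper defer the actual estimates to~\cite{DS}, this does not affect the argument.
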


In the above result \eqref{Rubioeq} is a direct consequence of Theorem \ref{Rubio}, and item (2) in remark \ref{weighted_remark} applied to $T_{a}^{\varphi}$. For the reader interested in the proof of estimate \eqref{Rubioeq1} we refer to \cite{DS}. The argument is rather short and is essentially an application of a variant of \eqref{Rubioeq}, and using that $[\psi_j,T_{a}^{\varphi}]=0$ for simple phase functions. 

\section{Decompositions and Kernel Estimates}\label{sec:decomp}

In this section we provide the necessary tools used in the proofs of Theorem \ref{main_FIO} and Theorem \ref{main_OIO}.

\subsection*{Decomposition of Fourier integral operators}

In this section we introduce the spatial and frequency decompositions related to the FIOs, we refer the reader to \cite{IMS} for the origin of this decomposition and details on the proofs of the Lemma \ref{lem:chijnu} and Lemma \ref{lem:h}.\\

To start, one considers an FIO $T^{\varphi}_{a}$ with amplitude $a(x, \xi)\in S^{m}_{\rho, \delta}(\Rl^n)$, $0<\rho\leq 1$, $0\leq \delta<1$, $m\in\Rl$ and $0 \leq \kappa\leq n-1$, $\varphi\in\Phi^2$,  $\mathrm{rank}\,\partial^{2}_{\xi\xi} \varphi(x, \xi) = \kappa$, on the support of $a(x, \xi)$ and its Littlewood-Paley decomposition
\begin{equation}\label{eq:LPdecomp}
T^{\varphi}_{a}= \sum_{j=0}^\infty  T^{\varphi}_{a}\psi_j(D) =: \sum_{j=0}^\infty  T_j, 
\end{equation}
where the kernel $K_j$ of $T_j$ is given by
\begin{align*}
    &K_j(x,y):= \int_{\Rn} e^{i\varphi(x,\xi)-iy\cdot\xi}\,\psi_j(\xi)\,a(x,\xi)\, \dd \xi.
\end{align*}
Here each $\psi_j$ (for $j\geq 1$) is supported in a dyadic shell $\left \{2^{j-1}\leq \vert \xi\vert\leq 2^{j+1}\right \}$.\\

The shells $A_j$ will in turn be decomposed into truncated cones using the following construction. Since $\varphi$ has constant rank $\kappa$ on the support of $a(x, \xi)$ we may assume that there exists some $\kappa$-dimensional submanifold $S_\kappa(x)$ of $\mathbb S^{n-1}\cap \Gamma$ for some sufficiently narrow cone $\Gamma$, such that $\mathbb S^{n-1}\cap \Gamma$ is parameterised by $\overline{\xi}=\overline{\xi}_x(u,v)$, for $(u,v)$ in a bounded open set $U\times V$ near $(0,0)\in \Rl^{\kappa}\times\Rl^{n-\kappa-1}$, and such that $\overline{\xi}_x(u,v)\in S_\kappa(x)$ if and only if $v=0$, and $\nabla_\xi\varphi(x,\overline{\xi}_x(u,v))=\nabla_\xi\varphi(x,\overline{\xi}_x(u,0))$.

\begin{Def}\label{def:LP2}
For each $j\in\mathbb Z_{>0}$, let $\{u^{\nu}_{j} \} $ be a collection of points in $U$ such that
\begin{enumerate}
    \item [$(i)$]$  | u^{\nu}_{j}-u^{\nu'}_{j} |\geq 2^{-j/2},$ whenever $\nu\neq \nu '$.
    \item [$(ii)$] If $u\in U$, then there exists a $
    u^{\nu}_{j}$ so that $\vert u -u^{\nu}_{j}  \vert
    <2^{-j/2}$.
\end{enumerate}
Moreover we set $\xi^{\nu}_{j}=\overline{\xi}_x(u^{\nu}_{j},0).$ 
One may take such a sequence by choosing a maximal collection $\{\xi_{j}^{\nu}\}$ for which $(i)$ holds, then $(ii)$ follows. Now, denote the number of cones needed by $\mathscr N_j$.
\end{Def}

Let $\Gamma^{\nu}_{j}$ denote the cone in the $\xi$-space given by
\begin{equation}\label{eq:gammajnu}
\Gamma^{\nu}_{j}
:= \{ \xi \in \Rl^n ;\, \xi=s\overline{\xi}_x(u,v), | \frac{u}{|u|} - u^{\nu}_{j} | <2^{-j/2}, v\in V, s>0\}.
\end{equation}
We decompose each $A_j$ into truncated cones $\Xi^{\nu}_{j}:=\Gamma^{\nu}_{j}\cap A_j.$ One also defines the partition of unity $\{\chi_j^\nu(u)\}_{j,\nu}$ on $\Gamma$ (a construction can be found in \cite{IMS}). This homogeneous partition of unity $\chi_j^\nu$ is defined such that $\chi_j^\nu (s\overline{\xi}_x(u,v))=\tilde\chi_j^\nu (u),$
for $v\in V$ and $s>0$. Using this decomposition, we make a second dyadic decomposition of $T_j$ as 
\begin{equation*}
    T_j^\nu f(x) := \int_{\Rn}K_j^\nu(x,y)\, f(y)\dd y,
\end{equation*}
where
\begin{align*}
    K_j^\nu(x,y):= \int_{\Rn} e^{i\varphi(x,\xi)-iy\cdot\xi}\,\psi_j(\xi)\,\chi_j^\nu(\xi)\,a(x,\xi)\dd \xi .
\end{align*}
Observe that $T_j = \sum_{\nu=1}^{\mathscr N_j} T_j^\nu.$ We choose the coordinate axes in $\xi$-space such that {$\xi''\in (T_{\xi^{\nu}_j}S_\kappa(x))^\perp$ and $\xi'\in T_{\xi^{\nu}_j}S_\kappa(x)$}, so that $\xi=(\xi'',\xi')$. 

\begin{Lem}\label{lem:chijnu}
    The functions $\chi_j^\nu$ belong to $\mathcal C^\infty(\Rl^n\setminus \{0\} )$ and are supported in the cones $\Gamma_j^\nu$. They satisfy 
    \begin{align*}
        |\d_\xi^\alpha \chi_j^\nu (\xi) |\lesssim 
    2^{j|\alpha|/2}
    |\xi|^{-|\alpha|}
    \end{align*}
    for all multi-indices $\alpha$.
\end{Lem}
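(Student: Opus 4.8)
The plan is to prove the two claims about $\chi_j^\nu$—smoothness with support in $\Gamma_j^\nu$, and the derivative bound $|\d_\xi^\alpha \chi_j^\nu(\xi)| \lesssim 2^{j|\alpha|/2}|\xi|^{-|\alpha|}$—by tracing through the construction of these functions from the underlying partition $\{\tilde\chi_j^\nu(u)\}$ on the parameter set $U$. First I would recall (or build, following \cite{IMS}) the partition of unity on $U$: take a fixed nonnegative $\Theta\in\mathcal C_c^\infty(\Rl^\kappa)$ with $\Theta\equiv 1$ on the unit ball and supported in the ball of radius $2$, set $\Theta_j^\nu(u):=\Theta(2^{j/2}(u-u_j^\nu))$, and normalize $\tilde\chi_j^\nu:=\Theta_j^\nu/\sum_{\nu'}\Theta_j^{\nu'}$. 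The separation condition $(i)$ and covering condition $(ii)$ of Definition \ref{def:LP2} guarantee that the sum in the denominator is locally finite with a uniformly bounded number of nonzero terms (by a volume-packing argument in $\Rl^\kappa$) and is bounded below by a positive constant on $U$, so the quotient is well-defined and smooth, and $\sum_\nu\tilde\chi_j^\nu\equiv 1$ on $U$. By construction each $\tilde\chi_j^\nu$ is supported where $|u-u_j^\nu|<2^{-j/2+1}$, which up to adjusting constants is the support stated in \eqref{eq:gammajnu}.

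Next I would establish the scaling of derivatives of $\tilde\chi_j^\nu$ in the $u$-variable: since $\Theta_j^\nu$ is $\Theta$ composed with a dilation of factor $2^{j/2}$, we get $|\d_u^\gamma \Theta_j^\nu(u)|\lesssim 2^{j|\gamma|/2}$, and because the denominator is bounded below and has uniformly bounded $\mathcal C^\infty$-seminorms with the same $2^{j/2}$-scaling, the Leibniz/quotient rule gives $|\d_u^\gamma\tilde\chi_j^\nu(u)|\lesssim 2^{j|\gamma|/2}$ for all multi-indices $\gamma$. Then I would pass from $u$ to $\xi$. By definition $\chi_j^\nu(s\overline\xi_x(u,v))=\tilde\chi_j^\nu(u)$, i.e. $\chi_j^\nu$ is homogeneous of degree $0$ in $\xi$ and depends only on the angular variable $u=u(\xi/|\xi|)$. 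The map $\xi\mapsto u$ restricted to a narrow cone $\Gamma$ is a smooth function of $\xi/|\xi|$ with derivatives in $\xi$ of size $|\xi|^{-|\alpha|}$ (the $\xi$-dependence of the parametrization $\overline\xi_x$ is smooth in $x$ and the cone is narrow, so all relevant Jacobians and their inverses are bounded). Applying the chain rule and Faà di Bruno to $\chi_j^\nu(\xi)=\tilde\chi_j^\nu(u(\xi))$, each of the $\le|\alpha|$ differentiations either falls on a factor $\d_\xi(\text{component of }u)$, contributing $|\xi|^{-1}$, or deepens the order of the $u$-derivative of $\tilde\chi_j^\nu$, contributing at most $2^{j/2}$ per order; combining, every term is bounded by $2^{j|\alpha|/2}|\xi|^{-|\alpha|}$, which is exactly the claim.

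The main obstacle is bookkeeping rather than conceptual: one must verify carefully that the change of variables $\xi\mapsto u$—which a priori depends on $x$ through the parametrization $\overline\xi_x(u,v)$—has all its derivatives in $\xi$ controlled uniformly in $x$ and in the angular direction, using only the homogeneity of $\varphi$ of degree one and the constant-rank hypothesis, and that the narrowness of the cone $\Gamma$ can indeed be arranged so that this holds. This is where invoking \cite{IMS} is cleanest: the construction there fixes $\Gamma$ small enough that $\overline\xi_x$ and its inverse are uniformly smooth, so I would cite that construction for the properties of $\{\tilde\chi_j^\nu\}$ and of the coordinate map, and reserve the present proof for the (short) chain-rule computation turning $|\d_u^\gamma\tilde\chi_j^\nu|\lesssim 2^{j|\gamma|/2}$ into $|\d_\xi^\alpha\chi_j^\nu|\lesssim 2^{j|\alpha|/2}|\xi|^{-|\alpha|}$. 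A secondary point to state explicitly is that the number of indices $\nu$ with $u_j^\nu$ within a fixed multiple of $2^{-j/2}$ of a given point is $O(1)$ uniformly in $j$, which is what makes the normalization smooth with controlled seminorms; this follows immediately from $(i)$ and a packing argument.
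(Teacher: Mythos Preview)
Your proposal is correct and follows the standard construction. The paper itself does not prove this lemma: it states the result and explicitly refers the reader to \cite{IMS} for the details of the construction and the proof. Your sketch---building $\tilde\chi_j^\nu$ on $U$ from dilated bump functions, using the separation/covering conditions $(i)$--$(ii)$ to control the normalizing sum, obtaining $|\d_u^\gamma\tilde\chi_j^\nu|\lesssim 2^{j|\gamma|/2}$, and then transferring to $\xi$ via the chain rule using that $\xi\mapsto u(\xi/|\xi|)$ has $\xi$-derivatives of size $|\xi|^{-|\alpha|}$---is exactly the argument one expects in \cite{IMS}, so there is nothing to compare.
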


We will also make us of the following slightly modified version of the partition of unity
\begin{equation}\label{chialt}
    \tilde{\chi}_j^\nu := \eta_j^\nu \Big(\sum_\nu \big(\eta_j^\nu\big)^2\Big)^{-\frac{1}{2}},
\end{equation}
for which
\begin{equation}
    \sum_\nu \tilde{\chi}_j^\nu(\xi)^2 = 1,\quad \text{ for all } j\text { and } \xi\neq 0.
\end{equation}
This partition of unity also satisfies the estimates in Lemma \ref{lem:chijnu}. In connection to this partition of unity we also define the fattened version of the second dyadic decomposition. We define these pieces by $\mathcal{X}_j^{\nu}(D):=\tilde{\chi}_j^\nu(D)\Psi_j(D)$. Observe that these have the nice property that they satisfy the equation
\begin{equation}\label{fatsecond}
    T_j^\nu f(x)=T_j^\nu f_j^\nu (x),
\end{equation}
where $f_j^\nu (x):=\mathcal{X}_j^{\nu}(D) \Psi_j(D) f(x)$. Moreover, these pieces satisfy the following estimate
\begin{equation} \label{eq:high}
	\|(\mathcal X_{j}^{\nu})^\vee\|_{L^p(\Rl^n)} \lesssim  2^{j\left (\frac{n+1}{2}-\frac{n+1}{2p}\right )},
\end{equation}
the proof of which follows using that $\tilde{\chi}_j^\nu(D)$ satisfies lemma \ref{lem:chijnu} and partial integration.\\

We will split the phase $\varphi(x,\xi)-y\cdot\xi$ into two different pieces,  $(\nabla_\xi\varphi(x,\xi_j^\nu)-y)\cdot \xi$ (which is linear in $\xi$), and $\varphi(x,\xi) - \nabla_\xi \varphi(x,\xi_j^\nu) \cdot \xi$. The following lemma yields an estimate for the nonlinear second piece. For $j\in \Z_{\geq0}$, $1\leq\nu \leq \mathscr N_j,$ define
$$h_j^\nu(x,\xi):= \varphi(x,\xi) -\nabla_\xi \varphi(x,\xi_j^\nu)\cdot \xi.$$

\begin{Lem}\label{lem:h}
Let $\varphi(x,\xi)\in\mathcal{C}^{\infty}(\Rl^n \times \Rl^n\setminus\{0\})$ be positively homogeneous of degree one in $\xi$. 
Then for $\xi$ in $\Xi^{\nu}_{j}$, one has for $\alpha\in \mathbb Z_{> 0}^{\kappa}$ that 
$$
    |{\partial_{\xi'}^\alpha} h_j^\nu(x,\xi)| \lesssim \begin{cases}
    |\xi'| |\xi|^{-1} & \text{ if } |\alpha|= 1  \\ 
    |\xi| ^{1-|\alpha|} & \text{ if } |\alpha| \geq 2
    \end{cases} \,\lesssim 2^{-j/2}.
    $$
Moreover, for $\beta\in \mathbb Z_{> 0}^{n-\kappa}$ one has that
$$
    |\partial_{\xi''}^\beta h_j^\nu(x,\xi)|\lesssim |\xi'|^2 |\xi| ^{-|\beta|-1}\lesssim 2^{-j}.
$$
And finally one has the estimate
    \begin{equation}\label{eq:trunkerad0}
        |\nabla_{\xi} h^{\nu}_j(x,\xi)|\lesssim 2^{-j/2},\quad \xi\in  \Xi^{\nu}_j.
    \end{equation}
\end{Lem}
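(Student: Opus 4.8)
The plan is to reduce every estimate to a single second–order Taylor expansion of $h^\nu_j$ in the variable $\xi'$, exploiting that $h^\nu_j(x,\cdot)$ and its \emph{full} $\xi$–gradient vanish along the subspace $\{\xi'=0\}$ — which, by the construction of the second dyadic decomposition, is exactly the flat leaf of $\partial^2_{\xi\xi}\varphi(x,\cdot)$ through $\xi^\nu_j$. Concretely: on a neighbourhood of $\xi^\nu_j$ the $v$–curves $v\mapsto\overline{\xi}_x(u,v)$ are level sets of $\nabla_\xi\varphi(x,\cdot)$, so on the cone over $\{\overline{\xi}_x(u^\nu_j,v)\}_{v\in V}$ one has $\nabla_\xi\varphi(x,\xi)=\nabla_\xi\varphi(x,\xi^\nu_j)$; by Euler's identity (degree–one homogeneity) this gives $h^\nu_j(x,\xi)=\varphi(x,\xi)-\nabla_\xi\varphi(x,\xi)\cdot\xi=0$ and $\nabla_\xi h^\nu_j(x,\xi)=\nabla_\xi\varphi(x,\xi)-\nabla_\xi\varphi(x,\xi^\nu_j)=0$ there. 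Hence $h^\nu_j(x,0,\xi'')=0$ and $\partial_\xi h^\nu_j(x,0,\xi'')=0$, and Taylor's formula in $\xi'$ yields
\[
h^\nu_j(x,\xi)=\sum_{|\mu|=2}\xi'^{\,\mu}\,c_\mu(x,\xi),\qquad
c_\mu(x,\xi)=\frac{2}{\mu!}\int_0^1(1-t)\,\partial^\mu_{\xi'}h^\nu_j(x,t\xi',\xi'')\dd t .
\]
Since $h^\nu_j(x,\cdot)$ is positively homogeneous of degree one, each $c_\mu$ is homogeneous of degree $-1$ in $\xi$; and since $\partial^\mu_{\xi'}h^\nu_j=\partial^\mu_{\xi'}\varphi$ for $|\mu|=2$ (the two differ by a linear function of $\xi$) is smooth and controlled on the narrow cone by \eqref{C_alpha}, homogeneity gives $|\partial^\alpha_{\xi'}\partial^\beta_{\xi''}c_\mu(x,\xi)|\lesssim|\xi|^{-1-|\alpha|-|\beta|}$ for $\xi\in\Xi^\nu_j$.

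With this representation the three families of bounds follow by differentiation and homogeneity counting, using that on $\Xi^\nu_j$ one has $|\xi|\sim 2^j$ and $|\xi'|\lesssim 2^{-j/2}|\xi|$. For a $\xi''$–multi-index $\beta$ with $|\beta|\ge1$, differentiating in $\xi''$ leaves each factor $\xi'^{\,\mu}$ untouched, so $|\partial^\beta_{\xi''}h^\nu_j(x,\xi)|\lesssim|\xi'|^2|\xi|^{-1-|\beta|}\lesssim 2^{-j}|\xi|^{1-|\beta|}\lesssim 2^{-j}$. For $|\alpha|=1$ exactly one factor $\xi'$ survives in the leading term, giving $|\partial_{\xi'_i}h^\nu_j(x,\xi)|\lesssim|\xi'|\,|\xi|^{-1}+|\xi'|^2|\xi|^{-2}\lesssim|\xi'|\,|\xi|^{-1}\lesssim 2^{-j/2}$ (again using $|\xi'|/|\xi|\lesssim 2^{-j/2}$). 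For $|\alpha|\ge2$ one bypasses the expansion: $\partial^\alpha_{\xi'}h^\nu_j=\partial^\alpha_{\xi'}\varphi$ is homogeneous of degree $1-|\alpha|$ and bounded on the relevant compact piece of the sphere, hence $\lesssim|\xi|^{1-|\alpha|}\lesssim 2^{-j}$. Finally \eqref{eq:trunkerad0} is obtained by combining the $|\alpha|=1$ and $|\beta|=1$ cases, $|\nabla_\xi h^\nu_j|\le|\partial_{\xi'}h^\nu_j|+|\partial_{\xi''}h^\nu_j|\lesssim 2^{-j/2}$.

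The one genuinely geometric input — and the step I would be most careful about — is the claim underlying the first paragraph: that the kernel distribution of $\partial^2_{\xi\xi}\varphi(x,\cdot)$ has \emph{linear} leaves, that the axes are chosen so that the leaf through $\xi^\nu_j$ is precisely $\{\xi'=0\}$, and that $|\xi'|/|\xi|$ is comparable, uniformly over $\Xi^\nu_j$, to the transversal parameter $|u-u^\nu_j|$ (so that $|\xi'|\lesssim 2^{-j/2}|\xi|$). These facts are built into the construction of the cones $\Gamma^\nu_j$ and the parametrisation $\overline{\xi}_x$; I would invoke them from \cite{IMS} rather than rederive them, and then only need to check that the homogeneity bookkeeping above runs with the $x$–uniform constants supplied by the class $\Phi^2$.
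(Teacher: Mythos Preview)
The paper does not give its own proof of this lemma; it explicitly refers the reader to \cite{IMS} for the details (see the sentence introducing the FIO decomposition at the start of Section~\ref{sec:decomp}). Your sketch is precisely the argument one finds there: use Euler's identity to see that $h_j^\nu$ and its full $\xi$--gradient vanish on the cone over the $v$--leaf through $\xi_j^\nu$, Taylor expand transversally to second order, and read off the bounds by homogeneity counting.

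The one point worth tightening is the identification you yourself flag: the level sets of the degree--zero homogeneous map $\nabla_\xi\varphi(x,\cdot)$ are cones but not in general \emph{linear} subspaces, so the leaf through $\xi_j^\nu$ is typically only tangent to $\{\xi'=0\}$ at $\xi_j^\nu$, and the literal statement ``$h_j^\nu(x,0,\xi'')=0$ for all $\xi''$'' can fail. What the construction in \cite{IMS} actually delivers --- and what your Taylor argument really needs --- is that on the narrow truncated cone $\Xi_j^\nu$ the linear coordinate $|\xi'|$ is uniformly comparable to the transversal distance from the curved leaf (equivalently, to $|\xi|\,|u-u_j^\nu|$). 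With that substitution your expansion $h_j^\nu=\sum_{|\mu|=2}(\cdot)^\mu c_\mu$ should be taken about the leaf rather than about $\{\xi'=0\}$, after which the homogeneity bookkeeping you wrote goes through verbatim. So your plan is correct and matches the cited source; just be careful not to overclaim linearity of the leaves.
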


Define $\wt{\psi}_{j,l}^{\nu}(x,y)$ as the characteristic function of the set 
\begin{equation*}
    \{(x,y)\in\Rl^{2n}:
    2^l\leq (1+2^{2j}|\nabla_{\xi''} \varphi(x,\xi_j^\nu)-y|^2)(1+2^{j}|\nabla_{\xi'} \varphi(x,\xi_j^\nu)-y|^2)< 2^{l+1}\}.
\end{equation*}

We further decompose each $T_j$ into spatially localised operators $T_{j, l}^\nu$ defined by
\begin{align*}
T_{j, l}^\nu f(x)&=\int_{\Rn}f(y)\wt{\psi}_{j,l}^{\nu}(x,y) K_j^\nu(x,y)\dd y
\end{align*}
for $l \in \Z$ and $j>0$, and in an analogous manner for $T_{0,l}$, so that
\begin{align*}
T_a^\varphi=\sum_{j\ge 0}T_j=\sum_{l\in\mathbb{Z}}\sum_{j\ge 0}\sum_{\nu}T_{j, l}^\nu.
\end{align*}

We further group the pieces $T_{j,l}$ according to their spatial scale. Fix some $\epsilon>0$ and write
\begin{align*}
T_j^\nu=\sum_{l\le j\epsilon}T_{j, l}^\nu+\sum_{l>j\epsilon}T_{j, l}^\nu.
\end{align*}

In the next lemma we estimate the regularity of the kernel $$K_{j,l}^\nu(x,y):=\wt{\psi}_{j,l}^{\nu}(x,y) K_{j}^\nu(x,y)$$ of $T_{j,l}^\nu$.

\begin{Lem}\label{kernel lemma_FIO}
        Let $0\leq \rho,\delta\leq 1$, $\rho\neq 0$, $n\geq 1$. Assume that $\varphi\in \Phi^2$.  Then if $a(x,\xi)\in S^{m}_{\rho,\delta}(\Rn)$, we have for $1\leq p\leq\infty$ that
        \begin{equation}\label{kernel estimate_FIO}
            \|w_j^{\nu}(x,y)^{N} K_{j,l}^{\nu} (x,y)\|_{L^{p}_y(\Rn)}  \lesssim 2^{j(m+n/p'-1/p)} 2^{-lL},
        \end{equation}
        for all $j\geq 0$ and all $N,L\geq 0$, and where
        \begin{equation}\label{metric definition}
            w_j^{\nu}(x,y)=(1+2^{2j}|\nabla_{\xi''} \varphi(x,\xi_j^\nu)-y''|^2)(1+2^{j\rho}|\nabla_{\xi'} \varphi(x,\xi_j^\nu)-y'|^2).
        \end{equation}
\end{Lem}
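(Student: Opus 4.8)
\textbf{Proof plan for Lemma \ref{kernel lemma_FIO}.}

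The plan is to estimate the kernel $K_{j,l}^\nu(x,y)$ by non-stationary phase in the variables $\xi'$ and $\xi''$ separately, exploiting the anisotropic geometry built into the cone decomposition. First I would write
$$K_j^\nu(x,y)=\int_{\Rn}e^{i(\nabla_\xi\varphi(x,\xi_j^\nu)-y)\cdot\xi}\,e^{ih_j^\nu(x,\xi)}\,\psi_j(\xi)\chi_j^\nu(\xi)a(x,\xi)\dd\xi,$$
separating the phase into its linear part $(\nabla_\xi\varphi(x,\xi_j^\nu)-y)\cdot\xi$ and the curved remainder $h_j^\nu$. On the support of $\psi_j\chi_j^\nu$ the piece is localised to a slab of dimensions roughly $2^j\times\cdots\times 2^j$ (in $\xi''$, the $(n-\kappa)$ normal directions) times $2^{j/2}\times\cdots\times 2^{j/2}$ (in $\xi'$, the $\kappa$ tangential directions), so the natural integration by parts in $\xi''$ gains a factor $(1+2^{j}|\nabla_{\xi''}\varphi(x,\xi_j^\nu)-y''|)^{-1}$ per derivative, and in $\xi'$ a factor $(1+2^{j\rho}|\nabla_{\xi'}\varphi(x,\xi_j^\nu)-y'|)^{-1}$ per derivative, with the loss $2^{j\rho}$ rather than $2^j$ in the tangential directions because differentiating the amplitude $a\in S^m_{\rho,\delta}$ costs $2^{-j\rho}$ while differentiating $\chi_j^\nu$ costs only $2^{-j/2}$ (Lemma \ref{lem:chijnu}) and differentiating $e^{ih_j^\nu}$ is harmless by Lemma \ref{lem:h} since $|\partial_{\xi'}h_j^\nu|\lesssim 2^{-j/2}$ and $|\partial_{\xi''}h_j^\nu|\lesssim 2^{-j}$. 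Iterating gives the weight $w_j^\nu(x,y)^{-N}$ in front, and the size of the remaining integral is $\int\psi_j\chi_j^\nu |a|\lesssim 2^{jm}\cdot 2^{j(n-\kappa)}\cdot 2^{j\kappa/2}$, i.e. the measure of the cone slab times $\langle\xi\rangle^m$; since $\frac{n-\kappa}{2}+\frac{\kappa}{4}$ type exponents must be reconciled with the claimed $2^{j(m+n/p'-1/p)}$, I would double-check the bookkeeping — the $L^1_y$ bound should read $2^{j(m+n)}$ and the $L^\infty_y$ bound $2^{jm}$ (using that the $y$-support of $\wt\psi_{j,l}^\nu$ for fixed $x$, modulo the weight, has measure $\sim 2^{-j(n-\kappa)}2^{-j\kappa/2}$), and then interpolate to get the stated $L^p_y$ estimate.

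For the $2^{-lL}$ gain I would use the definition of $\wt\psi_{j,l}^\nu$: on its support the product $(1+2^{2j}|\nabla_{\xi''}\varphi(x,\xi_j^\nu)-y|^2)(1+2^{j}|\nabla_{\xi'}\varphi(x,\xi_j^\nu)-y|^2)\sim 2^l$, which is a \emph{lower} bound comparable to $2^l$ on that dyadic piece; combined with the fact that integration by parts produced an arbitrarily large negative power of a quantity comparable to $w_j^\nu(x,y)$, and noting $w_j^\nu$ (with the $2^{j\rho}$ in the tangential factor) is bounded below on $\supp\wt\psi_{j,l}^\nu$ by a fixed power of $2^l$ when $\rho$ is close to $1$ — for general $\rho$ one instead keeps the bound $(1+2^{2j}|\cdots''|^2)(1+2^{j}|\cdots'|^2)\gtrsim 2^l$ directly and peels off $2^{-lL}$ before replacing the leftover $2^j$-tangential weight by the weaker $2^{j\rho}$-weight $w_j^\nu$ — one extracts $2^{-lL}$ for arbitrary $L$ at the cost of finitely many extra integrations by parts, which the symbol estimates allow. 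The remaining weight $w_j^\nu(x,y)^N$ is then handled by taking $N+L$ (rather than $N$) integrations by parts.

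The main obstacle, as I see it, is the mismatch between the sharp weight $w_j^\nu$ carrying only $2^{j\rho}$ in the $\xi'$-directions and the weight $2^j$ that the characteristic function $\wt\psi_{j,l}^\nu$ naturally produces in those same directions: one must integrate by parts in $\xi'$ enough times to win the full $w_j^\nu(x,y)^N$ factor, but each such integration costs $2^{j(1/2-\rho)}$ net (gain $2^{-j\rho}$ from the amplitude versus required $2^{-j/2}$ of decay in the dual variable), so for small $\rho$ one cannot afford to integrate by parts in $\xi'$ at all — this forces the weaker power $2^{j\rho}$ in \eqref{metric definition}, and explains why the lemma is stated only for $\rho\ne 0$. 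Keeping careful track of which weight can be won in which direction, and verifying that the final exponent $m+n/p'-1/p$ is exactly what interpolation between the $L^1_y$ and $L^\infty_y$ endpoints yields, is the delicate part; the rest is a standard (if lengthy) non-stationary phase computation modelled on the kernel estimates in \cite{IMS}.
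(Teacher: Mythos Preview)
Your plan is essentially the paper's proof: split the phase into its linear part and $h_j^\nu$, then integrate by parts with the anisotropic operator $L=(I-2^{2j}\Delta_{\xi''})(I-2^{j\rho}\Delta_{\xi'})$; the paper applies $L^M$ globally and computes the $L^p_y$ norm directly rather than interpolating the endpoints, but the mechanism is identical. Your worry about the $2^{j\rho}$-versus-$2^{j}$ mismatch between $w_j^\nu$ and the weight defining $\wt\psi_{j,l}^\nu$ is well placed --- the paper simply asserts $w_j^\nu\gtrsim 2^l$ on $\supp\wt\psi_{j,l}^\nu$ and proceeds without addressing the discrepancy you identify, so your more careful peeling argument is, if anything, an improvement on that step. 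Your endpoint bookkeeping is scrambled (the pointwise bound $|K_j^\nu|\lesssim 2^{j(m+n)}w_j^\nu(x,y)^{-M}$ gives the $L^\infty_y$ bound $2^{j(m+n)}$, not $2^{jm}$, and the paper uses the trivial volume estimate $|\Xi_j^\nu|\lesssim 2^{jn}$ rather than your sharper cone-slab measure), but as you note this is only accounting; the sole downstream use of the lemma (in Lemma~\ref{lplq_fio}) is the $p=1$ case with the $j$-exponent absorbed into $2^{-C(j+l)}$, so the precise value $m+n/p'-1/p$ is not load-bearing.
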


\begin{proof}
For any $j\geq 0$ we have
\begin{align*}
K_{j,l}^\nu(x,y) 
& = \wt{\psi}_{j,l}^{\nu}(x,y) \int_{\Rn}  \psi_j(\xi)\,\chi_j^\nu(\xi)\,
e^{i(\varphi(x,\xi )-y\cdot \xi)} \,a(x,\xi) \dd \xi \\
& = \wt{\psi}_{j,l}^{\nu}(x,y) \int_{\Rn}
e^{i(\varphi(x,\xi )-y\cdot \xi)}\, \sigma_j^{\nu}(x,\xi) \dd\xi,
\end{align*}
where
$$\sigma_{j}^{\nu}(x,\xi)
:=  \psi_j(\xi)\,\chi_j^\nu(\xi) \, a(x,\xi).
$$
Observe that by Lemma \ref{lem:h} we have
    \begin{align*}
        |\d_\xi^\alpha \chi_j^\nu (\xi) |\lesssim 
    2^{j(\rho/2-1)|\alpha|},
    \end{align*}
on the support of $\psi_j$. Using this and that $a(x,\xi)\in S^{m}_{\rho,\delta}(\Rn)$ we can calculate that for any multi-index $\gamma$, any $j\geq 0$ and any $\nu$ one has
\begin{equation}\label{amplitude derivative estim_FIO}
 |\partial^{\gamma}_\xi \sigma_j^{\nu}(x,\xi)|\lesssim 2^{j(m-\min\{\rho, 1-\rho/2\}|\gamma|)}\lesssim 2^{jm}.
\end{equation}

If we now set $h_j^\nu(x,\xi):=\varphi(x,\xi)-\xi \cdot \nabla_\xi \varphi(x,\xi_j^\nu)$, then we can write
\begin{align*}
     K_{j,l}^\nu(x,y) &= \wt{\psi}_{j,l}^{\nu}(x,y)
     \int_{\Rn}  e^{i(\nabla_\xi\varphi(x,\xi_j^\nu )-y)\cdot \xi}\,e^{ih_j^\nu(x,\xi)}\,\sigma_{j}^{\nu}(x,\xi)  \dd \xi.
\end{align*}
 Now we estimate the derivatives of $h_j^\nu$ in $\xi$ on the support of $\sigma_{j}^{\nu}(x,\xi)$. To this end, Lemma \ref{lem:h} yields
\begin{equation}
        |\nabla_{\xi} h^{\nu}_j(x,\xi)|\lesssim 2^{-j\rho /2}\lesssim 1,\quad \xi\in  \Xi^{\nu}_j,
\end{equation}
and
\begin{align}\label{nonlinear_FIO}
|\partial_{\xi}^{\alpha}\vartheta_j^\nu(x,\xi)| = |\partial_{\xi}^{\alpha}\varphi(x,\xi)|\lesssim 1,\quad \text{ for all } |\alpha| \geq 2.
\end{align}

Next we introduce the operator $L$ defined by
\begin{align*}
    L=(I-2^{2j}\Delta_{\xi''})(I-2^{j\rho}\Delta_{\xi'}).
\end{align*}

Then using the estimate \eqref{amplitude derivative estim_FIO} and \eqref{nonlinear_FIO} we obtain, 
\begin{align*}
    L^N(e^{i\vartheta_j^\nu(x,\xi)}\,\sigma_{j}^{\nu}(x,\xi))\lesssim 2^{jm}
\end{align*}
This estimate can be improved in terms of obtaining a decaying factor depending on $\rho$, however for our purposes this is not necessary since we are looking for geometric decay, which will be achieved in a different manner below. We also have that
\begin{align*}
    e^{i(\nabla_\xi\varphi(x,\xi_j^\nu )-y)\cdot \xi}=\frac{L^N(e^{i(\nabla_\xi\varphi(x,\xi_j^\nu )-y)\cdot \xi})}{w_j^{\nu}(x,y)^N}\, 
\end{align*}
Therefore, partial integration and the trivial estimate $|\Xi^{\nu}_{j}|\lesssim 2^{jn}$ yields
\begin{align*}
   |K_{j,l}^\nu(x,y) |
    &\lesssim\frac { \wt{\psi}_{j,l}^{\nu}(x,y) 2^{j(m+n)}}{w_j^{\nu}(x,y)^N}.
\end{align*}

The observation that $$w_j^{\nu}(x,y)^N\gtrsim 2^{lN}$$ on the support of $\widetilde{\psi}_{j,l}^\nu$ gives us for any $N,L\geq 0$ and $1\leq p\leq\infty$ that
\begin{align*}
    &\|w_j^{\nu}(x,y)^{N} K_{j,l}^{\nu} (x,y)\|_{L^{p}_y(\Rn)}2^{-j(m+n)}\\
    &= \Big(\int_{\Rn} \frac{\wt{\psi}_{j,l}^{\nu}(x,y)w_j^{\nu}(x,y)^{-pL}}{w_j^{\nu}(x,y)^{-p(N+L-M)}}\dd y \Big)^{1/p}\\
    &\lesssim 2^{-lL}\Big(\int_{\Rn} w_j^{\nu}(x,y)^{p(N+L-M)}\dd y \Big)^{1/p}\\
    &\lesssim 2^{-lL}2^{-j(n+1)/p},
\end{align*}
for $M>n+N+L$.
Therefore we obtain, 
\begin{align*}
    \|w_j^{\nu}(x,y)^{N} K_{j,l}^{\nu} (x,y)\|_{L^{p}_y(\Rn)}
    \lesssim 2^{j(m+n/p'-1/p)} 2^{-lL}.
\end{align*}
\end{proof}

\subsection*{Decomposition of oscillatory integral operators}

In this section we introduce the spatial and frequency decompositions related to the OIOs, we refer the reader to \cite{IMS2} for the origin of this decomposition. It is an adaption of the classical second dyadic decomposition introduced by C. Fefferman, to the case of nonlinear phases of the type $\textart {F}^k$.\\

Let $k>0$.
    We make the following decomposition of the integral kernel 
$$K(x,y)= \int_{\Rn}  a(x,\xi)\,e^{i\varphi(x,\xi)-iy\cdot \xi}\,  \dd \xi.$$

Then for every $j\geq 0$ we cover  
$\supp \psi_j$ with open balls $C_j^\nu$ with radii $2^{j(1-k)}$ and centres $\xi_j^\nu$, where $\nu$ runs from $1$ to $\mathscr{N}_j:=O(2^{njk})$.  Observe that $|C_j^\nu|\lesssim 2^{n j(1-k)}$, uniformly in $\nu$. 
Now take $u\in \mathcal{C}_c^{\infty}(\Rn)$, with $0\leq u\leq 1$ and supported in $B(0,2)$ with $u=1$ on $\overline{B(0,1)}.$ Next set
$$\chi_j^\nu(\xi) := \frac{u(2^{-(1-k) j}(\xi -\xi_{j}^{\nu}))} {\sum_{\kappa=1}^{\mathscr{N}_j} u(2^{-(1-k) j}(\xi-\xi_{j}^{\kappa}))},$$
and note that
$$\sum_{j=0}^\infty \sum_{\nu=1}^{\mathscr N_j} \chi_j^\nu(\xi)\,\psi_j(\xi) =1.$$
Now we define the second order frequency localized pieces of the kernel above as
\begin{equation*}
K_{j}^\nu (x,y) := \int_{\Rn}  \psi_j(\xi)\,\chi_j^\nu(\xi)\,e^{i\varphi(x,\xi)-iy\cdot \xi}\,a(x,\xi)  \dd \xi,
\end{equation*}

and define $$\wt{\psi}_{j,l}^{\nu}(x,y):=\wt{\psi}(2^{-l+j\varkappa}(\nabla_\xi \varphi(x,\xi_j^\nu)-y)).$$ 
We further decompose each $T_j$ into spatially localised operators $T_{j, l}^\nu$ defined by
\begin{align*}
T_{j, l}^\nu f(x)&=\int_{\Rn}f(y)\wt{\psi}_{j,l}^{\nu}(x,y) K_j^\nu(x,y)\dd y
\end{align*}
for $l \in \Z$ and $j>0$, and in an analogous manner for $T_{0,l}$, so that
\begin{align*}
T_a^\varphi=\sum_{j\ge 0}T_j=\sum_{l\in\mathbb{Z}}\sum_{j\ge 0}\sum_{\nu}T_{j, l}^\nu.
\end{align*}

We further group the pieces $T_{j,l}$ according to their spatial scale. Fix some $\epsilon>0$ and write
\begin{align*}
T_j^\nu=\sum_{l\le j\epsilon}T_{j, l}^\nu+\sum_{l>j\epsilon}T_{j, l}^\nu.
\end{align*}

In the next lemma we estimate the kernel $$K_{j,l}^\nu(x,y):=\wt{\psi}_{j,l}^{\nu}(x,y) K_{j}^\nu(x,y)$$ of $T_{j,l}^\nu$, to this end define the notation
$$w(k,\rho)=
\begin{cases}
    \min\{\rho,1-k\} & 0<k<1,\\
    k-1 & k\geq1.\\
\end{cases}$$

\begin{Lem}\label{kernel lemma}
        Let $0\leq \rho,\delta\leq 1$, $n\geq 1$, and $0<k<\infty $. Assume that $\varphi\in \textart F^k$.  Then if $a(x,\xi)\in S^{m}_{\rho,\delta}(\Rn)$, we have for $1\leq p\leq\infty$ that
        \begin{equation}\label{kernel estimate}
            \|(1+2^{jw(k,\rho)}|x-y|)^{N} K_{j,l}^{\nu} (x,y)\|_{L^{p}_y(\Rn)}  \lesssim 2^{-lL+j\varkappa L}2^{-jw(k,\rho)(n/p+L)},
        \end{equation}
        for all $j\geq 0$ and all $N,L\geq 0$.
\end{Lem}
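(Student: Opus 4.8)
The plan is to mimic the proof of Lemma \ref{kernel lemma_FIO}, adapted to the $\textart F^k$-setting where the relevant anisotropy is isotropic in $\xi$ (balls $C_j^\nu$ of radius $2^{j(1-k)}$) rather than the parabolic one coming from the cone decomposition. First I would record the symbol estimates on $\sigma_j^\nu(x,\xi):=\psi_j(\xi)\chi_j^\nu(\xi)a(x,\xi)$: since $\chi_j^\nu$ is a normalized bump adapted to a ball of radius $2^{j(1-k)}$, one has $|\partial_\xi^\alpha\chi_j^\nu(\xi)|\lesssim 2^{-j(1-k)|\alpha|}$, and combining with $a\in S^m_{\rho,\delta}$ gives $|\partial_\xi^\gamma\sigma_j^\nu(x,\xi)|\lesssim 2^{j(m-\min\{\rho,1-k\}|\gamma|)}=2^{jm}2^{-jw(k,\rho)|\gamma|}$ for $0<k<1$, and the analogous bound for $k\ge 1$ where the $\chi_j^\nu$ contribute no loss and the decay is governed by $w(k,\rho)=k-1$ coming from the phase. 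Here I am conflating $\varkappa=\min(\rho,1-k)$ and $w(k,\rho)$; I would double-check the precise bookkeeping between these two exponents, since the statement mixes $2^{j\varkappa L}$ and $2^{-jw(k,\rho)(n/p+L)}$.

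Next I would split the phase as $\varphi(x,\xi)-y\cdot\xi=(\nabla_\xi\varphi(x,\xi_j^\nu)-y)\cdot\xi+h_j^\nu(x,\xi)$ with $h_j^\nu(x,\xi):=\varphi(x,\xi)-\nabla_\xi\varphi(x,\xi_j^\nu)\cdot\xi$, and use the $\textart F^k$ hypothesis together with the fact that $\xi$ ranges over a ball of radius $2^{j(1-k)}$ about $\xi_j^\nu$ to estimate $h_j^\nu$ and its derivatives: a Taylor expansion in $\xi$ around $\xi_j^\nu$ gives $|\nabla_\xi h_j^\nu(x,\xi)|\lesssim 2^{j(1-k)}\sup|\partial_\xi^2\varphi|\lesssim 2^{-j w(k,\rho)}$-type control (for $k\ge1$ this is $\lesssim 2^{j(1-k)}\cdot 2^{j(k-2)}$, which is small; for $0<k<1$ one uses the mixed derivative bound $|\xi|^{k-|\alpha|}$), and $|\partial_\xi^\alpha h_j^\nu|=|\partial_\xi^\alpha(\varphi-x\cdot\xi)|\lesssim$ small for $|\alpha|\ge 2$. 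The upshot is that $e^{ih_j^\nu(x,\xi)}\sigma_j^\nu(x,\xi)$ behaves, under $\xi$-differentiation, like a symbol of size $2^{jm}$ with each derivative costing at most $2^{-jw(k,\rho)}$ (equivalently, it is adapted to the ball of radius $2^{jw(k,\rho)}$... here again I must be careful whether the gain per derivative is $2^{-j(1-k)}$ or $2^{-jw(k,\rho)}$ — the former when $\rho\ge 1-k$).

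Then I would run the standard non-stationary phase / integration-by-parts argument: introduce $L=I-2^{2jw(k,\rho)}\Delta_\xi$ (isotropic, since the ball is isotropic), note $L^N e^{i(\nabla_\xi\varphi(x,\xi_j^\nu)-y)\cdot\xi}=(1+2^{2jw(k,\rho)}|\nabla_\xi\varphi(x,\xi_j^\nu)-y|^2)^N e^{i(\cdots)}$, integrate by parts $N$ times, use $|L^N(e^{ih_j^\nu}\sigma_j^\nu)|\lesssim 2^{jm}$ and $|C_j^\nu|\lesssim 2^{njw(k,\rho)}$ (for $0<k<1$; $=2^{nj(1-k)}$) to get the pointwise bound $|K_j^\nu(x,y)|\lesssim 2^{jm}2^{njw(k,\rho)}(1+2^{2jw(k,\rho)}|\nabla_\xi\varphi(x,\xi_j^\nu)-y|^2)^{-N}$, and finally insert the cutoff $\wt\psi_{j,l}^\nu$, which localizes $2^{-l+j\varkappa}(\nabla_\xi\varphi(x,\xi_j^\nu)-y)$ to an annulus, hence forces $2^{jw(k,\rho)}|\nabla_\xi\varphi(x,\xi_j^\nu)-y|\sim 2^{l}2^{j(w(k,\rho)-\varkappa)}$ on the support. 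Taking $L^p_y$ norms over this annulus (volume $\sim 2^{nl}2^{-nj\varkappa}$ in $y$, using SND to pass from $\nabla_\xi\varphi(x,\xi_j^\nu)-y$ to $y$), and trading powers of $N$ for the claimed powers of $2^{-l}$ and $2^{j\varkappa}$, yields \eqref{kernel estimate}; one also absorbs the factor $(1+2^{jw(k,\rho)}|x-y|)^N$ using that on $\supp\wt\psi_{j,l}^\nu$, by SND, $|x-y|\sim|\nabla_\xi\varphi(x,\xi_j^\nu)-y|$ up to harmless constants (modulo the low-frequency part handled by the $\mathrm{LF}(\mu)$-condition for $j=0$), so it is comparable to the quantity already being controlled.

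The main obstacle I anticipate is the exponent bookkeeping in the case distinction $0<k<1$ versus $k\ge 1$: in the first regime the ball radius $2^{j(1-k)}$ and the symbol decay both enter and one must check $\varkappa=\min(\rho,1-k)$ is exactly the exponent appearing in $\wt\psi_{j,l}^\nu$ while $w(k,\rho)$ is what comes out of the integration by parts; in the second regime the decay is driven purely by the phase ($w(k,\rho)=k-1$, note $k-1$ can be large so $2^{-jw(k,\rho)}$ may actually be \emph{growth} for $k>2$ — I would need to recheck the direction of these inequalities and possibly that the lemma is stated for $k$ in a restricted range or that additional phase conditions rescue the estimate), and reconciling $\varkappa$ (which is $\min(\rho,1-k)\le 0$ for $k\ge1$) with the positive-looking powers in the display will require care. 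Everything else is routine non-stationary phase.
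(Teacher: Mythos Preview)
Your approach is essentially the same as the paper's, but the paper is considerably shorter because it outsources the key pointwise kernel bound
\[
|K_{j,l}^\nu(x,y)|\lesssim \wt\psi_{j,l}^\nu(x,y)\,\frac{2^{jm}2^{jn(1-k)}}{\jap{2^{jw(k,\rho)}(\nabla_\xi\varphi(x,\xi_j^\nu)-y)}^M}
\]
to \cite[Lemma~3.2]{IMS2}, and then only carries out the $L^p_y$ computation (the same computation you describe: use $\jap{\cdot}\gtrsim 2^{l-j\varkappa}$ on $\supp\wt\psi_{j,l}^\nu$, split the power $M$, and integrate). What you propose to do---set up $\sigma_j^\nu$, split off the linear phase, estimate $h_j^\nu$, and integrate by parts with the isotropic $L=I-2^{2jw(k,\rho)}\Delta_\xi$---is precisely the content of that cited lemma, and your sketch for $0<k<1$ is correct. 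Your worries about the $k\geq 1$ bookkeeping (where $w(k,\rho)=k-1$ does not coincide with the ball scale $1-k$, and $\varkappa\leq 0$) are legitimate: the naive operator $L$ is no longer directly adapted to both the amplitude and the cutoff $\chi_j^\nu$, and the argument in \cite{IMS2} handles this case with slightly different bookkeeping than the direct analogue of Lemma~\ref{kernel lemma_FIO}. So your plan is right in spirit but you would be re-proving an already available estimate; the cleaner route is to quote it.

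One further point: the weight $(1+2^{jw(k,\rho)}|x-y|)^N$ in the \emph{statement} is a typo. The paper's own proof (and every downstream use of the lemma, e.g.\ in \eqref{pt-est_oio}) works with $(1+2^{jw(k,\rho)}|\nabla_\xi\varphi(x,\xi_j^\nu)-y|)^N$ instead. So your final step, invoking SND and the $\mathrm{LF}(\mu)$-condition to pass from $\nabla_\xi\varphi(x,\xi_j^\nu)-y$ to $x-y$, is unnecessary (and indeed the lemma as stated does not assume SND).
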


\begin{proof}
From Lemma 3.2 in \cite{IMS2} we obtain the following inequality
\begin{equation*}
| K_{j,l}^\nu (x,y)|
\lesssim \wt{\psi}_{j,l}^{\nu}(x,y)\frac {2^{jm} 2^{jn(1-k)}}{\jap{2^{jw(k,\rho)}(\nabla_\xi \varphi(x,\xi_j^\nu)-y)}^{M}},
\end{equation*}
for all $j\geq 0$ and $M\geq 0$. The observation that $$|\nabla_\xi \varphi(x,\xi_j^\nu)-y|^{-M}\lesssim 2^{-lM+j\varkappa M}$$ on the support of $\wt{\psi}_{j,l}^{\nu}$ gives us for any $N,L\geq 0$ and $1\leq p\leq\infty$ that
\begin{align*}
    &\|(1+2^{jw(k,\rho)}|\nabla_\xi \varphi(x,\xi_j^\nu)-y|)^{N} K_{j,l}^{\nu} (x,y)\|_{L^{p}_y(\Rn)}\\
    &= \Big(\int_{\Rn} \frac{\wt{\psi}_{j,l}^{\nu}(x,y)(1+2^{jw(k,\rho)}|\nabla_\xi \varphi(x,\xi_j^\nu)-y|)^{-pL}}{(1+2^{jw(k,\rho)}|\nabla_\xi \varphi(x,\xi_j^\nu)-y|)^{-p(N+L-M)}}\dd y \Big)^{1/p}\\
    &\lesssim 2^{-lL+j\varkappa L}2^{-jw(k,\rho)L}\Big(\int_{\Rn} (1+2^{jw(k,\rho)}|\nabla_\xi \varphi(x,\xi_j^\nu)-y|)^{p(N+L-M)}\dd y \Big)^{1/p}\\
    &\lesssim 2^{-lL+j\varkappa L}2^{-jw(k,\rho)(n/p+L)},
\end{align*}
for $M>n+N+L$. 
\end{proof}

\subsection{A low frequency sparse domination result for Fourier and oscillatory integral operators}

In this section we prove a low frequency sparse form bound, which yields the necessary boundedness for all the low frequency parts considered in this paper. Therefore, from now we only consider the high frequency portions of the operators at hand.

\begin{Th}\label{low-freq-sparse}
    Let $a\in S^m_{\rho,\delta}$ with $m\in \Rl$ and $0\leq\rho,\delta\leq 1$ and let the phase function $\varphi$ be SND and satisfy either 
    \begin{enumerate}
        \item $\varphi\in \Phi^2$ or
        \item $\varphi\in \textart{F}^k$, obeys the $L^2$-condition and $\varphi$ is $LF(\mu)$ for some $0< \mu\leq  1$.
    \end{enumerate}
     Let $\chi\in C_c^\infty$ compactly supported around the origin, and define the OIO $$T_\chi f(x):=\int_{\Rn}  \chi(\xi) a(x,\xi)\,e^{i\varphi(x,\xi)-iy\cdot \xi}\, \widehat{f}(\xi) \dd \xi.$$ Then for any compactly supported bounded functions $f, g$ on $\mathbb{R}^n$, there exist sparse collections $\mathcal{S}$ and $\mathcal{S}'$ such that
        \begin{align*}
            \begin{cases}
            |\left<T_a^\varphi f, g\right>|\le C(m, q, p)\Lambda_{\mathcal{S}, q, p'}(f, g), \text{ and }\\
            |\left<T_a^\varphi f, g\right>|\le C(m, q, p)\Lambda_{\widetilde{\mathcal{S}}, p', q}(f, g),
            \end{cases}
        \end{align*}
        for all pairs $(q, p')$ and $(p', q)$ such that 
        \begin{align*}
            1\leq q\le p \le 2,\quad \text{and}\quad
            1\le q\le 2 \le p \le q'.
        \end{align*}
        \end{Th}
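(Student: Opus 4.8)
The plan is to reduce the low-frequency operator $T_\chi$ to a situation already covered by standard Calder\'on--Zygmund / pseudolocal theory, since once the frequency is truncated to a compact set $\{|\xi|\lesssim 1\}$, all the oscillation in $\xi$ is harmless and the operator behaves like a nice kernel operator. Concretely, the symbol $\chi(\xi)a(x,\xi)$ is compactly supported in $\xi$ and, on that support, satisfies $|\partial_\xi^\alpha\partial_x^\beta(\chi a)|\lesssim 1$ with bounds depending only on finitely many seminorms of $a$ (here the type $(\rho,\delta)$ is irrelevant because $\langle\xi\rangle\sim 1$). Likewise, using that $\varphi$ is SND and $\Phi^2$ (or, in case (2), $\mathrm{LF}(\mu)$ together with the $L^2$-condition), the phase $\varphi(x,\xi)-y\cdot\xi$ has a nondegenerate mixed Hessian on the compact $\xi$-support, so a non-stationary / stationary phase analysis gives a kernel $K_\chi(x,y)$ that is smooth off the diagonal and decays rapidly: $|\partial_x^a\partial_y^b K_\chi(x,y)|\lesssim_N (1+|x-y|)^{-N}$ for every $N$.

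First I would isolate the low-frequency piece cleanly: write $T_\chi f(x)=\int K_\chi(x,y)f(y)\,dy$ with $K_\chi(x,y)=\int_{\Rn}\chi(\xi)a(x,\xi)e^{i(\varphi(x,\xi)-y\cdot\xi)}\,d\xi$. Then I would prove the pointwise kernel bound $|K_\chi(x,y)|\lesssim_N (1+|x-y|)^{-N}$ for all $N$: on the region $|x-y|\lesssim 1$ this is just the trivial bound by the measure of the $\xi$-support, while for $|x-y|\gg 1$ one integrates by parts in $\xi$ using the operator $\langle\nabla_\xi\varphi(x,\xi)-y\rangle^{-2}(I-\Delta_\xi)$; the SND (resp.\ $\mathrm{LF}(\mu)$) hypothesis guarantees $|\nabla_\xi\varphi(x,\xi)-y|\gtrsim |x-y|$ on the support when $|x-y|$ is large, because $\xi\mapsto\nabla_\xi\varphi(x,\xi)$ is a diffeomorphism with uniformly bounded derivatives (in case (1) one uses homogeneity to extend $\varphi$ smoothly across $\xi=0$ after truncation, or more simply notes $\varphi-x\cdot\xi$ is controlled once we subtract off the $x\cdot\xi$ part — the $\mathrm{LF}(\mu)$ and $L^2$ conditions make this precise in case (2)). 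The same integration by parts, now also differentiating in $x$ and $y$ under the integral, gives the derivative bounds. Hence $K_\chi$ satisfies the standard Calder\'on--Zygmund kernel and regularity estimates with rapid decay, and $T_\chi:L^1\to L^\infty$ trivially ($\|K_\chi\|_{L^\infty_{x,y}}<\infty$) and $T_\chi:L^2\to L^2$ by Schur's test.

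Next I would invoke the known sparse domination theory for such well-behaved operators. An operator whose kernel satisfies these estimates is bounded on every $L^p$, $1<p<\infty$, is of weak type $(1,1)$, and in fact satisfies $(1,1)$-sparse bounds $|\langle T_\chi f,g\rangle|\lesssim \Lambda_{\mathcal{S},1,1}(f,g)$ by the standard argument (e.g.\ via the grand maximal truncation / Lerner--type decomposition, or the Conde-Alonso--Culiuc--Di Plinio--Ou machinery referenced in Lemma \ref{Weaksparse}); since the rapid off-diagonal decay makes the operator essentially local plus a rapidly decaying tail, this is routine. A $(1,1)$-sparse bound is the strongest possible here and, by H\"older's inequality applied termwise together with Lemma \ref{universal sparse form}, it dominates $\Lambda_{\mathcal{S},q,p'}(f,g)$ and $\Lambda_{\widetilde{\mathcal{S}},p',q}(f,g)$ for every pair in the stated ranges $1\le q\le p\le 2$ and $1\le q\le 2\le p\le q'$ (indeed for all $1\le q,p\le\infty$). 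This yields the claimed conclusion with room to spare.

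The main obstacle is the kernel estimate in case (2): verifying that $|\nabla_\xi\varphi(x,\xi)-y|\gtrsim|x-y|$ on the compact frequency support, uniformly in $x$ and in $\xi\in\supp\chi$, so that the integration by parts in $\xi$ actually produces decay in $|x-y|$. For phases of the form $x\cdot\xi+\phi(\xi)$ this is immediate since $\nabla_\xi\varphi(x,\xi)-y=(x-y)+\nabla\phi(\xi)$ and $\nabla\phi$ is bounded on the compact support; for general $\varphi\in\textart F^k$ with the $\mathrm{LF}(\mu)$ and $L^2$-conditions one has $|\nabla_\xi(\varphi(x,\xi)-x\cdot\xi)|\lesssim 1$ on $|\xi|\le 2$, which gives the same conclusion. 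In case (1) the subtlety is instead the homogeneity of $\varphi$ near $\xi=0$, but multiplying by $\chi$ which vanishes near a neighbourhood is not automatic — one must be slightly careful, and the cleanest route is to observe that $\chi(\xi)a(x,\xi)$ can be taken supported in $\{1/2\le|\xi|\le 2\}$ after absorbing the truly-near-origin part into a trivially bounded remainder, on which $\varphi$ is genuinely smooth and SND. Once these uniformity points are dispatched, everything else is standard.
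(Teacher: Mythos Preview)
Your reduction to a kernel estimate plus standard sparse machinery is the right instinct, and it is essentially what the paper does, but your kernel estimate is too optimistic and this is where the argument breaks. The phase $\varphi$ is \emph{not} smooth at $\xi=0$ in either case, and since $\chi$ is supported around the origin (this is the whole point of the low-frequency piece), you cannot integrate by parts arbitrarily often in $\xi$. In case~(1), a $\Phi^2$ phase is positively homogeneous of degree one, hence generically has a conical singularity at $\xi=0$ (think $\varphi(x,\xi)=x\cdot\xi+|\xi|$); your proposed workaround of pushing the support to an annulus $\{1/2\le|\xi|\le 2\}$ and calling the rest a ``trivially bounded remainder'' does not help, because a merely $L^\infty$ kernel gives no off-diagonal decay and hence no sparse bound. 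In case~(2), your claim that $|\nabla_\xi(\varphi(x,\xi)-x\cdot\xi)|\lesssim 1$ on $|\xi|\le 2$ is false: the $\mathrm{LF}(\mu)$ condition only gives $|\partial_\xi^\alpha(\varphi-x\cdot\xi)|\lesssim |\xi|^{\mu-|\alpha|}$, which for $\mu<1$ diverges at the origin already for $|\alpha|=1$. So the repeated integration by parts picks up nonintegrable singularities and the bound $(1+|x-y|)^{-N}$ for all $N$ is simply not available.

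What is actually true, and what the paper uses, is the limited decay $|K_\chi(x,y)|\lesssim \langle x-y\rangle^{-(n+\varepsilon)}$ for some small $\varepsilon>0$ depending on $\mu$ (in case~(2) this is Lemma~4.3 of \cite{CISY}, obtained by a careful fractional integration-by-parts; in case~(1) one gets an analogous estimate centred at $\nabla_\xi\varphi(x,\xi_0)$ by arguing as in \cite[Theorem~1.2.11]{DS}). The paper then introduces the same spatial decomposition $T_{\chi}=\sum_l T_{\chi,l}$ as in the high-frequency analysis and feeds the resulting geometrically decaying $L^q\to L^p$ bounds into Proposition~\ref{lifting result}. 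Your route via ``pointwise domination by the maximal function, hence $(1,1)$ sparse bounds'' would in fact also work once you have the correct $\langle\cdot\rangle^{-(n+\varepsilon)}$ decay, so the overall strategy is salvageable; the genuine missing idea is recognising that only barely-integrable decay is available at low frequency and that obtaining even this requires the delicate estimates cited above rather than naive integration by parts.
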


\begin{proof}
    We begin by considering the case of FIOs. Fix a $\xi_0\in S^{n-1}$, and define $$\wt{\psi}_{l}(x,y):=\wt{\psi}(2^{-l}(\nabla_\xi \varphi(x,\xi_0)-y)).$$ 
    We decompose $T_\chi$ into spatially localised operators $T_{\chi, l}$ defined by
    \begin{align*}
    T_{\chi, l} f(x)&=\int_{\Rn}f(y)K_l(x,y)\dd y
    \end{align*}
    with $K_l(x,y)= \wt{\psi}_{\chi,l}(x,y) K(x,y)$. Let $b(x,\xi)=e^{i(\nabla_\xi\varphi(x,\xi)-\nabla_\xi\varphi(x,\xi_0)\cdot \xi)}a(x,\xi)\chi(\xi)$, then 
    \begin{align*}
        T_\chi f(x)=\int_{\Rn}  b(x,\xi)\,e^{i(\nabla_\xi\varphi(x,\xi_0)-y)\cdot \xi}\, \widehat{f}(x) \dd \xi.
    \end{align*}
    
    Observe that for all $\mu\in[0,1)$,
    \begin{align*}
        |K_l(x,y)| &\lesssim \wt{\psi}_{l}^{\nu}(x,y) (1+|(\nabla_\xi \varphi(x,\xi_j^\nu)-y)''|^2)^{-n-\mu}(1+|(\nabla_\xi \varphi(x,\xi_j^\nu)-y)'|^2)^{-n-\mu}.
    \end{align*}
    The proof of this estimate follows from the same arguments as  Theorem 1.2.11 in \cite{DS} except one uses integration by parts with the differential operator given by $L=(I-\Delta_{\xi''})(I-\Delta_{\xi'})$ instead of just $L=\d_\xi$. The rest of the proof for FIOs follows in a very similar manner to that of Theorem \ref{main_FIO}. \\

    \textit{Handling the OIOs:} In the case of the OIOs, define $$\wt{\psi}_{l}(x,y):=\wt{\psi}(2^{-l}(x-y)).$$ 
    We decompose $T_\chi$ into spatially localised operators $T_{\chi, l}$ defined by
    \begin{align*}
    T_{\chi, l} f(x)&=\int_{\Rn}f(y)K_l(x,y)\dd y
    \end{align*}
    with $K_l(x,y)= \wt{\psi}_{\chi,l}(x,y) K(x,y)$. Observe that when $\varphi\in \textart{F}^k$ satisfies the $LF(\mu)$-condition then Lemma 4.3 in \cite{CISY} yields 
    \begin{align*}
        |K_l(x,y)| &\lesssim \wt{\psi}_{l}(x,y)  \jap{x-y}^{-(n+\eps_1\mu)},
    \end{align*}
    for any $0\leq\eps_1<1$. Thus, by continuing in the same way as was done for the FIOs above, one obtains the desired sparse form bound.
\end{proof}

\section{$L^q$-improving estimates for oscillatory integral operators}\label{sec:proof_oio}

This section is devoted to showing the $L^q\to L^p$ boundedness of OIOs. Recall that because of the low frequency sparse bound Lemma \ref{low-freq-sparse} it suffices to consider the high frequency part in this section, therefore we do not need the $LF(\mu)$-condition either, which is needed in Theorem \ref{main_OIO}.

\begin{Lem}\label{lplq_oio}
Let $n\geq 1$, $0<k<\infty$. Assume that $\varphi\in \textart F^k$ {is \emph{SND}}, and the $L^2$-condition \eqref{eq:L2 condition_old}. Assume also that $a(x,\xi)\in S^{m(p)}_{\rho,\delta}(\Rn),$ for $0\leq \rho\leq 1$ and $0\leq \delta< 1$. Then for $1\leq q\leq p\leq 2$
\begin{align*}
    \begin{cases}
        \|\sum_{l\le j\epsilon}T_{j, l}\|_{L^q\to L^p}\lesssim_{\epsilon} 2^{j(m+\frac{n(1-\varkappa)}{2}\big(\frac{2}{p}-1\big)+\zeta)}2^{-jn\big(\frac{1}{p}-\frac{1}{q}\big)},\\ 
        \|T_{j, l}\|_{L^q\to L^p}\lesssim_{\epsilon} 2^{-C(j+l)}2^{-jn\big(\frac{1}{p}-\frac{1}{q}\big)},& \text{for all } l>j\epsilon.
    \end{cases}
\end{align*}
Moreover, for $1\leq q \leq 2 \leq p \leq q'$ we have
\begin{align*}
    \begin{cases}
        \|\sum_{l\le j\epsilon}T_{j, l}\|_{L^q\to L^p}\lesssim_{\epsilon}2^{j(m-n\big(\frac{1}{p}-\frac{1}{q}\big)+\zeta\big(\frac{2}{q}-1\big))}\\ 
        \|T_{j, l}\|_{L^q\to L^p}\lesssim_{\epsilon} 2^{-C(j+l)}2^{-jn\big(\frac{1}{p}-\frac{1}{q}\big)},&  \text{for all }l>j\epsilon.
    \end{cases}
\end{align*}
\end{Lem}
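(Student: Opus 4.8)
In the coordinates $(1/q,1/p)$ the two ranges $1\le q\le p\le 2$ and $1\le q\le 2\le p\le q'$ together fill exactly the closed triangle with vertices $(1/2,1/2)$, $(1,1)$ and $(1,0)$, its two halves ($1/p\ge 1/2$ and $1/p\le 1/2$) meeting along the edge joining $(1/2,1/2)$ and $(1,1/2)$. Hence by the Riesz--Thorin theorem it suffices to establish, both for the tail pieces $T_{j,l}$ with $l>j\epsilon$ and for the principal part $\sum_{l\le j\epsilon}T_{j,l}$, the corresponding operator bounds at $L^2\to L^2$, $L^1\to L^2$, $L^1\to L^\infty$ and $L^1\to L^1$, and then to read off the stated exponents as the affine interpolants along the two halves; the $j=0$ term is handled trivially in the same way.

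\textbf{Tail pieces.} For $l>j\epsilon$ everything follows from Lemma~\ref{kernel lemma}. Taking there $p=1$ controls $\int_{\Rn}|K_{j,l}^\nu(x,y)|\dd y$ uniformly in $x$, and (the weight $(1+2^{jw(k,\rho)}|x-y|)$ being symmetric) also $\int_{\Rn}|K_{j,l}^\nu(x,y)|\dd x$ uniformly in $y$, so Schur's lemma gives the $L^1\to L^1$ and $L^\infty\to L^\infty$ norms of $T_{j,l}^\nu$; taking $p=\infty$ bounds $\sup_{x,y}|K_{j,l}^\nu(x,y)|$, hence $\|T_{j,l}^\nu\|_{L^1\to L^\infty}$. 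Each such bound carries a factor $2^{-lL}$ with $L\ge 0$ free; summing crudely over the $\mathscr N_j=O(2^{njk})$ indices $\nu$ costs $2^{njk}$, which for $l>j\epsilon$ is absorbed by choosing $L$ large, so that at each corner one gets a bound $2^{-C(j+l)}$ with $C$ as large as desired (depending on $\epsilon$). Riesz--Thorin over the triangle then gives $\|T_{j,l}\|_{L^q\to L^p}\lesssim_\epsilon 2^{-C(j+l)}2^{-jn(1/p-1/q)}$ in both ranges, which is the second estimate in each display.

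\textbf{Principal part.} For $L^2\to L^2$, write $\sum_{l\le j\epsilon}T_{j,l}=T_j-\sum_{l>j\epsilon}T_{j,l}$; the subtracted tail has $L^2$ operator norm $\lesssim\sum_{l>j\epsilon}2^{-C(j+l)}\lesssim 2^{-Cj}$ by the previous step, while $T_j=T_a^\varphi\psi_j(D)$ satisfies the optimal global $L^2$ bound of \cite{IMS2}, which for the $j$-th Littlewood--Paley piece of an order-$m$ amplitude reads $\|T_j\|_{L^2\to L^2}\lesssim 2^{j(m+\zeta)}$; this is the $q=p=2$ instance of the claimed exponents. The off-diagonal corners $L^1\to L^2$ and $L^1\to L^\infty$ come from this together with the (approximate) frequency localization of $T_j$ at scale $2^j$: since $\psi_j(D)$, hence $\psi_j(D)(T_a^\varphi)^{*}=(T_j)^{*}$, outputs functions frequency-localized in a ball of radius $\lesssim 2^j$, Bernstein gives $\|(T_j)^{*}g\|_{L^\infty}\lesssim 2^{jn/2}\|(T_j)^{*}g\|_{L^2}\lesssim 2^{jn/2}\|T_j\|_{L^2\to L^2}\|g\|_{L^2}$, i.e. $\|T_j\|_{L^1\to L^2}=\|(T_j)^{*}\|_{L^2\to L^\infty}\lesssim 2^{j(m+\zeta+n/2)}$; and then, again by Bernstein (now for $T_jf$, whose frequency support lies in a ball of radius $\lesssim 2^j$ because the amplitude has $x$-bandwidth $O(\langle\xi\rangle^{\delta})$ with $\delta<1$), $\|T_jf\|_{L^\infty}\lesssim 2^{jn/2}\|T_jf\|_{L^2}\lesssim 2^{jn/2}\|T_j\|_{L^1\to L^2}\|f\|_{L^1}$, so $\|T_j\|_{L^1\to L^\infty}\lesssim 2^{j(m+\zeta+n)}$; these match \eqref{OIO_ranges} at $(q,p)=(1,2)$ and $(1,\infty)$. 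The remaining corner $L^1\to L^1$ (which is what carries the range $q\le p\le 2$ through $p=1$) is obtained by decomposing $T_j=\sum_\nu T_j^\nu$ and running a square-function/almost-orthogonality argument on this sum — using the separation of the $C_j^\nu$ together with the $L^2$ bound and the kernel estimates of Lemma~\ref{kernel lemma} — which is precisely what produces the power $n(1-\varkappa)/2$ rather than the easy $n(1-\varkappa)$ coming from a crude summation over the $O(2^{njk})$ pieces.

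\textbf{Conclusion and main obstacle.} With the four corner bounds in hand for the principal part (and the geometrically decaying ones for the tail), Riesz--Thorin interpolation over the triangle yields the two families of $L^q\to L^p$ estimates, and restricting to the edges $1/p\ge 1/2$, resp. $1/p\le1/2$, gives the first lines of the two displays. I expect the genuinely delicate point to be the $L^1$-endpoint bound for the principal part: getting the sharp exponent there — in particular the factor $n(1-\varkappa)/2$ and the correct appearance of $\zeta$ — forces one to combine the oscillatory $L^2$ information imported from \cite{IMS2} with the kernel estimates and the combinatorics of the second-dyadic decomposition in just the right way; by comparison the tail, the Bernstein steps, and the kernel estimates themselves are routine.
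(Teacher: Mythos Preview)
Your overall architecture --- establish bounds at the corners $L^2\to L^2$, $L^1\to L^1$, $L^1\to L^\infty$, then interpolate and use Bernstein on the input frequency localization --- is exactly what the paper does. The tail is handled by the kernel lemma in both approaches (the paper routes this through a pointwise maximal-function bound and the SND change of variables $x\mapsto\nabla_\xi\varphi(x,\xi_j^\nu)$ rather than Schur, but the effect is the same once one crudely sums over $\nu$ and absorbs the $O(2^{njk})$ loss into $2^{-lL}$ for $l>j\epsilon$).

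There is one genuine gap. Your second Bernstein step, $\|T_jf\|_{L^\infty}\lesssim 2^{jn/2}\|T_jf\|_{L^2}$, requires $T_jf$ to be frequency-localized at scale $2^j$, and the justification you give (``the amplitude has $x$-bandwidth $O(\langle\xi\rangle^\delta)$'') controls the wrong object: it is the oscillatory factor $e^{i\varphi(x,\xi)}$ that governs the $x$-frequency content, and neither the $\textart F^k$ condition nor the $L^2$-condition bounds $\nabla_x\varphi$ itself (only mixed derivatives $\partial_x\partial_\xi\varphi$). Your first Bernstein step is fine because $(T_j)^*=\psi_j(D)(T_a^\varphi)^*$ genuinely outputs frequency-localized functions, but the forward operator need not. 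The paper avoids this entirely by bounding $\|T_j\|_{L^1\to L^\infty}$ directly via $\sup_{x,y}|K_j^\nu(x,y)|$ summed over $\nu$, which gives $2^{j(m+n)}$ --- weaker than your claimed $2^{j(m+\zeta+n)}$, but enough after interpolation with the $L^2$ bound.

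On the $L^1\to L^1$ endpoint: you are right that this is where the sharp exponent $n(1-\varkappa)/2$ enters, but the paper does \emph{not} produce it by an almost-orthogonality or square-function argument on the $T_j^\nu$. It imports the $L^1$ bound $\|T_j\|_{L^1\to L^1}\lesssim 2^{j(m+n(1-\varkappa)/2+\zeta)}$ from \cite{IMS2} in exactly the same way as the $L^2$ bound (the ``Similarly'' in the paper's proof), and then subtracts the tail. So the delicate input is the full $L^p$ theory of \cite{IMS2}, not just its $L^2$ case; your proposed internal derivation via the second-dyadic combinatorics is a different route, and as written it is only a sketch.
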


\begin{proof}
We begin by proving the geometrically decaying $L^q\to L^p$ estimates
for $T_{j, l}^\nu$ when $1\leq p \leq 2$.  Lemma \ref{kernel lemma} gives us that
\begin{align}\label{pt-est_oio}
|T_{j, l}^\nu f(x)| 
& \lesssim \|(1+2^{jw(k,\rho)}|\nabla_\xi \varphi(x,\xi_j^\nu)-y|)^{N} K_j (x,y)\|_{L^{r'}_y(\Rn)}\\
&\nonumber\qquad\times\Big(\int_{\Rn} \frac{|f(y)|^r}{(1+2^{jw(k,\rho)}|\nabla_\xi \varphi(x,\xi_j^\nu)-y|)^{rN}} \,dy\Big)^{1/r}\\
&\nonumber\lesssim 2^{-lL(C,\eps)+j\varkappa L(C,\eps)}2^{-jw(k,\rho)(n/r'+L(C,\eps))} 2^{-njw(k,\rho)/r} \mathcal{M}_r f(\nabla_\xi \varphi(x,\xi_j^\nu))\\
&\nonumber\lesssim 2^{-lL(C,\eps)+j\varkappa L(C,\eps)}2^{-jw(k,\rho)(n+L(C,\eps))} \mathcal{M}_r f(\nabla_\xi \varphi(x,\xi_j^\nu))\\
&\nonumber\lesssim 2^{-jnk}2^{-C(j+l)} \mathcal{M}_r f(\nabla_\xi \varphi(x,\xi_j^\nu))
\end{align}
for $N>n/r$, $l>j\eps$, and $L(C,\eps)>\max\{C,\frac{\varkappa-w(k,\rho)n+C+C\eps+nk}{w(k,\rho)+\eps}\}.$ By the SND condition and the Hardy-Littlewood maximal theorem we have that
\begin{align*}
    \Vert T_{j, l} f\Vert_{L^p(\Rn)} \lesssim \sum_{\nu=1}^{\mathcal{N}_j} 2^{-jnk} 2^{-C(j+l)}\Vert \mathcal{M}_r f(\nabla_\xi \varphi(\cdot,\xi_j^\nu)) f\Vert_{L^p(\Rn)} 
    \lesssim 2^{-C(j+l)}\Vert f\Vert_{L^p(\Rn)} 
\end{align*}
for $1<r<p\leq \infty$. If $p=1$ then Young's inequality for integral operators and Lemma \ref{kernel lemma} yields
\begin{align*}
    &\Vert T_{j, l}^\nu f\Vert_{L^1(\Rn)} 
    \lesssim 2^{-lL(C,\eps)+j\varkappa L(C,\eps)}2^{-jw(k,\rho)(n+L(C,\eps))}\Vert f\Vert_{L^1(\Rn)}
    \lesssim 2^{-jnk}2^{-C(j+l)}\Vert f\Vert_{L^1(\Rn)}.
\end{align*}
Thus summing this in $\nu$, we have for all $1\leq p\leq 2$ that
\begin{align}\label{Decay_Lp_oio}
    \Vert T_{j, l} f\Vert_{L^p(\Rn)}
    \lesssim 2^{-C(j+l)}\Vert f\Vert_{L^p(\Rn)}.
\end{align}

We now lift the $L^p$ estimate above to geometrically decaying $L^q\to L^p$ bounds of $T_{j, l}^\nu$ in the range $1\leq q\leq p\leq 2$ . To this end, we use Bernstein's inequality for $1\leq q\leq p\leq 2$ and estimate \eqref{Decay_Lp_oio},
\begin{align}\label{geometricBernstein_oio}
    \Vert T_{j, l}^\nu f\Vert_{L^p(\Rn)}& \lesssim \Vert T_{j, l}^\nu \Vert_{L^p\to L^p} \Vert \Psi_j(D) f \Vert_{L^p(\Rn)}\\
    &\nonumber\lesssim 2^{-C(j+l)}2^{-jn\big(\frac{1}{p}-\frac{1}{q}\big)} \Vert f \Vert_{L^q(\Rn)} 
\end{align}    
where $\Psi_j$ is a fattened Littlewood-Paley piece.\\

The next step is to obtain geometrically decaying $L^q\to L^p$ bounds of $T_{j, l}^\nu$ in the range $1\leq q\leq 2\leq p<q'$. Observe that the pointwise estimate \eqref{pt-est_oio} yields that
\begin{align*}
|T_{j, l} f(x)| 
& \lesssim \sum_{\nu=1}^{\mathcal{N}_j}\|(1+2^{jw(k,\rho)}|\nabla_\xi \varphi(x,\xi_j^\nu)-y|)^{N} K_{j,l}^{\nu} (x,y)\|_{L^{\infty}_y(\Rn)}\|f\|_{L^1(\Rn)}\\
& \nonumber\lesssim 2^{-C(j+l)}\|f\|_{L^1(\Rn)}.
\end{align*}
By now interpolating this $L^1\to L^\infty$ estimate and the $L^2\to L^2$ estimate of $T_{j, l}$ we obtain for $1\leq p\leq 2$ that
\begin{align}
    \Vert T_{j, l} f\Vert_{L^{p'}(\Rn)}&\lesssim 2^{-C(j+l)} \Vert f \Vert_{L^{p}(\Rn)}.
\end{align} 
Now interpolating this with \eqref{geometricBernstein_oio} with $p=2$ and using Bernstein's inequality again we obtain
\begin{align}\label{geometricBernstein2_oio}
    \Vert T_{j, l} f\Vert_{L^p(\Rn)} \lesssim 2^{-C(j+l)}2^{-jn\big(\frac{1}{p}-\frac{1}{q}\big)} \Vert f \Vert_{L^q(\Rn)} 
\end{align}    
for $1\leq q\leq 2\leq p\leq q'$.\\

\textit{The case of $\sum_{l\le j\epsilon} T_{j, l}$:\\}

In this case we leverage the geometrically decaying bounds on $T_{j, l}^\nu$ and the $L^2$ boundedness of $T_j$ established in \cite{IMS2}. For a fixed $j$ consider $T_j$, defined as $2^{j(-\zeta-m)}T_j = \sum_l 2^{j(-\zeta-m)}T_{j, l}$, which corresponds to a OIO associated with an amplitude in the class $S_{\rho, \delta}^{\zeta}$ with $L^p$ operator norm of $$\norm{2^{-jm}T_j}_{L^2\to L^2}=2^{j\zeta}.$$ By the $L^2$-boundedness in \cite{IMS2} we establish the inequality $$\|\sum_l T_{j, l}\|_{L^2\to L^2} \lesssim 2^{j(m+\zeta)}.$$
Using the decomposition $$\|\sum_{l\le j\epsilon} T_{j, l}\|_{L^2\to L^2} \leq\| \sum_l T_{j, l}\|_{L^2\to L^2} + \|\sum_{l>j\epsilon} T_{j, l}\|_{L^2\to L^2},$$ 
and the geometrically decaying estimate \eqref{Decay_Lp_oio} we obtain 
\begin{equation}\label{L2oio}
    \|\sum_{l\le j\epsilon} T_{j, l}\|_{L^2\to L^2}\lesssim 2^{j(m+\zeta)}.
\end{equation}
Similarly for $L^1\to L^1$ we obtain
$$\|\sum_{l\le j\epsilon} T_{j, l}\|_{L^1\to L^1} \lesssim 2^{j(m+n(1-\varkappa)/2+\zeta)}.$$

Interpolating these two bounds we obtain for $1\leq p\leq 2$ that
$$\|\sum_{l\le j\epsilon} T_{j, l}\|_{L^p\to L^p} \lesssim 2^{j(m+\frac{n(1-\varkappa)}{2}\big(\frac{2}{p}-1\big)+\zeta)}.$$

 As demonstrated in the previous case, this and Bernstein's inequality for $1\leq q\leq p\leq 2$ yields 
\begin{align*}
    \Vert \sum_{l>j\epsilon} T_{j, l} f\Vert_{L^p(\Rn)}&\lesssim 2^{j(m+\frac{n(1-\varkappa)}{2}\big(\frac{2}{p}-1\big)+\zeta)}2^{-jn\big(\frac{1}{p}-\frac{1}{q}\big)} \Vert f \Vert_{L^q(\Rn)} .
\end{align*} 

We do the $L^\infty$ bound as before, by taking out the supremum of the kernel and summing over $\nu$, this yields the estimate
\begin{align*}
\|T_{j} f\|_{L^{\infty}(\Rn)}
& \lesssim \sum_{\nu=1}^{\mathcal{N}_j}\sup_{x,y\in \Rn}|K_{j,l}^{\nu} (x,y)|\|f\|_{L^1(\Rn)}
 \nonumber\lesssim 2^{j(m+n)}\|f\|_{L^1(\Rn)},
\end{align*}
which combined with the corresponding estimates for $T_{j, l}$ and the rapid decay yields that 
\begin{align*}
\|\sum_{l>j\epsilon} T_{j,l} f\|_{L^{\infty}(\Rn)} \lesssim 2^{j(m+n)}\|f\|_{L^1(\Rn)}.
\end{align*}

By interpolating the $L^1\to L^\infty$ estimate and the $L^2\to L^2$ estimate of $\sum_{l\le j\epsilon} T_{j, l}$, we obtain for $1\leq p\leq 2$ that
\begin{align*}
    \Vert \sum_{l\le j\epsilon} T_{j, l} f\Vert_{L^{p'}(\Rn)}& 2^{j(m+\zeta+n\big(\frac{2}{p}-1\big)+\zeta\big(1-\frac{2}{p}\big))} \Vert f \Vert_{L^{p}(\Rn)}.
\end{align*}
Moreover, \eqref{L2oio} and Bernstein's inequality yields
\begin{align*}
    \Vert \sum_{l\le j\epsilon} T_{j, l} f\Vert_{L^{2}(\Rn)}& 2^{j(m+\zeta-n\big(\frac{1}{2}-\frac{1}{p}\big))} \Vert f \Vert_{L^{p}(\Rn)}.
\end{align*}

Now interpolating these last two estimates we obtain
\begin{align*}
    \Vert \sum_{l\le j\epsilon} T_{j, l} f\Vert_{L^p(\Rn)} \lesssim  2^{j(m-n\big(\frac{1}{p}-\frac{1}{q}\big)+\zeta\big(\frac{2}{q}-1\big))} \Vert f \Vert_{L^q(\Rn)} 
\end{align*}    
for $1\leq q\leq 2\leq p\leq q'$.
\end{proof}

\section{$L^q$-improving estimates for Fourier integral operators}\label{sec:proof_fio}

This section is devoted to showing the $L^q\to L^p$ boundedness of Fourier integral operators. 

\begin{Lem}\label{lplq_fio}
Assume that $a(x,\xi)\in S^{m}_{\rho,\delta}(\Rn)$ for $0< \rho\leq 1$ and $0\leq \delta< 1$, and $\varphi(x,\xi)$ is in the class $\Phi^2$ and is \emph{SND}. Then for $1\leq q\leq p\leq 2$ 
\begin{align*}
    \begin{cases}
        \|\sum_{l\le j\epsilon}T_{j, l}\|_{L^q\to L^p}\lesssim_{\epsilon} 2^{j(m+(\kappa+(n-\kappa)(1-\rho))\big(\frac{1}{p}-\frac{1}{2}\big)+\zeta)}2^{-jn\big(\frac{1}{p}-\frac{1}{q}\big)},\\ 
        \|T_{j, l}\|_{L^q\to L^p}\lesssim_{\epsilon} 2^{-C(j+l)}2^{-jn\big(\frac{1}{p}-\frac{1}{q}\big)},&\text{for all } l>j\epsilon.
    \end{cases}
\end{align*}
Moreover, for $1\leq q \leq 2 \leq p \leq q'$ we have
\begin{align*}
    \begin{cases}
        \|\sum_{l\le j\epsilon}T_{j, l}\|_{L^q\to L^p}\lesssim_{\epsilon} 2^{j(m-n\big(\frac{1}{p}-\frac{1}{q}\big)+\zeta\big(\frac{2}{q}-1\big))}\\ 
        \|T_{j, l}\|_{L^q\to L^p}\lesssim_{\epsilon} 2^{-C(j+l)}2^{-jn\big(\frac{1}{p}-\frac{1}{q}\big)},& \text{for all }l>j\epsilon.
    \end{cases}
\end{align*}
\end{Lem}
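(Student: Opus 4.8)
The proof mirrors that of Lemma~\ref{lplq_oio}, with Lemma~\ref{kernel lemma_FIO} and its anisotropic weight $w_j^{\nu}$ taking over the role of Lemma~\ref{kernel lemma}. First I would produce geometrically decaying bounds for the spatial tails $T_{j,l}^\nu$ with $l>j\epsilon$; then handle the principal pieces $\sum_{l\le j\epsilon}T_{j,l}$ by interpolating the sharp global $L^2$- and $L^1\to L^1$-bounds for FIOs against that decay; and finally insert Bernstein's inequality to pass from $L^p\to L^p$ to $L^q\to L^p$.

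For the tails, fix $1<r<p\le 2$, write $T_{j,l}^\nu f(x)=\int K_{j,l}^\nu(x,y)f(y)\dd y$ and apply H\"older in $y$ with exponents $(r',r)$: the first factor $\|w_j^{\nu}(x,\cdot)^N K_{j,l}^\nu(x,\cdot)\|_{L^{r'}_y}$ is controlled by Lemma~\ref{kernel lemma_FIO}, while the second factor $\big(\int |f(y)|^r w_j^{\nu}(x,y)^{-rN}\dd y\big)^{1/r}$ is controlled, using that (since $\rho>0$) the weight $w_j^{\nu}(x,\cdot)^{-1}$ is, up to rapidly decaying tails, concentrated on an axis-parallel ellipsoid centred at $\nabla_\xi\varphi(x,\xi_j^\nu)$ with $n-\kappa$ semi-axes $\sim 2^{-j}$ normal to $S_\kappa(x)$ and $\kappa$ semi-axes $\sim 2^{-j\rho/2}$ tangent to it, by a strong-type $L^r$ maximal function of $f$ at $\nabla_\xi\varphi(x,\xi_j^\nu)$, against a negative power of $2^j$. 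Summing over the $\mathscr N_j\sim 2^{j\kappa/2}$ cones — using the \emph{SND} condition (so that $x\mapsto\nabla_\xi\varphi(x,\xi_j^\nu)$ is a uniformly bi-Lipschitz change of variables) together with the $L^p$-boundedness of the bi-isotropic strong maximal operator for $r<p$ — and choosing $L=L(C,\epsilon)$ large enough that $2^{-lL}$ with $l>j\epsilon$ dominates all the accumulated positive $2^j$-powers (the cone count included), one gets $\|T_{j,l}\|_{L^p\to L^p}\lesssim 2^{-C(j+l)}$ for $1\le p\le 2$, $l>j\epsilon$; the case $p=1$ is the same with Young's inequality for integral kernels in place of the maximal function, and taking $L^\infty$ rather than $L^{r'}$ in the H\"older step gives $\|T_{j,l}\|_{L^1\to L^\infty}\lesssim 2^{-C(j+l)}$. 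Since $T_{j,l}^\nu f=T_{j,l}^\nu\Psi_j(D)f$, Bernstein's inequality upgrades these to $\|T_{j,l}\|_{L^q\to L^p}\lesssim 2^{-C(j+l)}2^{-jn(1/p-1/q)}$, first in $1\le q\le p\le 2$ and — after one interpolation with the $L^1\to L^\infty$ bound — in $1\le q\le 2\le p\le q'$ as well.

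For the principal pieces, exactly as in Lemma~\ref{lplq_oio} the sharp global $L^2$-boundedness of FIOs with \emph{SND} phase in $\Phi^2$ (e.g. \cite{IMS}) gives $\|T_j\|_{L^2\to L^2}\lesssim 2^{j(m+\zeta)}$, so subtracting the geometrically summable tail $\sum_{l>j\epsilon}T_{j,l}$ leaves $\|\sum_{l\le j\epsilon}T_{j,l}\|_{L^2\to L^2}\lesssim 2^{j(m+\zeta)}$. Similarly, the sharp $L^1\to L^1$ bound for the Littlewood--Paley pieces of an FIO of order $m$ with $\Phi^2$ phase of rank $\kappa$, which now loses $\tfrac12\big(\kappa+(n-\kappa)(1-\rho)\big)$ derivatives (the rank entering here, in contrast to the $L^2$ theory), combined with that decay gives $\|\sum_{l\le j\epsilon}T_{j,l}\|_{L^1\to L^1}\lesssim 2^{j(m+\frac12(\kappa+(n-\kappa)(1-\rho))+\zeta)}$. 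Interpolating these two yields $\|\sum_{l\le j\epsilon}T_{j,l}\|_{L^p\to L^p}\lesssim 2^{j(m+(\kappa+(n-\kappa)(1-\rho))(1/p-1/2)+\zeta)}$ for $1\le p\le 2$, and Bernstein's inequality via $\Psi_j(D)$ upgrades this to the first displayed estimate. For $1\le q\le 2\le p\le q'$, interpolating the trivial bound $\|T_j\|_{L^1\to L^\infty}\lesssim\sup_{x,y}|K_j(x,y)|\lesssim 2^{j(m+n)}$ against the $L^2\to L^2$ bound above and applying Bernstein exactly as at the end of the proof of Lemma~\ref{lplq_oio} gives the second displayed estimate.

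The main obstacle, relative to the oscillatory case, is twofold. First, the anisotropy of $w_j^{\nu}$ — scale $2^{2j}$ transverse to $S_\kappa(x)$ versus $2^{j\rho}$ along it — forces the maximal-function step to be carried out with a strong (product) maximal operator, and together with the exotic cone count $\mathscr N_j\sim 2^{j\kappa/2}$ makes the bookkeeping of the $2^j$-powers the delicate part of the tail estimate. Second, the $L^1\to L^1$ bound for the principal piece with the \emph{sharp} rank-dependent exponent $\tfrac12(\kappa+(n-\kappa)(1-\rho))$ cannot be obtained by summing moduli of the cone pieces (that would cost an extra $2^{j\kappa/2}$); one must invoke the sharp FIO estimate, or re-derive it by exploiting the limited overlap of the truncated cones $\Xi_j^\nu$ after the \emph{SND} change of variables. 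With those two points settled, the remainder is a routine transcription of the argument for Lemma~\ref{lplq_oio}.
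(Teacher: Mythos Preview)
Your overall architecture is correct and matches the paper: geometrically decaying tails, sharp $L^2$ and $L^p$ bounds for the principal piece via subtraction of the tails, then interpolation and Bernstein. The principal-piece argument you sketch is exactly what the paper does (it cites \cite[Theorem~2.7]{DS} for the $L^2$ bound and otherwise defers to the OIO proof).

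Where you diverge is in the tail estimate. You propose the direct H\"older-with-$(r',r)$ step on $f$ itself, followed by an anisotropic/strong maximal function bound --- i.e.\ a literal transcription of the OIO argument with the product weight $w_j^\nu$ in place of the radial one. The paper does \emph{not} do this. It first invokes the reproducing identity $T_{j,l}^\nu f=T_{j,l}^\nu f_j^\nu$ with $f_j^\nu=\mathcal{X}_j^\nu(D)\Psi_j(D)f$ (equation~\eqref{fatsecond}), then uses H\"older with exponents $(1,\infty)$ and controls $\sup_y|f_j^\nu(y)|/w_j^\nu(x,y)^{N}$ by a Peetre-type iterated maximal function $[\mathcal{M}''(\mathcal{M}'|f_j^\nu|^{r_1})^{r_2/r_1}]^{1/r_2}$ via \cite[Lemma~2.22]{IRS}, exploiting that $\widehat{f_j^\nu}$ is supported in a box of dimensions $2^j\times 2^{j/2}$ adapted to the $(\xi'',\xi')$ splitting. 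After the SND change of variables and the $L^p$ bound for the iterated maximal operator, the estimate~\eqref{eq:high} is used to pass from $\|f_j^\nu\|_{L^p}$ back to $\|f\|_{L^p}$. Your approach avoids the external lemma and the $f_j^\nu$ machinery, at the price of having to justify that the product-type maximal operator (whose axes rotate with $\nu$) is bounded on $L^p$ uniformly in $\nu$; this is true, since for each fixed $\nu$ it reduces to the standard strong maximal function after a rotation, but it is the step you should make explicit. The paper's route is more robust in that the Peetre inequality gives a pointwise bound valid for arbitrarily small $r_1,r_2>0$, so the endpoint $p=1$ comes for free without a separate Young argument. Either route yields the same $2^{-C(j+l)}$ decay, and your identification of the cone count as $\mathscr{N}_j\sim 2^{j\kappa/2}$ is the correct one (the paper's $2^{j(n-1)/2}$ in the displayed computation appears to be a slip, or a tacit specialization to full rank).
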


\begin{proof}
The proof for the $\sum_{l\le j\epsilon} T_{j, l}^\nu$ case is similar to the one above in Lemma \ref{lplq_oio}, corresponding to the OIOs, with a few modifications. Mainly, one substitutes the use of the sharp $L^2$-boundedness result \cite{IMS2} for OIOs with a corresponding optimal $L^2$-boundedness result in \cite[Theorem 2.7]{DS}. The rest of the changes to the argument involve simple numerical modifications.\\

We turn to proving the geometrically decaying $L^q\to L^p$ estimates
for $T_{j, l}^\nu$.

Define $\mathcal{M}''$ as the Hardy-Littlewood maximal function acting on the function in the $x''$ variable, i.e.
\begin{equation*}
    \mathcal{M}''f(x):= \sup_{\delta>0} \frac{1}{|B(x'',\delta)|} \int_{B(x'',\delta)} |f(y'',x')| \dd y'',
\end{equation*}
where $x=(x'',x')$. We define $\mathcal{M}'$ in a similar manner.\\

Using \eqref{fatsecond} we can write $T^\nu_{j,l}f(x)=T^\nu_{j,l}f^\nu_j(x)$, where $f^\nu_j(x)=\chi_{j}^\nu(D)\Psi_j(D)f(x).$ Now using Lemma 2.22 in \cite{IRS}, take $k=\nu$, $n'=\kappa$ and $ r_1=r_2 = \frac 1{2N} <p$ and note that the spectrum of $f^\nu_j$ is $$\supp \widehat {f_j^\nu} \subset \left \{ (\xi'',\xi')\in \Rl^{\kappa} \times \Rl^{n-\kappa}:\ |\xi''|\leq 2^{j}, |\xi'|\leq 2^{\frac {j}{2}} \right \}.$$ Moreover take $c_{j,\nu}=2^{j}$ and $d_{j,\nu}=2^{\frac {j}{2}}$. Then the conditions of Lemma 2.22 in \cite{IRS} and Lemma \ref{kernel lemma} all hold for $f_j^\nu$, and therefore we have
\begin{align}\label{pt-est_fio}
&|T_{j, l}^\nu f^\nu_j(x)| \\
& \nonumber\lesssim  \|w_j^{\nu}(x,\cdot)^{N}K_j^\nu(x,\cdot) \|_{L^{1}_y(\Rn)}\sup_{y\in\Rl^n} \frac{|f^\nu_j(y)|}{w_j^{\nu}(x,y)^{N}}\\
&\nonumber\lesssim 2^{j(m-1)}) 2^{-lL} [\mathcal{M}''(\mathcal{M}' |f^\nu_j|^{r_1})^{r_2/r_1}]^{1/r_2}(\nabla_\xi \varphi(x,\xi_j^\nu))\\
&\nonumber\lesssim 2^{-j(n-1)/2}2^{-j\left (\frac{n+1}{2}-\frac{n+1}{2p}\right )}2^{-C(j+l)} [\mathcal{M}''(\mathcal{M}' |f^\nu_j|^{r_1})^{r_2/r_1}]^{1/r_2}(\nabla_\xi \varphi(x,\xi_j^\nu))
\end{align}
for $l>j\eps$, $L-N> n$ and $L=L(C,\eps,n)$ sufficiently large.\\

By the SND condition, the Hardy-Littlewood maximal theorem, and \eqref{eq:high} we have
\begin{align}\label{Decay_Lp_fio}
    &\Vert T_{j, l} f\Vert_{L^p(\Rn)} \\
    &\lesssim \nonumber\sum_{\nu=1}^{\mathcal{N}_j} 2^{-j(n-1)/2} 2^{-j\left (\frac{n+1}{2}-\frac{n+1}{2p}\right )} 2^{-C(j+l)}\Vert \mathcal{M}''(\mathcal{M}' |f^\nu_j|^{r_1})^{r_2/r_1} \Vert_{L^{p/r_2}(\Rn)}^{1/r_2} \\
    &\lesssim \nonumber2^{-C(j+l)}2^{-j\left (\frac{n+1}{2}-\frac{n+1}{2p}\right )}\Vert \mathcal{M}' |f^\nu_j|^{r_1}) \Vert_{L^{p/r_1}(\Rn)}^{1/r_1} \\
    &\lesssim \nonumber2^{-C(j+l)}2^{-j\left (\frac{n+1}{2}-\frac{n+1}{2p}\right )}\Vert f^\nu_j \Vert_{L^p(\Rn)} \\
    &\lesssim \nonumber2^{-C(j+l)}\Vert f \Vert_{L^p(\Rn)} 
\end{align}
for $1\leq p\leq \infty$.  We also used that there are roughly $2^{j(n-1)/2}$ terms in the sum over $\nu$.\\

We now lift the $L^p$ estimate above to geometrically decaying $L^q\to L^p$ bounds of $T_{j, l}^\nu$ in the range $1\leq q\leq p<\infty$. To this end, we use Bernstein's inequality for $1\leq q\leq p\leq 2$ and estimate \eqref{Decay_Lp_fio},
\begin{align}\label{geometricBernstein_fio}
    \Vert T_{j, l}^\nu f\Vert_{L^p(\Rn)}& \lesssim \Vert T_{j, l}^\nu \Vert_{L^p\to L^p} \Vert \Psi_j(D) f \Vert_{L^p(\Rn)}\\
    &\nonumber\lesssim 2^{-C(j+l)}2^{-jn\big(\frac{1}{p}-\frac{1}{q}\big)} \Vert f \Vert_{L^q(\Rn)} 
\end{align}    
where $\Psi_j$ is a fattened Littlewood-Paley piece.\\

The next step is to obtain geometrically decaying $L^q\to L^p$ bounds of $T_{j, l}^\nu$ in the range $1\leq q\leq 2\leq p<q'$. Observe that
\begin{align*}
|T_{j, l} f(x)| 
& \lesssim \sum_{\nu=1}^{\mathcal{N}_j}\|(1+2^{jw(k,\rho)}|\nabla_\xi \varphi(x,\xi_j^\nu)-y|)^{N} K_{j,l}^{\nu} (x,y)\|_{L^{\infty}_y(\Rn)}\|f\|_{L^1(\Rn)}\\
& \nonumber\lesssim 2^{-C(j+l)}\|f\|_{L^1(\Rn)}.
 \end{align*}
By interpolating this $L^1\to L^\infty$ estimate and the $L^2\to L^2$ estimate of $T_{j, l}$ we obtain for $1\leq p\leq 2$ that
\begin{align}
    \Vert T_{j, l} f\Vert_{L^{p'}(\Rn)}&\lesssim 2^{-C(j+l)} \Vert f \Vert_{L^{p}(\Rn)}.
\end{align} 
Now interpolating this with \eqref{geometricBernstein_fio} for $p=2$ and using Bernstein's inequality, again we obtain
\begin{align}\label{geometricBernstein_fio2}
    \Vert T_{j, l}^\nu f\Vert_{L^p(\Rn)} \lesssim 2^{-C(j+l)}2^{-jn\big(\frac{1}{p}-\frac{1}{q}\big)} \Vert f \Vert_{L^q(\Rn)} 
\end{align}    
for $1\leq q\leq 2\leq p\leq q'$.
\end{proof}

\section{Proof of the main results}\label{sec:proof}

In this section we give a brief description of the proof of Theorem \ref{main_FIO} and Theorem \ref{main_OIO}. \\

We distil the idea of \cite{BC}, \cite{LMRNC}, and \cite{LSS} to use geometrically decaying $L^q\to L^p$ estimates of pieces of an operator to prove sparse form bounds. We attribute the subsequent lemma to the work of Beltran and Cladek in \cite{BC}, building on earlier work of Lacey, and Spencer \cite{LSS}, and Lacey, Mena, and Reguera \cite{LMRNC}. Using the one-third trick and using shifted dyadic grids, they made calculations that have culminated in the following lemma. Notably, although their original paper did not cast it as a lemma, we have chosen to formalize it as such in our context.

\begin{Lem}[\cite{BC}]\label{sparse domination lift_lemma}
Fix $\eps>0$ and $\delta> \eps$. Then for a sequence of operators $(T_{j,l})_{j,l\geq 0}$ we have
\begin{align*}
\Big<\sum_{j\ge 0}\sum_{l \leq j \epsilon} T_{j,l} f, g\Big>
& \lesssim_{\epsilon}\sum_{j\ge 0}\sum_{\substack{Q\text{ dyadic}: \\ \ell(Q)= 2^{\lfloor{-j\delta+j\epsilon+10\rfloor}}}} \!\!\!\!  |Q|^{1/r+1/s'-1}\Big\|\sum_{l\le j\epsilon}T_{j, l}\Big\|_{L^r\to L^s}|Q| \left<f\right>_{r, Q}\left<g\right>_{s', Q},
\end{align*}
and
\begin{align*}
\Big< \sum_{j \geq 0} \sum_{l > j\epsilon} T_{j,l}f, g \Big> 
& \lesssim_{\epsilon} \sum_{j \geq 0} \sum_{l > j\epsilon} \sum_{\substack{Q \text{ dyadic}: \\ \ell(Q)=2^{\lfloor -j\delta+l+10 \rfloor}}} \!\!\!\! |Q|^{1/r+1/s'-1} \|T_{j,l}\|_{L^r \to L^s} |Q| \left< f \right>_{r,Q} \left< g \right>_{s',Q},
\end{align*}
where the sums are taken over $Q$ in some shifted dyadic grid $\mathcal{D}_v$.
\end{Lem}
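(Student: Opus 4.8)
The plan is to recast, in the present notation, the bookkeeping carried out in the proof of the sparse bounds of \cite{BC} (which itself rests on \cite{LSS,LMRNC}). I would fix $\eps>0$ and choose $\delta$ with $\eps<\delta$, small enough that for each $j$ the quantities $2^{-j\delta+l}$ ($l>j\eps$) and $2^{-j\delta+j\eps}$ dominate the intrinsic spatial scale of the pieces $T_{j,l}$; the condition $\delta>\eps$ additionally guarantees that these quantities are bounded above and decrease geometrically in $j$, so that they are admissible dyadic side lengths and every sum produced below converges. The two displays are then proved in parallel: for the first one freezes the block $\sum_{l\le j\eps}T_{j,l}$ and works at the single scale $2^{-j\delta+j\eps}$, while for the second one keeps $l$ free and works at scale $2^{-j\delta+l}$.

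The first step is a \emph{spatial localisation}. Writing $T_{j,l}=\sum_{\nu=1}^{\mathscr N_j}T_{j,l}^{\nu}$, the cutoff $\wt\psi_{j,l}^{\nu}(x,y)$ confines the kernel of $T_{j,l}^{\nu}$ to a neighbourhood of the set $\{\,y=\nabla_\xi\varphi(x,\xi_j^{\nu})\,\}$, and Lemma~\ref{kernel lemma} (resp.\ Lemma~\ref{kernel lemma_FIO}) shows that off an enlargement of that neighbourhood the kernel carries an arbitrarily large negative power of the appropriate dilation, so the corresponding tail is a harmless, rapidly summable error. Because $\varphi$ is \emph{SND} (and, in the OIO case, obeys the $L^2$-condition), the map $x\mapsto\nabla_\xi\varphi(x,\xi_j^{\nu})$ is bi-Lipschitz with constants uniform in $j,\nu$; composing with this change of variables, one may therefore treat $T_{j,l}$---up to the negligible tail and the loss of a uniform constant---as an operator whose kernel is supported in $\{\,|x-y|\lesssim 2^{-j\delta+l}\,\}$, and $\sum_{l\le j\eps}T_{j,l}$ as one supported in $\{\,|x-y|\lesssim 2^{-j\delta+j\eps}\,\}$.

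Granting this, I would partition $\Rn$ (up to a null set) into cubes $Q$ of side $2^{-j\delta+l}$, decompose $f=\sum_Q f\1_Q$, and bound each term by H\"older: since $T_{j,l}(f\1_Q)$ is supported in a fixed dilate $CQ$ of $Q$, one gets $|\langle T_{j,l}(f\1_Q),g\rangle|\le\|T_{j,l}\|_{L^r\to L^s}\|f\|_{L^r(Q)}\|g\|_{L^{s'}(CQ)}$. Invoking the one-third trick, for one of finitely many shifted dyadic grids $\mathcal D_v$ there is $\wh Q\in\mathcal D_v$ with $CQ\subset\wh Q$ and $\ell(\wh Q)\le 2^{10}\ell(Q)$; grouping the cubes $Q$ by their $\wh Q$ (each $\wh Q$ receiving only $O_n(1)$ of them, the side lengths being comparable), replacing $\|f\|_{L^r(Q)}\le\|f\|_{L^r(\wh Q)}$ and $\|g\|_{L^{s'}(CQ)}\le\|g\|_{L^{s'}(\wh Q)}$, the sum over $Q$ becomes $\lesssim\|T_{j,l}\|_{L^r\to L^s}\sum_{\wh Q\in\mathcal D_v}\|f\|_{L^r(\wh Q)}\|g\|_{L^{s'}(\wh Q)}$. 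One then rewrites $\|f\|_{L^r(\wh Q)}\|g\|_{L^{s'}(\wh Q)}=|\wh Q|^{1/r+1/s'-1}\,|\wh Q|\,\langle f\rangle_{r,\wh Q}\langle g\rangle_{s',\wh Q}$ and sums over $j$ (for the second display, also over $l>j\eps$). The first display is the identical computation with $T_{j,l}$ replaced by $\sum_{l\le j\eps}T_{j,l}$ and $2^{-j\delta+l}$ by $2^{-j\delta+j\eps}$.

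The hard part will be purely in the bookkeeping of this reduction: verifying the claimed localisation at the dilated scale uniformly in $\nu$ (the bi-Lipschitz change of variables supplied by the \emph{SND} and $L^2$ conditions is precisely what lets one pass through the $\nu$-sum), keeping track of the shifted grids and collapsing them into the single grid $\mathcal D_v$ of the statement, and pinning down the admissible window for $\delta$. No new analytic estimate is needed beyond the kernel bounds of Lemmas~\ref{kernel lemma} and \ref{kernel lemma_FIO}; everything else is combinatorial, which is what makes the crude losses (the constant $2^{10}$, the $O_n(1)$ overcounting) affordable.
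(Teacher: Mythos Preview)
The paper does not give its own proof of this lemma: it is stated with a citation to \cite{BC} and the only indication of method is the remark preceding it (``Using the one-third trick and using shifted dyadic grids, they made calculations that have culminated in the following lemma''). Your sketch is precisely that argument---spatial localisation of the kernel of each $T_{j,l}^{\nu}$ via the cutoffs $\wt\psi_{j,l}^{\nu}$, a partition of $f$ into cubes at the matching scale, H\"older on each cube, and the one-third trick to land in a single shifted grid---so in spirit it coincides with what the paper points to.

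One point deserves a little more care than you give it. You write that after the bi-Lipschitz change of variables $x\mapsto\nabla_\xi\varphi(x,\xi_j^{\nu})$ one may regard $T_{j,l}=\sum_\nu T_{j,l}^{\nu}$ as having kernel supported in $\{|x-y|\lesssim 2^{-j\delta+l}\}$. But that change of variables depends on $\nu$, so it cannot be applied to $T_{j,l}$ as a whole; what actually makes the argument go through is that the \emph{difference} $\nabla_\xi\varphi(x,\xi_j^{\nu})-x$ is itself bounded by the relevant spatial scale (uniformly in $\nu$), so all the $\nu$-pieces are localised near the \emph{same} diagonal. In the OIO case this is exactly the content of the $\textart F^k$ estimates $|\nabla_\xi(\varphi-x\cdot\xi)|\lesssim|\xi|^{k-1}$, and in the FIO case it follows from the $\Phi^2$ bounds together with the anisotropic structure of $\wt\psi_{j,l}^{\nu}$. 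With that observation in place the rest of your outline is correct and matches \cite{BC}; the parameter $\delta$ in the lemma is not free but is dictated by this intrinsic spatial scale (so $\delta=\varkappa$ for OIOs, $\delta=\rho$ for FIOs), consistent with how the lemma is applied in the proofs of Theorems~\ref{main_OIO} and~\ref{main_FIO}.
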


\begin{Prop}\label{lifting result}
    Let $1\leq q, p\leq \infty$, $\delta>\eps>0$ and $m<(\delta-\eps)n(1/q-1/p)$ be given. Suppose that for all $l,j\geq 0$ and all $C\gg 1$ (not depending on $l,j$) such that
    \begin{equation}\label{lifing est}
        \begin{cases}
            \|T_{j,l}\|_{L^q \to L^p}\lesssim 2^{-C(j+l)}, \text{ and}&\\
            \|\sum_{l\le j\epsilon}T_{j,l}\|_{L^q \to L^p}\lesssim 2^{jm},
        \end{cases}
    \end{equation}
     holds, then for any compactly supported bounded functions $f, g$ on $\mathbb{R}^n$, there exist sparse collections $\mathcal{S}$ and $\widetilde{\mathcal{S}}$ of dyadic cubes such that
    \begin{equation*}
            \begin{cases}
            |\left<T f, g\right>|\le C(m, q, p)\Lambda_{\mathcal{S}, q, p'}(f, g), \text{ and }\\
            |\left<T f, g\right>|\le C(m, q, p)\Lambda_{\widetilde{\mathcal{S}}, p', q}(f, g).
            \end{cases}
    \end{equation*}
\end{Prop}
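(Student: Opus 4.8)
The plan is to obtain both sparse bounds by feeding the two hypotheses of \eqref{lifing est} into Lemma~\ref{sparse domination lift_lemma}. First I would write $T=\sum_{j\ge0}\sum_{l\le j\epsilon}T_{j,l}+\sum_{j\ge0}\sum_{l>j\epsilon}T_{j,l}$ as in Section~\ref{sec:decomp} and treat the \emph{principal part} and the \emph{tail} separately. For the principal part, apply the first display of Lemma~\ref{sparse domination lift_lemma} with the exponent pair $(r,s)=(q,p)$, so that $|Q|^{1/r+1/s'-1}=|Q|^{1/q-1/p}$; this bounds $|\langle\sum_j\sum_{l\le j\epsilon}T_{j,l}f,g\rangle|$ by $\sum_{j\ge0}c_j\,\Lambda_{\mathcal{Q}_j,q,p'}(f,g)$, where $\mathcal{Q}_j$ is the family of cubes of a fixed shifted grid $\mathcal{D}_v$ of side length $2^{\lfloor-j\delta+j\epsilon+10\rfloor}$ and $c_j=|Q|^{1/q-1/p}\|\sum_{l\le j\epsilon}T_{j,l}\|_{L^q\to L^p}$ — a genuine constant for each $j$, since all $Q\in\mathcal{Q}_j$ share the measure $|Q|\sim2^{-jn(\delta-\epsilon)}$. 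By the second line of \eqref{lifing est}, $c_j\sim2^{j(m-n(\delta-\epsilon)(1/q-1/p))}$, so $\sum_j c_j<\infty$ precisely because $m<(\delta-\epsilon)n(1/q-1/p)$. The tail is handled the same way via the second display of Lemma~\ref{sparse domination lift_lemma} (again with $(r,s)=(q,p)$): the cubes at level $(j,l)$ have side length $2^{\lfloor-j\delta+l+10\rfloor}$, the corresponding weight is $c_{j,l}\lesssim2^{-C(j+l)}2^{n(l-j\delta)(1/q-1/p)}$ by the first line of \eqref{lifing est}, and $\sum_{j\ge0}\sum_{l>j\epsilon}c_{j,l}<\infty$ once $C$ is taken large enough in terms of $n,q,p,\delta,\epsilon$ — this is exactly where the quantifier ``for all $C\gg1$'' is used.

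It then remains to collapse the countably many level-indexed forms into a single sparse form. Each $\mathcal{Q}_j$ and each tail family is a pairwise disjoint tiling of $\Rn$, hence $\eta$-sparse for every $\eta<1$ (take $E_Q:=Q$), so each $\Lambda_{\mathcal{Q}_j,q,p'}$, $\Lambda_{\mathcal{Q}_{j,l},q,p'}$ is a bona fide sparse form. By Lemma~\ref{universal sparse form} there is one sparse collection $\mathcal{S}$, depending only on the compact supports of $f,g$, with $\Lambda_{\mathcal{Q}_j,q,p'}(f,g)\lesssim_n\Lambda_{\mathcal{S},q,p'}(f,g)$ and $\Lambda_{\mathcal{Q}_{j,l},q,p'}(f,g)\lesssim_n\Lambda_{\mathcal{S},q,p'}(f,g)$ uniformly in $j,l$; summing the two convergent series of weights then gives $|\langle Tf,g\rangle|\lesssim\Lambda_{\mathcal{S},q,p'}(f,g)$, the first asserted bound.

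For the second bound $|\langle Tf,g\rangle|\lesssim\Lambda_{\widetilde{\mathcal{S}},p',q}(f,g)$ I would rerun the argument for the adjoint $T^{*}=\sum_{j,l}(T_{j,l})^{*}$. The kernel bounds of Section~\ref{sec:decomp} are invariant up to constants under passing to the adjoint, since this amounts to the change of variables $y\leftrightarrow\nabla_\xi\varphi(x,\xi_j^\nu)$, a diffeomorphism by the \emph{SND} hypothesis (Definition~\ref{nondeg phase}); hence $(T_{j,l})^{*}$ obeys \eqref{lifing est} with the same exponent pair, and applying Lemma~\ref{sparse domination lift_lemma} to $T^{*}$ with input function $g$, test function $f$ and $(r,s)=(q,p)$ yields the summand $\langle g\rangle_{q,Q}\langle f\rangle_{p',Q}=\langle f\rangle_{p',Q}\langle g\rangle_{q,Q}$, i.e. the form $\Lambda_{\widetilde{\mathcal{S}},p',q}(f,g)$, after which $\langle T^{*}g,f\rangle=\overline{\langle Tf,g\rangle}$ closes the argument. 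Overall the proposition is essentially an assembly of Lemmas~\ref{sparse domination lift_lemma} and \ref{universal sparse form}, so I do not expect a single hard step; the parts that need care are the exact bookkeeping of the prescribed scales against the operator norms (so that convergence holds at the sharp threshold on $m$) and replacing the a priori divergent union of scale-indexed cubes by one sparse family via Lemma~\ref{universal sparse form}.
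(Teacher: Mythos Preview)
Your argument for the first sparse bound is essentially the paper's: feed the two lines of \eqref{lifing est} into Lemma~\ref{sparse domination lift_lemma} with $(r,s)=(q,p)$, check that the resulting scale-indexed weights form a summable series (by the threshold on $m$ for the principal part, and by taking $C$ large for the tail), and then collapse to a single sparse collection via Lemma~\ref{universal sparse form}. The paper does exactly this, in the same order.

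Where you diverge is the second bound. The paper dispatches it in one sentence --- it ``follows by duality'' from the first --- and says nothing more. You instead rerun the whole machine on $T^*=\sum_{j,l}(T_{j,l})^*$, and to check that $(T_{j,l})^*$ again satisfies \eqref{lifing est} with exponents $(q,p)$ you appeal to the kernel estimates of Section~\ref{sec:decomp} and to the SND hypothesis (Definition~\ref{nondeg phase}). That steps outside the abstract proposition: its hypotheses mention only operator norms, not kernels, phases, or non-degeneracy, so as written you are proving the specialization to the FIO/OIO setting rather than the statement itself. Your description of the adjoint as arising from ``the change of variables $y\leftrightarrow\nabla_\xi\varphi(x,\xi_j^\nu)$'' is also imprecise --- the adjoint kernel is $\overline{K_{j,l}^\nu(y,x)}$, and verifying the same $L^q\to L^p$ bounds for it takes more than a coordinate swap. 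In the paper's applications the adjoint does inherit \eqref{lifing est}, so both routes ultimately reach the same conclusion; but to match the proposition as stated, keep to the abstract duality remark and do not import structure from Section~\ref{sec:decomp}.
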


\begin{proof}
    Observe that estimate $|\left<T f, g\right>|\le C(m, q, p)\Lambda_{\widetilde{\mathcal{S}}, p', q}(f, g)$ follows by duality from $|\left<T f, g\right>|\le C(m, q, p)\Lambda_{\mathcal{S}, q, p'}(f, g)$. By Lemma \ref{sparse domination lift_lemma}, and the first estimate in \eqref{lifing est}, we have that
    \begin{align*}
        &\Big<\sum_{j\ge 0}\sum_{l > j \epsilon} T_{j,l} f, g\Big>\\
        & \lesssim_{\epsilon}\sum_{j\ge 0}\sum_{l > j \epsilon}\sum_{\substack{Q\text{ dyadic}: \\ \ell(Q)= 2^{\lfloor{-j\delta+j\epsilon+10\rfloor}}}} \!\!\!\!  |Q|^{1/q+1/p'-1}\Big\|T_{j, l}^\nu\Big\|_{L^q\to L^p}|Q| \left<f\right>_{q, Q}\left<g\right>_{p', Q}\\
        & \lesssim_{\epsilon}\sum_{j\ge 0}\sum_{l > j \epsilon} 2^{-C(j+l)}2^{(-j\delta+j\eps)n(1/q-1/p)}
        \sum_{\substack{Q\text{ dyadic}: \\ \ell(Q)= 2^{\lfloor{-j\delta+j\epsilon+10\rfloor}}}} \!\!\!\! 
        |Q| \left<f\right>_{q, Q}\left<g\right>_{p', Q}.
    \end{align*}
    Both sums in $j$ and $l$ converge for sufficiently large $C$. The bound for $\sum_{l \leq j \epsilon} T_{j,l}$ follows from essentially the same arguments, except that we need to take $m<(\delta-\eps)n(1/q-1/p)$. Finally Lemma \ref{universal sparse form} gives us the sparse form bound we desire.
\end{proof}

We now turn to the proof of our main results.

\begin{proof}[Proof of Theorem \ref{main_OIO}]
Note that for $1\leq q\leq p\leq 2$ we have by Lemma \ref{lplq_oio} that
\begin{align*}
    \begin{cases}
        \|\sum_{l\le j\epsilon}T_{j, l}\|_{L^q\to L^p}\lesssim_{\epsilon} 2^{j(m+\frac{n(1-\varkappa)}{2}\big(\frac{2}{p}-1\big)+\zeta)}2^{-jn\big(\frac{1}{p}-\frac{1}{q}\big)},\\ 
        \|T_{j, l}\|_{L^q\to L^p}\lesssim_{\epsilon} 2^{-C(j+l)}2^{-jn\big(\frac{1}{p}-\frac{1}{q}\big)},& l>j\epsilon.
    \end{cases}
\end{align*}
By taking $C$ large enough we have
$$\|T_{j, l}\|_{L^q\to L^p}\lesssim_{\epsilon} 2^{-C(j+l)},\quad l>j\epsilon$$
 for all $1\leq q\leq p\leq 2$. Moreover, to satisfy the hypothesis of Proposition \ref{lifting result} we need that
 \begin{align*}
     m&<n\big(\frac{1}{p}-\frac{1}{q}\big)+\varkappa n\big(\frac{1}{q}-\frac{1}{p}\big)-\frac{n(1-\varkappa)}{2}\big(\frac{2}{p}-1\big)-\zeta\\
     &=-n(1-\varkappa)\big(\frac{1}{q}-\frac{1}{2}\big)-\zeta,
 \end{align*}
since we can take $\eps>0$ arbitrarily small. Under this assumption on $m$, the case when $1\leq q\leq p\leq 2$ in Theorem \ref{main_OIO} follows from Proposition \ref{lifting result}. Moreover, for $1\leq q \leq 2 \leq p \leq q'$ we have
\begin{align*}
    \begin{cases}
        \|\sum_{l\le j\epsilon}T_{j, l}\|_{L^q\to L^p}\lesssim_{\epsilon} 2^{j(m-n\big(\frac{1}{p}-\frac{1}{q}\big)+\zeta\big(\frac{2}{q}-1\big))}\\ 
        \|T_{j, l}\|_{L^q\to L^p}\lesssim_{\epsilon} 2^{-C(j+l)}2^{-jn\big(\frac{1}{p}-\frac{1}{q}\big)},& l>j\epsilon.
    \end{cases}
\end{align*}
The second inequality can be handled as above. For the first one we need that
 \begin{align*}
     m&<n\big(\frac{1}{p}-\frac{1}{q}\big)+\varkappa n\big(\frac{1}{q}-\frac{1}{p}\big)-\zeta\big(\frac{2}{q}-1\big)\\
     &=-n(1-\varkappa)\big(\frac{1}{q}-\frac{1}{p}\big)-\zeta\big(\frac{2}{q}-1\big),
 \end{align*}
since we can take $\eps>0$ arbitrarily small. Under this assumption we then obtain Theorem \ref{main_OIO} in the range $1\leq q \leq 2 \leq p \leq q'$.
\end{proof}

\begin{proof}[Proof of Theorem \ref{main_FIO}]
The proof follows is very similar manner to the one above, the main modification is the following requirements:
 \begin{align*}
     m&<-n(1-\rho)\big(\frac{1}{q}-\frac{1}{2}\big)-\kappa\rho\big(\frac{1}{p}-\frac{1}{2}\big)-\zeta,\quad 1\leq q\leq p\leq 2;\\
     m&<-n(1-\rho)\big(\frac{1}{q}-\frac{1}{p}\big)-\zeta\big(\frac{2}{q}-1\big)\quad 1\leq q \leq 2 \leq p \leq q'.
 \end{align*}
\end{proof}

%%%%%%%%%%%%%%%%%%%%%%%%%%%%%%%%%%%%%%%%%%%%%%%%%%%%%%%%%%%%%%%%%%%
\bibliographystyle{plain}
\bibliography{references.bib}
%%%%%%%%%%%%%%%%%%%%%%%%%%%%%%%%%%%%%%%%%%%%%%%%%%%%%%%%%%%%%%%%%%%

\end{document}